\DeclareMathAlphabet{\mathcal}{OMS}{cmsy}{m}{n}
\def\d{\mathsf{d}}
\def\gg{\mathfrak{g}}
\def\RR{\mathbb{R}}
\def\ZZ{\mathbb{Z}}
\def\restrict#1{\,\vrule height1.1ex width.4pt
               depth1.2ex\lower1.0ex\hbox{\scriptsize $\:#1$}}
\newcommand{\operp}{\bigcirc\kern-6.5pt\perp}
\newcommand{\im}{\mathop{\mathrm{im}}\nolimits}
\newcommand{\rank}{\mathop\mathrm{rank}\nolimits}
\makeatletter \@addtoreset{equation}{section}
\newtheorem{theorem}{Theorem}[section]
\newtheorem{lemma}[theorem]{Lemma}
\newtheorem{proposition}[theorem]{Proposition}
\newtheorem{corollary}[theorem]{Corollary}
\newtheorem{definition}[theorem]{Definition}
\newtheorem{remark}[theorem]{Remark}
\newtheorem{remarks}[theorem]{Remarks}
\newenvironment{proof}%
        {\addvspace\baselineskip\noindent {\sc Proof:}\quad}%
        {\hfill \ding{114} \par\addvspace\baselineskip}
        {\addvspace\baselineskip\noindent {\sc Proof of #1:} \quad}%
        {\hfill \ding{144} \par\addvspace\baselineskip}
\begin{document}

\begin{frontmatter}


\title{Witten-Hodge theory for manifolds with boundary and equivariant cohomology}

\author{Qusay S.A.~Al-Zamil}
\ead{Qusay.Abdul-Aziz@postgrad.manchester.ac.uk}
\author{James Montaldi}
\ead{j.montaldi@manchester.ac.uk}

\address{School of Mathematics, University of Manchester, Manchester M13 9PL, England}


\begin{abstract}
We consider a compact, oriented, smooth Riemannian
manifold $M$ (with or without boundary) and we suppose $G$ is a
torus acting by isometries on $M$. Given $X$ in the Lie algebra and
corresponding vector field $X_M$ on $M$, one defines Witten's
inhomogeneous coboundary operator $\d_{X_M} = \d+\iota_{X_M}:
\Omega_G^\pm \to\Omega_G^\mp$ (even/odd invariant forms on $M$) and
its adjoint $\delta_{X_M}$. Witten \cite{Witten} showed that the
resulting cohomology classes have $X_M$-harmonic representatives
(forms in the null space of $\Delta_{X_M} =
(\d_{X_M}+\delta_{X_M})^2$), and the cohomology groups are
isomorphic to the ordinary de Rham cohomology groups of the set
$N(X_M)$ of zeros of $X_M$. Our principal purpose is to extend these
results to manifolds with boundary.  In particular, we define
relative (to the boundary) and absolute versions of the
$X_M$-cohomology and show the classes have representative
$X_M$-harmonic fields with appropriate boundary conditions. To do
this we present the relevant version of the Hodge-Morrey-Friedrichs
decomposition theorem for invariant forms in terms of the operators
$\d_{X_M}$ and $\delta_{X_M}$. We also elucidate the connection
between the $X_M$-cohomology groups and the relative and absolute
equivariant cohomology, following work of Atiyah and Bott. This
connection is then exploited to show that every harmonic field with
appropriate boundary conditions on $N(X_M)$ has a unique
$X_M$-harmonic field on $M$, with corresponding boundary conditions.
Finally, we define the $X_M$-Poincar\'{e} duality angles
between the interior subspaces of $X_M$-harmonic fields on $M$ with
appropriate boundary conditions, following recent work of DeTurck and Gluck.

\begin{keyword}
Hodge theory\sep   manifolds with boundary\sep  equivariant cohomology\sep  Killing vector fields
\MSC[2010]{58J32\sep 57R91\sep 55N91}
\end{keyword}

\end{abstract}

\end{frontmatter}

\section{Introduction}
Throughout we assume $M$ to be a compact oriented smooth Riemannian manifold of dimension $n$, with or without boundary.  For each $k$ we denote by $\Omega^k = \Omega^k(M)$ the space of smooth differential $k$-forms on $M$. The de Rham cohomology of $M$ is defined to be $H^k(M) = \ker\d_k/\im\d_{k-1}$, where $\d_k$ is the restriction of the exterior differential $\d$ to $\Omega^k$.  In other words it is the cohomology of the de Rham complex $(\Omega^*,\d)$. If $M$ has a boundary, then the relative de Rham cohomology $H^k(M,\,\partial M)$   is defined to be the cohomology of the subcomplex $(\Omega^*_D,\d)$ where $\Omega^k_D$ is the space of \emph{Dirichlet} $k$-forms---those satisfying $i^*\omega=0$ where $i:\partial M\ \hookrightarrow M$ is the inclusion of the boundary.

\paragraph{Classical Hodge theory}
Based on the Riemannian structure, there is a natural inner product on each
$\Omega^k$ defined by
\begin{equation}\label{eq:inner product}
\left<\alpha,\,\beta\right> = \int_M\alpha\wedge(\star\beta),
\end{equation}
where $\star:\Omega^k\to\Omega^{n-k}$ is the Hodge star operator \cite{Marsden,Schwarz}. One defines $\delta:\Omega^k\to\Omega^{k-1}$ by
\begin{equation}\label{eq:delta}
\delta\omega = (-1)^{n(k+1)+1} (\star \d\star)\omega.
\end{equation}
If $M$ is boundaryless, this is seen to be the formal adjoint of $\d$ relative to the inner product (\ref{eq:inner product}): $\left<\d\alpha,\,\beta\right> =
\left<\alpha,\,\delta\beta\right>$. The Hodge Laplacian is defined
by $\Delta = (\d+\delta)^2 = \d\delta+\delta \d$, and a form
$\omega$ is said to be \emph{harmonic} if $\Delta\omega=0$.

In the 1930s, Hodge \cite{Hodge} proved the fundamental result that (for $M$ without boundary)
each cohomology class contains a unique harmonic form. A more
precise statement is that, for each $k$,
\begin{equation}\label{eq:Hodge}
\Omega^k(M) = \mathcal{H}^k\oplus \d\Omega^{k-1} \oplus
\delta\Omega^{k+1}.
\end{equation}
The direct sums are orthogonal with respect to the inner product (\ref{eq:inner product}), and the direct sum of the first two subspaces
is equal to the subspace of all closed $k$-forms (that is, $\ker\d_k$). It follows that the Hodge star operator realizes Poincar\'e duality at the level of harmonic forms.

Furthermore, any harmonic form $\omega\in \ker \Delta$ is both closed ($\d\omega=0$) and co-closed ($\delta\omega=0$),  as
\begin{equation}\label{eq:ker=ker}
0=\left<\Delta\omega,\,\omega\right> =
\left<\d\delta\omega,\,\omega\right> +
\left<\delta\d\omega,\,\omega\right> =
\left<\delta\omega,\,\delta\omega\right> +
\left<\d\omega,\,\d\omega\right> = \|\delta\omega\|^2 +
\|\d\omega\|^2.
\end{equation}
For manifolds with boundary this is no longer true, and in general we write
$$\mathcal{H}^k = \mathcal{H}^k(M)=\ker \d \cap\ker\delta.$$
Thus for manifolds without boundary $\mathcal{H}(M)=\ker\Delta$, the space of harmonic forms.

\begin{remark}\label{rmk:harmonic forms are invariant}
An interesting observation which follows from the theorem of Hodge
is the following. If a group $G$ acts on $M$ then there  is an
induced action on each $H^k(M)$, and if this action is trivial (for
example, if $G$ is a connected Lie group) and the action is by
isometries, then each harmonic form is invariant under this action.
\end{remark}

\paragraph{Witten's deformation of  Hodge theory}
Now suppose $K$ is a Killing vector field on $M$ (meaning that
the Lie derivative of the metric vanishes).  Witten \cite{Witten}
defines, for each $s\in\RR$, an operator on differential forms by
$$\d_s := \d + s\, \iota_K\,,$$
where $\iota_K$ is interior multiplication of a form with $K$. This operator is no longer homogeneous in the degree of the form: if $\omega\in\Omega^k(M)$ then $\d_s\omega \in \Omega^{k+1}\oplus\Omega^{k-1}$. Note then that $\d_s:\Omega^{\pm}\to\Omega^{\mp}$, where $\Omega^\pm$ is the space of forms of even ($+$) or odd ($-$) degree. Let us write $\delta_s=\d_s^*$ for the formal adjoint of $\d_s$ (so given by $\delta_s = \delta + s(-1)^{n(k+1)+1} (\star\,\iota_K\star)$ on each homogeneous form of degree $k$).  By Cartan's formula, $\d_s^2 = s\mathcal{L}_K$ (the Lie derivative along $sK$). On the space $\Omega_s^\pm = \Omega^\pm \cap \ker\mathcal{L}_{K}$ of invariant forms, $\d_s^2=0$ so one can define two cohomology groups $H_s^\pm := \ker\d_s^\pm/\im\d_s^\mp$. Witten then defines
$$\Delta_s: = (\d_s+\delta_s)^2:\Omega_{s}^\pm(M)\to\Omega_{s}^\pm(M),$$
(which he denotes $H_s$ as it represents a Hamiltonian operator, but for us this would cause confusion),  and he observes that using standard Hodge theory arguments, there is an isomorphism
\begin{equation}\label{eq:Witten1}
\mathcal{H}_s^\pm := (\ker\Delta_s)^\pm \cong H_s^\pm(M),
\end{equation}
although no details of the proof are given (the interested reader can find details in \cite{my thesis}).  Witten also
shows, among other things, that for $s\neq 0$, the dimensions of
$\mathcal{H}_s^\pm$ are respectively equal to the total even and odd
Betti numbers of the subset $N$ of zeros of $K$, which in
particular implies the finiteness of $\dim\mathcal{H}_s$.
Atiyah and Bott \cite{AB} relate this result of Witten's to
their localization theorem in equivariant cohomology.

\medskip

It is well-known that the group of isometries of a Riemannian manifold (with or without boundary) is compact, so that a Killing vector field generates an action of a torus.  In this light, and because of Remark\,\ref{rmk:harmonic forms are invariant} (and its extension to Witten's setting), Witten's analysis can be cast in the following slightly more general context.

Throughout, we let $G$ be a torus acting by isometries on $M$, with Lie algebra $\gg$, and denote by $\Omega_G=\Omega_G(M)$ the space of smooth $G$-invariant forms on $M$.
Given any $X\in\gg$ we denote the corresponding vector field on $M$
by $X_M$, and following Witten we define $\d_{X_M} = \d +
\iota_{X_M}.$ Then $\d_{X_M}$ defines an operator
$\d_{X_M}:\Omega_G^\pm \to \Omega_G^\mp$, with $\d_{X_M}^2=0$.  For
each $X\in\gg$ there are therefore two corresponding cohomology
groups $H_{X_M}^\pm (M)= \ker\d_{X_M}^\pm/\im\d_{X_M}^\mp$, which we
call $X_M$-cohomology groups, and a corresponding operator we call
the \emph{Witten-Hodge-Laplacian}
$$\Delta_{X_M}=(\d_{X_M}+\delta_{X_M})^2 : \Omega_G^\pm \to \Omega_G^\pm.$$
According to Witten there is an isomorphism $\mathcal{H}_{X_M}^\pm
\cong H_{X_M}^\pm(M)$, where $\mathcal{H}^{\pm}_{X_M}$ is the space
of $X_M$-harmonic forms, that is those forms annihilated by
$\Delta_{X_M}$\,. Of course, Witten's presentation is no less
general than this, and is obtained by putting $X_M=sK$; the only
difference is we are thinking of $X$ as a variable element of $\gg$,
while for Witten varying $s$ only gives a 1-dimensional subspace of
$\gg$ (although one may change $K$ as well).

\medskip

The immediate purpose of this paper is to extend Witten's results to
manifolds with boundary. In order to do this, in Section
\ref{sec:W-H no boundary} we outline the background to Witten's results using classical Hodge theory arguments, which in Section
\ref{sec:W-H with boundary} we extend to deal with the case of
manifolds with boundary. In section \ref{sec:equivariant cohomology},
we describe Atiyah and Bott's localization and its conclusions in
the case of manifolds with boundary, and its relation to
$X_M$-cohomology.  Finally in Section \ref{sec.style of
DeTurck-Gluck}, we extend our results to adapt ideas of  DeTurck and Gluck
\cite{Gluck} and the \emph{Poincar\'{e} duality angles}. Section
\ref{sec:conclusions} provides a few conclusions.

The original motivation for this paper was to adapt to the equivariant setting some recent work of Belishev and Sharafutdinov \cite{Belishev} where they address the classical question,  \emph{ ``To what extent are the topology and geometry of a manifold determined by the Dirichlet-to-Neumann (DN) map''?}
which arises in the scope of inverse problems and reconstructing a
manifold from boundary measurements.  They show that the DN map on the boundary of a Riemannian manifold determines the Betti numbers of the manifold. This paper provides the background necessary for the ``equivariant'' analogue \cite{Q&J} of the results of Belishev and Sharafudtinov.

\medskip

\paragraph{Hodge theory for manifolds with boundary}
In the remainder of this introduction we recall the standard extension of Hodge theory to manifolds with boundary, leading to the Hodge-Morrey-Friedrichs decompositions; details can be found in the book of Schwarz \cite{Schwarz}. The relative de Rham cohomology and the Dirichlet forms are defined at the beginning of the introduction.
One also defines $\Omega^k_N(M) = \left\{\alpha\in\Omega^k(M)\mid i^*(\star\alpha)=0\right\}$ (Neumann boundary condition).
Clearly, the Hodge star provides an isomorphism
$\star:\Omega_D^k\stackrel{\sim}{\longrightarrow}\Omega_N^{n-k}.$
Furthermore, because $\d$ and $i^*$ commute, it follows that $\d$ preserves Dirichlet boundary conditions while $\delta$ preserves Neumann boundary conditions.

As alluded to before, because of boundary terms, the null space of $\Delta$ no longer coincides with the closed and co-closed forms. Elements of $\ker\Delta$ are called \emph{harmonic forms}, while $\omega$ satisfying $\d\omega=\delta\omega=0$ are called \emph{harmonic fields} (following Kodaira); it is clear that every harmonic field is a harmonic form, but the converse is false.
In fact, the space $\mathcal{H}^k(M)$ of harmonic fields is infinite dimensional and so is much too big to represent the cohomology, and to recover the Hodge isomorphism one has to impose boundary conditions.  One restricts $\mathcal{H}^k(M)$ into each of two finite dimensional subspaces, namely $\mathcal{H}_D^k(M)$ and $\mathcal{H}_N^k(M)$ with the obvious meanings (Dirichlet and Neumann harmonic $k$-fields, respectively).  There are therefore two different candidates for harmonic representatives when the boundary is present.

The Hodge-Morrey decomposition \cite{Morrey} states that
$$ \Omega^k(M) = \mathcal{H}^k(M) \oplus \d\Omega_D^{k-1} \oplus \delta\Omega_N^{k+1}.$$
(We make a more precise functional analytic statement below.)
This decomposition is again orthogonal with respect to the inner
product given above. Friedrichs \cite{Friedrichs} subsequently
showed that
$$\mathcal{H}^k = \mathcal{H}^k_D\oplus \mathcal{H}^k_{\mathrm{co}};\qquad \mathcal{H}^k = \mathcal{H}^k_N\oplus \mathcal{H}^k_{\mathrm{ex}}$$
where $\mathcal{H}^k_{\mathrm{ex}}$ are the exact harmonic fields
and $\mathcal{H}^k_{\mathrm{co}}$ the coexact ones (that is,  $\mathcal{H}^k_{\mathrm{ex}} = \mathcal{H}^k\cap\d\Omega^{k-1}$ and $\mathcal{H}^k_{\mathrm{co}} = \mathcal{H}^k\cap\delta\Omega^{k+1}$). These give the
orthogonal \emph{Hodge-Morrey-Friedrichs} \cite{Schwarz}
decompositions,
\begin{eqnarray*}
\Omega^k(M) &=& \d\Omega_D^{k-1} \oplus \delta\Omega_N^{k+1} \oplus \mathcal{H}^k_D\oplus \mathcal{H}^k_{\mathrm{co}}\\
&=& \d\Omega_D^{k-1} \oplus \delta\Omega_N^{k+1} \oplus
\mathcal{H}^k_N\oplus \mathcal{H}^k_{\mathrm{ex}}.
\end{eqnarray*}
The two decompositions are related by the Hodge star operator. The
consequence for cohomology is that each class in $H^k(M)$ is
represented by a unique harmonic field in $\mathcal{H}^k_N(M)$, and
each relative class in $H^k(M,\partial M)$ is represented by a
unique harmonic field in  $\mathcal{H}^k_D(M)$. Again, the Hodge
star operator acts as Poincar\'e duality (or rather Poincar\'e-Lefschetz duality) on the harmonic fields, sending Dirichlet fields to Neumann fields. And as in remark~\ref{rmk:harmonic forms are invariant}, if a group acts by isometries on $(M,\partial M)$ in a manner that is trivial on the cohomology, then the harmonic fields are invariant.

\medskip

In this paper, we suppose $G$ is a compact connected Abelian Lie group (a torus) acting by isometries on $M$, with Lie algebra $\mathfrak{g}$, and we let $X\in\mathfrak{g}$. If $M$ has a boundary then the $G$-action necessarily restricts to an action on the boundary and $X_M$ must therefore be tangent to the boundary.  We denote by $\Omega_G=\Omega_G(M)$ the set of invariant forms on $M$: $\omega\in\Omega_G$ if $g^*\omega=\omega$ for all $g\in G$; in particular if $\omega$ is invariant then the Lie derivative $\mathcal{L}_{X_M}\omega=0$.
Note that because the action preserves the metric and the
orientation it follows that, for each $g\in G$, $\star(g^*\omega) =
g^*(\star\omega)$, so if $\omega\in\Omega_G$ then
$\star\omega\in\Omega_G$.

\medskip
Remark on typesetting: Since the letter H plays three roles in this paper, we use three different typefaces: a script $\mathcal{H}$ for harmonic fields, a sans-serif $\mathsf{H}$ for Sobolev spaces and a normal (italic) $H$ for cohomology. We hope that will prevent any confusion.

\section{Witten-Hodge theory for manifolds without boundary}
\label{sec:W-H no boundary}

In this section we summarize the functional analysis behind Witten's results \cite{Witten},  details can be found in the first author's thesis \cite{my thesis}.
These are  needed in the next section for manifolds with boundary.  We continue to use the notation from the introduction, notably the manifold $M$ (which in this section has no boundary) and the torus $G$.

Fix an element $X\in\gg$. The associated vector field on $M$ is $X_M$, and using this one defines Witten's inhomogeneous operator $\d_{X_M}: \Omega_G^\pm \to
\Omega_G^\mp,\; \d_{X_M}\omega=\d\omega+\iota_{X_M}\omega$, and the corresponding operator (cf.\ eq.~(\ref{eq:delta}))
$$\delta_{X_M} = (-1)^{n(k+1)+1}\star\d_{X_M}\star = \delta + (-1)^{n(k+1)+1}\star\iota_{X_M}\star$$
(which is the operator adjoint to $\d_{X_M}$ by eq. (\ref{eq.2.16})
below). The resulting \emph{Witten-Hodge-Laplacian} is
$\Delta_{X_M}:\Omega_G^\pm \to \Omega_G^\pm$ defined by
$\Delta_{X_M}=(\d_{X_M}+\delta_{X_M})^2 = \d_{X_M}\delta_{X_M} +
\delta_{X_M}\d_{X_M}$. We write the space of $X_M$-harmonic fields
$$\mathcal{H}_{X_M} = \ker\d_{X_M}\cap\ker\delta_{X_M}\,,$$
which for manifolds without boundary satisfies  $\mathcal{H}_{X_M} =\ker\Delta_{X_M}$. The last equality follows for the same reason as for ordinary Hodge theory, namely the argument in (\ref{eq:ker=ker}), with $\Delta$
replaced by $\Delta_{X_M}$ etc.

As is conventional, define  $\int_M\omega=0$ if  $\omega\in\Omega^k(M)$ with $k\neq
n$. So, for any form $\omega\in\Omega(M)$ one has $\int_M
\iota_{X_M}\omega=0$ as $\iota_{X_M}\omega$ has no term of degree
$n$, and the following equation (\ref{eq.Stokes' theorem}) follows
from the
ordinary Stokes' theorem. For future use, we allow $M$ to have a
boundary.
\begin{equation}\label{eq.Stokes' theorem}
    \int_M \d_{X_{M}}\omega= \int_{\partial M} i^{*}\omega.
\end{equation}

For each space $\Omega$ of smooth differential forms on $M$, and
each $s\in\RR$, we write $\mathsf{H^{s}}\Omega$ for the completion
of $\Omega$ under an appropriate Sobolev norm. It is not hard to
prove a Green's formula in terms of $\d_{X_{M}}$ and $\delta_{X_{M}}$
which states that for $\alpha ,\beta \in
\mathsf{H^{1}}\Omega_{G}$,
\begin{equation}\label{eq.2.16}
\langle \d_{X_{M}}\alpha,\beta\rangle=\langle
\alpha,\delta_{X_{M}}\beta\rangle+\int_{\partial {M}} i^{*} (\alpha
\wedge \star\beta)\,,
\end{equation}

Returning now to the case of a manifold without boundary, we obtain the following.

\begin{theorem}\label{thm:self-adjoint elliptic no boundary}
\begin{enumerate}
\item The Witten-Hodge-Laplacian $\Delta_{X_M}$ is a self-adjoint elliptic
operator.\\
\item The following is an orthogonal decomposition
$$\Omega_{G}^\pm = \mathcal{H}_{X_M}^\pm \oplus \d_{X_M}\Omega_G^\mp \oplus \delta_{X_M}\Omega_G^\mp.$$
The orthogonality is with respect to the  $L^2$ inner product.
\end{enumerate}
\end{theorem}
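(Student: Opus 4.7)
The plan is to transplant the classical proof of the Hodge decomposition to the Witten setting, exploiting the fact that $\d_{X_M}=\d+\iota_{X_M}$ differs from $\d$ only by the zeroth-order operator $\iota_{X_M}$, so all principal-symbol and elliptic-regularity arguments carry over verbatim.

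For part (1), \emph{self-adjointness} of $\Delta_{X_M}$ is immediate once one observes that, since $\partial M=\emptyset$, the boundary integral in the Green's formula~(\ref{eq.2.16}) vanishes and hence $\langle\d_{X_M}\alpha,\beta\rangle=\langle\alpha,\delta_{X_M}\beta\rangle$ for all $\alpha,\beta\in\Omega_G$. Thus $\delta_{X_M}$ is the formal adjoint of $\d_{X_M}$ and so $(\d_{X_M}+\delta_{X_M})$, and its square, are formally self-adjoint. For \emph{ellipticity}, one notes that the principal symbols of $\d_{X_M}$ and of $\d$ agree (and similarly for $\delta_{X_M}$ and $\delta$), so the principal symbol of $\Delta_{X_M}$ coincides with that of the ordinary Hodge Laplacian, which is well-known to be elliptic.

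For part (2), I would first verify that on a closed manifold $\mathcal{H}_{X_M}^\pm=\ker\Delta_{X_M}$, which follows by exactly repeating the calculation~(\ref{eq:ker=ker}) with $\Delta$ replaced by $\Delta_{X_M}$, using the formal adjointness just established. Next, orthogonality of the three summands is an algebraic check: $\langle\d_{X_M}\alpha,\delta_{X_M}\beta\rangle=\langle\d_{X_M}^2\alpha,\beta\rangle=0$ since $\d_{X_M}^2=\mathcal{L}_{X_M}$ vanishes on invariant forms; and for any $h\in\mathcal{H}_{X_M}^\pm$ and $\alpha\in\Omega_G^\mp$ one has $\langle h,\d_{X_M}\alpha\rangle=\langle\delta_{X_M}h,\alpha\rangle=0$, and similarly $\langle h,\delta_{X_M}\alpha\rangle=0$. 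The closedness of $\d_{X_M}\Omega_G^\mp$ and $\delta_{X_M}\Omega_G^\mp$ in the $L^2$-topology, together with the fact that $\im\Delta_{X_M}\subseteq\d_{X_M}\Omega_G^\mp+\delta_{X_M}\Omega_G^\mp$, will combine with the Fredholm alternative $\Omega_G^\pm=\ker\Delta_{X_M}\oplus\im\Delta_{X_M}$ (valid for any self-adjoint elliptic operator on a compact manifold) to yield the claimed decomposition.

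The main obstacle is therefore not algebraic but analytic: one must justify the functional-analytic Hodge-style decomposition in the $G$-invariant, $\mathbb{Z}/2$-graded setting. Two points need care. First, one should verify that $\d_{X_M}$ and $\delta_{X_M}$ genuinely preserve $\Omega_G^\pm$; this uses that $G$ acts by isometries (so $\star$ and $\d$ commute with the $G$-action) and that $X_M$ is $G$-invariant because $G$ is abelian. Second, one must promote the smooth decomposition from the usual Sobolev-completion argument: extend $\Delta_{X_M}$ to a Fredholm operator on $\mathsf{H}^s\Omega_G^\pm$, obtain the $L^2$-orthogonal splitting $\Omega_G^\pm=\ker\Delta_{X_M}\oplus\im\Delta_{X_M}$ at the Sobolev level, then invoke elliptic regularity to conclude that the kernel consists of smooth forms and that every smooth form in $\im\Delta_{X_M}$ has a smooth $\Delta_{X_M}$-preimage, so the decomposition descends to $\Omega_G^\pm$ itself. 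Everything beyond these two points is a direct translation of the classical arguments summarized around~(\ref{eq:Hodge}).
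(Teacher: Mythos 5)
Your proposal is correct and follows essentially the same route as the paper, which itself only sketches the argument (self-adjointness from Green's formula with no boundary term, ellipticity from the principal symbol agreeing with that of $\Delta$, and the decomposition as a standard consequence of elliptic Fredholm theory), deferring the functional-analytic details to the first author's thesis. The points you flag for care — invariance of $\Omega_G^\pm$ under $\d_{X_M}$ and $\delta_{X_M}$, and elliptic regularity to descend from the Sobolev splitting to smooth forms — are exactly the ones the paper's cited references handle.
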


Part (2) is the analogue of the Hodge decomposition theorem, and is a standard consequence of the fact that $\Delta_{X_M}$ is self-adjoint.  The first two summands give the $X_M$-closed forms.

Every elliptic operator on a compact manifold is a Fredholm
operator, so has finite dimensional kernel and cokernel, and closed
range. Therefore the set of $X_M$-harmonic (even/odd) forms
$\mathcal{H}_{X_M}^\pm=(\ker \Delta_{X_M})^\pm$ is finite
dimensional. One concludes with the analogue of Hodge's theorem

\begin{proposition}\label{unique $X_M$-harmonic representative}  $H^{\pm}_{X_M}(M)\cong \mathcal{H}_{X_M}^\pm$, and in particular every $X_M$-cohomology class has a unique $X_M$-harmonic representative.
\end{proposition}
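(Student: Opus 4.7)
The plan is to prove the proposition as a direct consequence of the orthogonal decomposition in Theorem~\ref{thm:self-adjoint elliptic no boundary}(2), mimicking the classical Hodge argument. The natural candidate map is
$$\Phi:\mathcal{H}_{X_M}^{\pm}\longrightarrow H_{X_M}^{\pm}(M),\qquad \omega\longmapsto [\omega],$$
which is well-defined because any $\omega\in\mathcal{H}_{X_M}^{\pm}=\ker\d_{X_M}\cap\ker\delta_{X_M}$ is in particular $\d_{X_M}$-closed. The task is to show $\Phi$ is both injective and surjective.

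For surjectivity, I would take an $X_M$-closed form $\alpha\in\Omega_G^{\pm}$ and apply Theorem~\ref{thm:self-adjoint elliptic no boundary}(2) to write
$$\alpha \;=\; h \;+\; \d_{X_M}\beta \;+\; \delta_{X_M}\gamma$$
uniquely with $h\in\mathcal{H}_{X_M}^{\pm}$, $\beta,\gamma\in\Omega_G^{\mp}$. Applying $\d_{X_M}$ and using $\d_{X_M}^2=0$ on invariant forms together with $\d_{X_M}h=0$ gives $\d_{X_M}\delta_{X_M}\gamma=0$. Pairing with $\gamma$ and using the boundaryless version of the Green's formula (\ref{eq.2.16}) yields
$$0 \;=\; \langle \d_{X_M}\delta_{X_M}\gamma,\gamma\rangle \;=\; \langle \delta_{X_M}\gamma,\delta_{X_M}\gamma\rangle \;=\; \|\delta_{X_M}\gamma\|^{2},$$
so $\delta_{X_M}\gamma=0$ and hence $\alpha = h + \d_{X_M}\beta$ represents the same class as $h$. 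Equivalently, one can simply invoke the remark following the decomposition that $\ker\d_{X_M}^{\pm}=\mathcal{H}_{X_M}^{\pm}\oplus\d_{X_M}\Omega_G^{\mp}$, which is exactly the statement that the $\delta_{X_M}$-coexact component of a closed form vanishes.

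For injectivity, if $h\in\mathcal{H}_{X_M}^{\pm}$ and $[h]=0$, then $h=\d_{X_M}\beta$ for some $\beta\in\Omega_G^{\mp}$. Since the three summands in Theorem~\ref{thm:self-adjoint elliptic no boundary}(2) are mutually $L^{2}$-orthogonal, $h\in\mathcal{H}_{X_M}^{\pm}$ is orthogonal to $\d_{X_M}\Omega_G^{\mp}$, so
$$\|h\|^{2} \;=\; \langle h,\d_{X_M}\beta\rangle \;=\; 0,$$
forcing $h=0$. The combination of injectivity and surjectivity gives the desired isomorphism and, via $\Phi^{-1}$, a unique $X_M$-harmonic representative in each class.

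Once the decomposition theorem is in hand, there is no real obstacle: the whole argument is formal manipulation using only $\d_{X_M}^{2}=0$ on $\Omega_G$, the adjointness of $\d_{X_M}$ and $\delta_{X_M}$ (boundary term in (\ref{eq.2.16}) being absent here), and orthogonality of the three summands. The substantive analytic content—ellipticity and self-adjointness of $\Delta_{X_M}$ producing the orthogonal decomposition and the finite-dimensionality of $\mathcal{H}_{X_M}^{\pm}$—has already been absorbed into Theorem~\ref{thm:self-adjoint elliptic no boundary}, so the proposition itself reduces to a short algebraic argument.
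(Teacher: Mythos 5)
Your proof is correct and follows essentially the same route as the paper, which derives the proposition directly from the orthogonal decomposition $\Omega_G^\pm = \mathcal{H}_{X_M}^\pm\oplus\d_{X_M}\Omega_G^\mp\oplus\delta_{X_M}\Omega_G^\mp$ of Theorem~\ref{thm:self-adjoint elliptic no boundary}(2), noting that the first two summands exhaust the $X_M$-closed forms. You have simply written out the standard details (vanishing of the coexact component of a closed form via Green's formula, and injectivity via orthogonality) that the paper omits and defers to \cite{my thesis}.
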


The Hodge star operator gives a form of Poincar\'{e} duality in terms of
$X_M$-cohomology:
$$ H_{X_M}^{n-\pm}(M)\cong H_{X_M}^\pm(M).$$
Since Hodge star takes harmonic forms to harmonic forms, this Poincar\'e duality is realized at the level of harmonic forms. The full details are given in \cite{my thesis}. Here and elsewhere we write $n-\pm$ for the parity
(modulo 2) resulting from subtracting an even/odd number from $n$.

Let $N(X_M)$ be the set of zeros of $X_M$, and $j:N(X_M)\hookrightarrow M$ the inclusion. As observed by Witten, on $N(X_M)$ one has $X_M=0$, so that $j^*\d_{X_M}\omega = \d(j^*\omega)$, and in particular if $\omega$ is $X_M$-closed then
its pullback to $N(X_M)$ is closed in the usual (de Rham) sense. And
exact forms pull back to exact forms. Consequently, pullback defines
a natural map $H_{X_M}^\pm(M)\to H^\pm(N(X_M))$, where
$H^\pm(N(X_M))$ is the direct sum of the even/odd de Rham cohomology groups
of $N(X_M)$.

\begin{theorem}[Witten \cite{Witten}] \label{thm:fixed point set} The pullback to $N(X_M)$ induces an isomorphism between the $X_M$-cohomology groups $H_{X_M}^\pm(M)$ and the cohomology groups $H^\pm(N(X_M))$.
\end{theorem}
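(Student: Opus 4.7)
The plan is to localize $X_M$-cohomology to the zero set $N(X_M)$ by combining a Cartan-style acyclicity argument on the complement with a deformation-retract argument on an invariant tubular neighborhood. Since pullback by $j$ is already known (by the discussion preceding the theorem) to define a chain map $j^*: (\Omega^\pm_G(M), \d_{X_M}) \to (\Omega^\pm(N(X_M)), \d)$, only the bijectivity on cohomology has to be established.

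For acyclicity on the open set $M':= M \setminus N(X_M)$, where $X_M$ is nowhere zero, I would use the invariant Riemannian metric $g$ to form the invariant $1$-form $\theta := g(X_M, \cdot)/g(X_M,X_M)$, smooth on $M'$ and satisfying $\iota_{X_M}\theta = 1$. Setting $\alpha := \d_{X_M}\theta = 1 + \d\theta$, invariance of $\theta$ gives $\d_{X_M}\alpha = \mathcal{L}_{X_M}\theta = 0$, while the nilpotence of $\d\theta$ in the exterior algebra makes $\alpha$ invertible, with $\alpha^{-1} = \sum_{k\ge 0}(-\d\theta)^k$ (a finite sum). Differentiating $\alpha\alpha^{-1}=1$ yields $\d_{X_M}\alpha^{-1}=0$, and a short calculation gives that $K\omega := \theta\wedge \alpha^{-1}\omega$ is a chain homotopy: $\d_{X_M}K + K\d_{X_M} = \mathrm{id}$ on $\Omega_G^\pm(M')$. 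Hence $H^\pm_{X_M}(M')=0$, and the same $K$ gives $H^\pm_{X_M}(U \cap M')=0$ for any invariant open $U\supset N(X_M)$.

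To analyze a neighborhood of $N(X_M)$, I would take a $G$-invariant open tubular neighborhood $U$ (existing by the equivariant tubular neighborhood theorem), together with a $G$-equivariant smooth deformation retraction $r_t: U\to U$ with $r_0 = \mathrm{id}_U$ and $r_1 = j\circ r$. Because $r_t$ is $G$-equivariant, its (invariant) generating vector field $V_t$ satisfies the anticommutation $\iota_{X_M}\iota_{V_t} + \iota_{V_t}\iota_{X_M}=0$, which forces $[\d_{X_M},\iota_{V_t}] = \mathcal{L}_{V_t}$; moreover $r_t^*$ commutes with $\d_{X_M}$. The usual Cartan differentiation-under-the-integral then produces $H = \int_0^1 r_t^*\iota_{V_t}\,dt$ satisfying $\d_{X_M}H + H\d_{X_M} = \mathrm{id}-(j\circ r)^*$, so $j^*:H^\pm_{X_M}(U)\to H^\pm(N(X_M))$ is an isomorphism with inverse $r^*$. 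A Mayer-Vietoris argument for invariant $X_M$-cohomology, applied to the cover $\{U, M'\}$ via an invariant partition of unity, combined with $H^\pm_{X_M}(M')=H^\pm_{X_M}(U\cap M')=0$, yields $H^\pm_{X_M}(M)\cong H^\pm_{X_M}(U)$; composing with the previous isomorphism proves the theorem.

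The main obstacle is the bookkeeping in the equivariant inhomogeneous setting of the last step: one must ensure the generating field $V_t$ is genuinely $G$-invariant so that $r_t^*$ commutes with $\iota_{X_M}$ (hence with $\d_{X_M}$), and establish the short exact sequence $0\to\Omega^\pm_G(M)\to\Omega^\pm_G(U)\oplus\Omega^\pm_G(M')\to\Omega^\pm_G(U\cap M')\to 0$ at the level of invariant forms, together with the induced long exact sequence. These are essentially equivariant upgrades of classical arguments, but they are where the care is needed. An alternative route, which the paper's emphasis on equivariant cohomology (Section \ref{sec:equivariant cohomology}) suggests may be preferred, is to identify $H^\pm_{X_M}(M)$ with the value at $X$ of equivariant cohomology and appeal to the Atiyah-Bott localization theorem; this shifts the difficulty but is morally equivalent.
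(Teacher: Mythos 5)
Your argument is correct, but it is not the route the paper takes: the paper offers no direct proof of Theorem~\ref{thm:fixed point set} in Section~\ref{sec:W-H no boundary} (it is attributed to Witten's explicit extension of closed forms from $N(X_M)$ to $M$), and the proof it actually supplies is the equivariant-cohomology one of Section~\ref{sec:equivariant cohomology}: identify $H^\pm_{X_M}$ with the Cartan complex specialized at $u=X$, invoke the Atiyah--Bott localization theorem (Theorem~\ref{localization them.}) to get $H_G^\pm(M)_f\cong H^\pm(F)\otimes R_f$, and reduce modulo the maximal ideal $\mathfrak{m}_X$ via Lemma~\ref{lemma 5.6}, with a separate step for $X\in Z$ (Corollary~\ref{coroll:Witten-fixed isomorphism}). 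Your proof instead unwinds that localization at the chain level: the invariant $1$-form $\theta$ with $\iota_{X_M}\theta=1$ and the invertible even form $\alpha=1+\d\theta$ give an explicit contracting homotopy $K=\theta\wedge\alpha^{-1}\wedge(\cdot)$ off the zero set, and the equivariant tubular neighborhood plus Mayer--Vietoris (with an invariant partition of unity) finish the job. The computations check out: $\d_{X_M}$ is an odd derivation, $\d_{X_M}\alpha=\mathcal{L}_{X_M}\theta=0$, and $G(X)$-equivariance of $r$ forces $Dr\cdot X_M=0$ so that $r^*$ really is a chain map into $\ker\iota_{X_M}$ (one small quibble: $\iota_{X_M}\iota_{V_t}+\iota_{V_t}\iota_{X_M}=0$ holds for \emph{any} two vector fields, equivariance is only needed so that $r_t^*$ commutes with $\iota_{X_M}$). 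What your approach buys is self-containedness and uniformity in $X$ --- it sees only $N(X_M)$ and never the full fixed set $F$, so the case $X\in Z$ requires no special treatment, unlike the paper's Corollary~\ref{coroll:Witten-fixed isomorphism}. What it gives up is the $R$-module structure of $H_G^\pm(M)$, which the paper exploits elsewhere (rank computations, Theorem~\ref{relative equivariant}); also note that at the very end you should pass through the standard identification $H^\pm(\Omega_G(N),\d)\cong H^\pm(N)$ by averaging, since $r^*$ should be applied to invariant representatives.
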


Witten gives a fairly explicit proof of this theorem by extending
closed forms on $N(X_M)$ to $X_M$-closed forms on $M$. Atiyah and
Bott \cite{AB} give a proof using their localization theorem in
equivariant cohomology  which we discuss, and adapt to the case of
manifolds with boundary, in Section \ref{sec:equivariant
cohomology}.

\begin{remark}
Extending remark \,\ref{rmk:harmonic forms are invariant}, suppose $X$ generates the torus $G(X)$, and $G$ is a larger torus containing  $G(X)$ and acting on $M$ by isometries. Then the action
of $G$ preserves $X_M$.  It follows that $G$ acts trivially on the
de Rham cohomology of $N(X_M)$, and hence on the $X_M$-cohomology of
$M$, and consequently on the space of $X_M$-harmonic forms. In other
words, $\mathcal{H}_{X_M}^\pm\subset\Omega_{G}^\pm$. There is
therefore no loss in considering just forms invariant under the
action of the larger torus in that the $X_M$-cohomology, or the
space of $X_M$-harmonic forms, is independent of the choice of
torus, provided it contains $G(X)$.
\end{remark}

\section{Witten-Hodge theory for manifolds with boundary}
\label{sec:W-H with boundary}

In this section we extend the results and methods of Hodge theory
for manifolds with boundary to study the $X_M$-cohomology and the
space of $X_M$-harmonic forms and fields for manifolds with
boundary. As for ordinary (singular) cohomology, there are both
absolute and relative $X_M$-cohomology groups.
From now on our manifold will be with boundary and as before
$i:\partial M \hookrightarrow M$ denotes the inclusion of the
boundary, and $G$ is a torus acting by isometries on $M$.

\subsection{The difficulties if the boundary is present}
Firstly, $\d_{X_{M}}$ and $\delta_{X_{M}}$ are no longer adjoint
because the boundary terms arise when we integrate by parts, and then
$\Delta_{X_{M}}$ will not be self-adjoint. In addition, the space of
all harmonic fields is infinite dimensional and there is no reason
to expect the $X_{M}$-harmonic fields $\mathcal{H}_{X_{M}}(M)$ to be
any different. To overcome these problems, at the beginning we
follow the method which is used to solve this problem in the
classical case, i.e.\ with $\d$ and $\delta$, by
imposing certain boundary conditions on our invariant forms
$\Omega_{G}(M)$, as described in  \cite{Schwarz}. Hence we make the following definitions.

\begin{definition}
(1) We define the following two sets of smooth invariant forms on the manifold $M$ with boundary and with action of the torus $G$
\begin{eqnarray}
  \Omega_{G,D} &=& \Omega_{G}\cap \Omega_{D} \ = \{\omega\in\Omega_G\mid i^*\omega=0\} \label{eq.2.12}\\
   \Omega_{G,N} &=& \Omega_{G}\cap \Omega_{N}\ = \{\omega\in\Omega_G\mid i^*(\star\omega)=0\}\label{eq.2.13}
\end{eqnarray}
and the spaces $\mathsf{H}^{s}\Omega_{G,D}$ and $\mathsf{H}^{s}\Omega_{G,N}$ are the corresponding closures with respect to suitable Sobolev norms, for $s>\frac12$. This can be refined to take into account the parity of the forms, so defining $\Omega_{G,D}^\pm$ etc. Since $\omega\in\Omega^k$ implies $\star\omega \in \Omega^{n-k}$ we write that for $\omega\in\Omega_{G}^\pm$ we have $\star\omega \in\Omega_G^{n-\pm}$.

\noindent(2) We define two subspaces of $X_M$-harmonic fields,
\begin{eqnarray}
  \mathcal{H}_{X_{M},D}(M) &=& \{\omega\in \mathsf{H^{1}}\Omega_{G,D}\mid \d_{X_{M}}\omega=0, \,\delta_{X_{M}}\omega=0 \}\label{eq.2.14}\\
  \mathcal{H}_{X_{M},N}(M) &=& \{\omega\in \mathsf{H^{1}}\Omega_{G,N}\mid \d_{X_{M}}\omega=0,\, \delta_{X_{M}}\omega=0 \}\label{eq.2.15}
\end{eqnarray}
which we call Dirichlet and Neumann $X_{M}$-harmonic fields, respectively.  We will show below that these forms are smooth. Clearly, the Hodge star operator $\star$ defines an isomorphism  $\mathcal{H}_{X_{M},D}(M) \cong \mathcal{H}_{X_{M},N}(M)$. Again, these can be refined to take the parity into account, defining $  \mathcal{H}_{X_{M},D}^\pm(M)$ etc.
\end{definition}

As for ordinary Hodge theory, on a manifold with boundary one has to distinguish between $X_{M}$-harmonic \emph{forms} (i.e.\ $\ker\Delta_{X_{M}}$) and $X_{M}$-harmonic \emph{fields} (i.e.\ $\mathcal{H}_{X_{M}}(M)$) because they are not equal: one has $\mathcal{H}_{X_{M}}(M)\subseteq \ker\Delta_{X_{M}}$ but not conversely. The following proposition shows the conditions on $\omega$ to be fulfilled in order to ensure $\omega\in\ker\Delta_{X_{M}} \Longrightarrow \omega\in \mathcal{H}_{X_{M}}(M)$ when $\partial M\neq \emptyset$.

\begin{proposition}\label{pro.2.6}
If $\omega \in \Omega_{G}(M)$ is an $X_{M}$-harmonic form
(i.e.\ $\Delta_{X_{M}}\omega=0$) and in addition any one of the following four pairs of boundary conditions is satisfied then $\omega\in\mathcal{H}_{X_{M}}(M)$.
$$
  \begin{array}{llll}
    (1)& i^{*} \omega = 0 ,\; i^{*} (\star \omega)=0; &
    (2)& i^{*} \omega = 0 ,\; i^{*}(\delta_{X_{M}}\omega)=0; \\[4pt]
    (3)& i^{*} (\star \omega) = 0 ,\; i^{*}(\star \d_{X_{M}}\omega)=0; \quad  &
    (4)& i^{*} (\delta_{X_{M}}\omega) = 0 ,\; i^{*}(\star \d_{X_{M}}\omega)=0.
  \end{array}
$$

\end{proposition}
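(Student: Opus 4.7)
My plan is to mimic the classical Hodge-theoretic argument that uses $\langle \Delta \omega, \omega\rangle = \|\d\omega\|^2 + \|\delta\omega\|^2 + \text{(boundary terms)}$, but with $\d$ and $\delta$ replaced by their Witten deformations and using the Green's formula (\ref{eq.2.16}) in place of the usual integration by parts. The goal is to show that the two resulting boundary integrals both vanish under each of the four hypotheses, which forces $\d_{X_M}\omega = 0 = \delta_{X_M}\omega$.

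Concretely, I would start from $0 = \langle \Delta_{X_M}\omega, \omega\rangle$ and split it as
\[
\langle \d_{X_M}\delta_{X_M}\omega, \omega\rangle + \langle \delta_{X_M}\d_{X_M}\omega, \omega\rangle.
\]
Applying Green's formula (\ref{eq.2.16}) to the first term with $\alpha = \delta_{X_M}\omega$, $\beta = \omega$ yields $\|\delta_{X_M}\omega\|^2 + \int_{\partial M} i^*(\delta_{X_M}\omega \wedge \star\omega)$. Applying it to the second, with $\alpha = \omega$, $\beta = \d_{X_M}\omega$ and rearranging, yields $\|\d_{X_M}\omega\|^2 - \int_{\partial M} i^*(\omega \wedge \star\d_{X_M}\omega)$. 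Combining,
\[
0 = \|\d_{X_M}\omega\|^2 + \|\delta_{X_M}\omega\|^2 + \int_{\partial M} i^*(\delta_{X_M}\omega \wedge \star\omega) - \int_{\partial M} i^*(\omega \wedge \star\d_{X_M}\omega).
\]
Since $i^*$ is multiplicative on wedge products, each boundary integrand is a wedge in which one factor is among $i^*\omega$, $i^*(\star\omega)$, $i^*(\delta_{X_M}\omega)$, or $i^*(\star\d_{X_M}\omega)$, so any one of the four stated pairs of boundary conditions kills \emph{both} boundary terms simultaneously. Then $\|\d_{X_M}\omega\|^2 + \|\delta_{X_M}\omega\|^2 = 0$ gives $\omega \in \mathcal{H}_{X_M}(M)$.

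A minor bookkeeping point is that $\d_{X_M}$ and $\delta_{X_M}$ are inhomogeneous; however (\ref{eq.2.16}) is stated for arbitrary $\alpha,\beta \in \mathsf{H}^1\Omega_G$, and both $\d_{X_M}$ and $\delta_{X_M}$ and $\star$ respect the even/odd splitting (mapping $\Omega_G^\pm$ to $\Omega_G^\mp$ or $\Omega_G^{n-\pm}$), so the identities go through by summing over the two parity components. The main thing to be careful about is simply tracking the sign in the second Green's-formula application (so that the integral appears with a minus sign, matching cases (3) and (4)); I do not expect any genuine obstacle beyond this sign check and the verification that each of the four cases indeed annihilates both boundary integrals.
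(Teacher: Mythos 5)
Your argument is correct and is precisely the paper's own proof (which is stated in one line as ``apply Green's formula (\ref{eq.2.16}) to $\langle\Delta_{X_M}\omega,\omega\rangle=0$ and use any of the conditions (1)--(4)''), with the details filled in. The two applications of (\ref{eq.2.16}), the minus sign on the second boundary integral, and the observation that each of the four pairs of conditions kills both boundary terms via multiplicativity of $i^*$ are all exactly as intended.
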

\begin{proof}
Because $\Delta_{X_{M}}\omega=0$, one has $\langle \Delta_{X_{M}}
\omega,\omega\rangle=0$.  Now applying Green's formula~(\ref{eq.2.16}) to this
and using any of these conditions (1)--(4) ensures $\omega$ is an
$X_{M}$-harmonic field.
\end{proof}

\begin{remark}\label{r.6'}
An averaging argument shows that $\mathsf{H^{1}}\Omega_{G},_{D}$ and
$\mathsf{H^{1}}\Omega_{G},_{N}$ are dense in $L^{2}\Omega_{G}$,
because the corresponding statements hold for the spaces of all (not only invariant) forms.
\end{remark}

\subsection{Elliptic boundary value problem}\label{subsec. Elliptic BVP}

The essential ingredients that Schwarz \cite{Schwarz} needs to prove the classical \emph{Hodge-Morry-Friedrichs} decomposition are Gaffney's inequality and his Theorem 2.1.5. However, these results do not appear to extend to the context
of $\d_{X_M}$ and $\delta_{X_M}$. Therefore, we use a different approach to
overcome this problem, based on the ellipticity of a certain
boundary value problem (\textsc{bvp}), namely (\ref{eq.2.18}) below. This theorem represents the keystone to extending the Hodge-Morrey and Friedrichs decomposition theorems to the present setting and thence to extending Witten's results to manifolds with boundary.

Consider the \textsc{bvp}
\begin{equation} \label{eq.2.18}
\left\{\begin{array}{rcl}
  \Delta_{X_{M}}\omega &=& \eta \quad \textrm{on} \quad M \\
   i^{*}\omega &=& 0 \quad \textrm{on} \quad \partial M\\
    i^{*}(\delta_{X_{M}}\omega)&=& 0 \quad  \textrm{on} \quad \partial M.
\end{array}\right.
\end{equation}

\begin{theorem}\label{thm.2.5}\quad
\begin{enumerate}
\item The \textsc{bvp} (\ref{eq.2.18}) is elliptic in the sense of Lopatinski\v{\i}-\v{S}apiro,  where $\Delta_{X_{M}}:\Omega_{G}(M)\longrightarrow \Omega_{G}(M)$.
\item The \textsc{bvp} (\ref{eq.2.18}) is Fredholm of index 0.
\item All $\omega\in\mathcal{H}_{X_M,D}\cup\mathcal{H}_{X_M,N}$ are smooth.
\end{enumerate}
\end{theorem}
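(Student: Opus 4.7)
The strategy is to reduce everything to the classical (non-equivariant) Hodge boundary value problem treated in Schwarz \cite{Schwarz}, by exploiting the simple observation that the inhomogeneous pieces $\iota_{X_M}$ and $\star\iota_{X_M}\star$ are \emph{zeroth-order} (algebraic) operators. Consequently the principal symbol of $\Delta_{X_M}=(\d_{X_M}+\delta_{X_M})^2$ coincides with that of the ordinary Hodge Laplacian $\Delta$, and the principal parts of the boundary operators $i^{*}$ and $i^{*}\delta_{X_M}$ coincide with those of $i^{*}$ and $i^{*}\delta$. A minor preliminary point is to verify that the ellipticity theory is not disturbed by restricting to $\Omega_G$; but invariance is preserved by $\Delta_{X_M}$ and the boundary operators (since $X_M$ is tangent to $\partial M$ and $G$ acts by isometries), so one can work either $G$-equivariantly or drop invariance and average afterwards.

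For part (1), the Lopatinski\v{\i}-\v{S}apiro covering condition is a condition on principal symbols only, evaluated at boundary covectors. By the remark above, it is identical to the covering condition for the classical Dirichlet-type system
\[
\Delta\omega=\eta,\quad i^{*}\omega=0,\quad i^{*}(\delta\omega)=0,
\]
which Schwarz shows to be elliptic (see the BVP treated in \cite{Schwarz}). Hence (\ref{eq.2.18}) is elliptic in the Lopatinski\v{\i}-\v{S}apiro sense.

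For part (2), Fredholmness on the appropriate Sobolev scale now follows from the general theory of elliptic boundary value problems. To obtain index zero, the cleanest route is a homotopy argument: for $t\in[0,1]$ consider the family of BVPs
\[
\Delta_{tX_{M}}\omega=\eta,\qquad i^{*}\omega=0,\qquad i^{*}(\delta_{tX_{M}}\omega)=0.
\]
At each $t$ the difference from the classical case is of order zero, so the principal symbols and the Lopatinski\v{\i}-\v{S}apiro condition are identical to those at $t=0$; the family is therefore continuously elliptic and defines a continuous family of Fredholm operators between fixed Sobolev spaces. Homotopy invariance of the Fredholm index, together with the classical fact that the Hodge BVP at $t=0$ has index zero, yields the claim. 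Alternatively one may argue directly that $\Delta_{X_M}$ is formally self-adjoint on $\ker i^{*}\cap\ker(i^{*}\delta_{X_M})$ using Green's formula (\ref{eq.2.16}).

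For part (3), elliptic regularity for Lopatinski\v{\i}-\v{S}apiro systems applies. If $\omega\in\mathcal{H}_{X_M,D}$, then $\Delta_{X_M}\omega=0$, $i^{*}\omega=0$, and $\delta_{X_M}\omega=0$ globally hence $i^{*}(\delta_{X_M}\omega)=0$; so $\omega$ is a weak solution of the elliptic BVP (\ref{eq.2.18}) with $\eta=0$, whence $\omega$ is $C^{\infty}$. For $\omega\in\mathcal{H}_{X_M,N}$, apply the Hodge star: $\star\omega\in\mathcal{H}_{X_M,D}$ is smooth by the previous case, hence so is $\omega$.

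The main obstacle, and the only real point to verify carefully, is the Lopatinski\v{\i}-\v{S}apiro condition in (1); everything else is a straightforward corollary once that is in hand. The key conceptual point — and the reason the whole package extends from the classical theory to the Witten-deformed theory without serious extra work — is that Witten's deformation is a lower-order perturbation, invisible at the level of principal symbols.
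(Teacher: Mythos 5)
Your proposal is correct and follows essentially the same route as the paper: both reduce the problem to the classical Hodge \textsc{bvp} of Schwarz by observing that the Witten deformation is of lower order and hence invisible to the principal symbols, deduce ellipticity and Fredholmness, obtain index zero from stability of the index under lower-order perturbation (the paper cites H\"ormander's Theorem 20.1.8 where you run the homotopy explicitly --- the same fact), and get smoothness of Dirichlet fields from elliptic regularity and of Neumann fields via the Hodge star. Your extra remark about checking that restriction to $G$-invariant forms is harmless is a small point the paper leaves implicit, but it does not change the argument.
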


\begin{proof}
(1)
We can see that $\Delta$ and $\Delta_{X_M}$ have the same principal
symbol as $\Delta_{X_M}-\Delta$ is a first order differential
operator; indeed, 
$$\Delta_{X_M} = \Delta+(-1)^{n(k+1)+1}(\d\star
\iota_{X_{M}}\star+\star \iota_{X_{M}}\star \d + \star
\iota_{X_{M}}\star \iota_{X_{M}}+ \iota_{X_{M}}\star
\iota_{X_{M}}\star) +\iota_{X_{M}} \delta+ \delta \iota_{X_{M}}\,.$$
Similarly, expanding the second boundary condition gives
$$\delta_{X_M} = \delta + (-1)^{n(k+1)+1}\star\iota_{X_M}\star$$
so $\delta_{X_M}$ and $\delta$ have the same first-order part. Hence our \textsc{bvp}~(\ref{eq.2.18})  has the same principal symbol as
the  \textsc{bvp}
\begin{equation} \label{eq.2.20}
\left\{\begin{array}{rcl}
  \Delta\epsilon &=& \xi \quad \textrm{on} \quad M \\
   i^{*}\epsilon &=& 0 \quad \textrm{on} \quad \partial M\\
    i^{*}(\delta\epsilon)&=& 0 \quad  \textrm{on} \quad \partial M
\end{array}\right.
\end{equation}
for $\epsilon,\,\xi \in \Omega(M)$, because the principal symbol does not change when terms of lower order are added to the operator. However the \textsc{bvp}~(\ref{eq.2.20}) is elliptic in the sense of Lopatinski\v{\i}-\v{S}apiro conditions \cite{Lars,Schwarz}, and thus  so is (\ref{eq.2.18}).

\medskip

\noindent(2) From part (1), since the \textsc{bvp}~(\ref{eq.2.18})
is elliptic, it follows that the \textsc{bvp}~(\ref{eq.2.18}) is a Fredholm operator and the
regularity theorem holds, see for example Theorem 1.6.2 in \cite{Schwarz} or Theorem 20.1.2 in \cite{Lars}. In addition, we observe that the only
differences between \textsc{bvp}~(\ref{eq.2.20}) and our
\textsc{bvp}~(\ref{eq.2.18}) are all lower order operators and it is
proved in \cite{Schwarz} that the index of
\textsc{bvp}~(\ref{eq.2.20}) is zero but Theorem 20.1.8 in
\cite{Lars} asserts generally that if the difference between two
\textsc{bvp}s are just lower order operators then they must have
the same index. Hence, the index of the \textsc{bvp}~(\ref{eq.2.18})
must be zero.

\medskip

\noindent(3)
Let $\omega\in\mathcal{H}_{X_M,D}\cup\mathcal{H}_{X_M,N}$.  If $\omega
\in \mathcal{H}_{X_M,D}$ then it satisfies the \textsc{bvp}~(\ref{eq.2.18}) with $\eta=0$, so by the regularity properties of elliptic \textsc{bvp}s, the
smoothness of $\omega$ follows. If on the other hand $\omega \in \mathcal{H}_{X_M,N}$ then $\star\omega\in \mathcal{H}_{X_{M},D}$ which is therefore smooth and consequently $\omega=\pm\star(\star\omega)$ is smooth as well.
\end{proof}

We consider the resulting operator obtained by restricting $\Delta_{X_M}$ to the subspace of smooth invariant forms satisfying the boundary conditions
\begin{equation}\label{eq.2.21}
\overline{\Omega}_{G}(M)=\{\omega \in \Omega_{G}(M)
 \mid i^{*}\omega = 0, \,i^{*}(\delta_{X_{M}}\omega)= 0\}
\end{equation}

Since the trace map $i^*$ is well-defined on $\mathsf{H}^s\Omega_G$
for $s>1/2$ it follows that it makes sense to consider
$\mathsf{H^{2}}\overline{\Omega}_{G}(M)$, which is a closed subspace
of $\mathsf{H^{2}}\Omega_{G}(M)$ and hence a Hilbert space. For
simplicity, we rewrite our \textsc{bvp}~(\ref{eq.2.18}) as
follows:  consider the restriction/extension of
$\Delta_{X_M}$ to this space:
$$A=\Delta_{X_{M}}\restrict{\mathsf{H^{2}}\overline{\Omega}_{G}(M)}:
\mathsf{H^{2}}\overline{\Omega}_{G}(M)\longrightarrow
L^{2}\Omega_{G}(M).
$$
and consider the \textsc{bvp},
\begin{equation}\label{eq.2.22}
A\omega=\eta
\end{equation}
for $\omega\in\mathsf{H^{2}}\overline{\Omega}_{G}(M)$ and $\eta\in
L^{2}\Omega_{G}(M)$ instead of \textsc{bvp} (\ref{eq.2.18}) which are
in fact compatible. In addition, from Theoremsa~\ref{thm.2.5} we
deduce that $A$ is an elliptic and Fredholm operator and
\begin{equation}\label{index}
    \mathrm{index}(A)=\dim(\ker A)-\dim(\ker A^*)=0
\end{equation}
where $A^*$ is the adjoint operator of $A$.

From Green's formula (eq. (\ref{eq.2.16})) we deduce the following
property.

\begin{lemma}\label{L.2.33}
$A$ is $L^2$-self-adjoint on $\mathsf{H^{2}}\overline{\Omega}_{G}(M)$, meaning that for all $\alpha,\beta\in\mathsf{H^{2}}\overline{\Omega}_{G}(M)$ we have
$$\left<A\alpha,\,\beta\right> = \left<\alpha,\,A\beta\right>,$$
where $\left<-,-\right>$ is the $L^2$-pairing.
\end{lemma}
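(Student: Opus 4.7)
The plan is to reduce the symmetry identity $\langle A\alpha, \beta\rangle = \langle \alpha, A\beta\rangle$ to an expression that is manifestly symmetric in $\alpha$ and $\beta$, by integrating by parts twice using Green's formula (\ref{eq.2.16}). Since $\Delta_{X_M} = \d_{X_M}\delta_{X_M} + \delta_{X_M}\d_{X_M}$, I would split $\langle A\alpha,\beta\rangle$ into two pieces and handle each by a single application of Green's formula, with the Dirichlet condition $i^*\alpha=0$ killing one boundary term and the condition $i^*(\delta_{X_M}\alpha)=0$ killing the other.

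Concretely, for the first piece, (\ref{eq.2.16}) with the form $\delta_{X_M}\alpha$ in place of $\alpha$ gives
\begin{equation*}
\langle \d_{X_M}\delta_{X_M}\alpha,\beta\rangle
= \langle \delta_{X_M}\alpha,\delta_{X_M}\beta\rangle
+ \int_{\partial M} i^*\bigl(\delta_{X_M}\alpha\wedge\star\beta\bigr),
\end{equation*}
and the boundary integral vanishes because $i^*(\delta_{X_M}\alpha)=0$. For the second piece, I would first derive the dual Green's formula by swapping the roles of the two arguments in (\ref{eq.2.16}):
\begin{equation*}
\langle \delta_{X_M}\gamma,\beta\rangle
= \langle \gamma,\d_{X_M}\beta\rangle
- \int_{\partial M} i^*\bigl(\beta\wedge\star\gamma\bigr),
\end{equation*}
and then set $\gamma=\d_{X_M}\alpha$; the boundary integrand now contains $i^*\beta=0$. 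Adding the two pieces yields
\begin{equation*}
\langle A\alpha,\beta\rangle
= \langle \d_{X_M}\alpha,\d_{X_M}\beta\rangle
+ \langle \delta_{X_M}\alpha,\delta_{X_M}\beta\rangle,
\end{equation*}
which is visibly symmetric in $\alpha$ and $\beta$. Running the same argument with the roles of $\alpha$ and $\beta$ exchanged, and using that $\beta$ also lies in $\mathsf{H}^2\overline{\Omega}_G(M)$ so satisfies both boundary conditions, produces the same right-hand side for $\langle\alpha,A\beta\rangle$, completing the proof.

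There is no genuine obstacle here; the content of the lemma is the observation that the two boundary conditions defining $\overline{\Omega}_G(M)$ are precisely those needed to kill the two boundary integrals that appear. The only point requiring care is regularity: Green's formula (\ref{eq.2.16}) is stated for $\mathsf{H}^1$ forms, but because $\alpha,\beta\in\mathsf{H}^2\overline{\Omega}_G(M)$ the forms $\d_{X_M}\alpha$ and $\delta_{X_M}\alpha$ lie in $\mathsf{H}^1$ and their traces on $\partial M$ lie in $\mathsf{H}^{1/2}$, so each pairing and boundary integral is well defined. The vanishing of $i^*\alpha$ and $i^*(\delta_{X_M}\alpha)$ as elements of these trace spaces follows from continuity of the trace map applied to the defining relations for $\mathsf{H}^2\overline{\Omega}_G(M)$.
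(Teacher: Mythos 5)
Your proof is correct and is exactly the argument the paper intends: the paper gives no written proof beyond the remark that the lemma follows from Green's formula (\ref{eq.2.16}), and your double integration by parts, with the two boundary conditions of $\overline{\Omega}_{G}(M)$ killing the two boundary integrals to leave the symmetric expression $\langle \d_{X_M}\alpha,\d_{X_M}\beta\rangle+\langle\delta_{X_M}\alpha,\delta_{X_M}\beta\rangle$, is precisely that deduction. Your closing remark on regularity ($\d_{X_M}\alpha,\delta_{X_M}\alpha\in\mathsf{H}^1$ for $\alpha\in\mathsf{H}^2$, so the traces make sense) is a welcome bit of care that the paper leaves implicit.
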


\begin{theorem}\label{tm.2.6'}
The space $ \mathcal{H}_{X_{M},D}(M)$ is finite dimensional and
\begin{equation}\label{eq.2.24}
     L^{2}\Omega_{G}(M)= \mathcal{H}_{X_{M},D}(M) \oplus \mathcal{H}_{X_{M},D}(M)^{\perp}.
  \end{equation}
\end{theorem}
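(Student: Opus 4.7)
The plan is to identify $\mathcal{H}_{X_M,D}(M)$ with the kernel of the Fredholm operator $A$ from Theorem~\ref{thm.2.5}, which immediately gives finite-dimensionality, and then use standard Hilbert-space facts to obtain the orthogonal decomposition.

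First I would verify that $\mathcal{H}_{X_M,D}(M) = \ker A$. The inclusion $\mathcal{H}_{X_M,D}(M) \subseteq \ker A$ is almost by definition: if $\omega \in \mathcal{H}_{X_M,D}$, then $\d_{X_M}\omega = 0$ and $\delta_{X_M}\omega = 0$, so in particular $\Delta_{X_M}\omega = 0$; also $i^*\omega = 0$ (membership in $\Omega_{G,D}$), and $i^*(\delta_{X_M}\omega) = i^*(0) = 0$. To get membership in $\mathsf{H}^2\overline{\Omega}_G(M)$, I invoke the regularity/smoothness conclusion of Theorem~\ref{thm.2.5}(3), which tells us $\omega$ is smooth. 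For the reverse inclusion, suppose $\omega \in \ker A$. Then $\Delta_{X_M}\omega = 0$, $i^*\omega = 0$, and $i^*(\delta_{X_M}\omega) = 0$, so by Proposition~\ref{pro.2.6}(2), $\omega$ is an $X_M$-harmonic field, i.e.\ $\d_{X_M}\omega = 0 = \delta_{X_M}\omega$. Combined with $i^*\omega = 0$ and smoothness, this places $\omega$ in $\mathcal{H}_{X_M,D}(M)$.

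Next, since $A$ is Fredholm by Theorem~\ref{thm.2.5}(2), its kernel is finite-dimensional, so $\mathcal{H}_{X_M,D}(M)$ is finite-dimensional. A finite-dimensional subspace of the Hilbert space $L^2\Omega_G(M)$ is automatically closed, so the standard orthogonal projection theorem yields
$$L^2\Omega_G(M) = \mathcal{H}_{X_M,D}(M) \oplus \mathcal{H}_{X_M,D}(M)^{\perp},$$
as claimed.

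The only delicate point is the smoothness/regularity argument required to move between the $\mathsf{H}^1$ definition of $\mathcal{H}_{X_M,D}$ and the $\mathsf{H}^2$ domain of $A$; but this is precisely what Theorem~\ref{thm.2.5}(3) was set up to deliver. Once that is in hand, the proof is a direct application of Fredholm theory together with elementary Hilbert-space geometry, and I do not foresee any substantial further obstacle.
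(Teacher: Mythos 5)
Your proposal is correct and follows essentially the same route as the paper: identify $\mathcal{H}_{X_M,D}(M)$ with $\ker A$ (using condition (2) of Proposition~\ref{pro.2.6} for the reverse inclusion), deduce finite-dimensionality from the Fredholm property, and conclude closedness and the orthogonal splitting. Your extra care in invoking the regularity statement of Theorem~\ref{thm.2.5}(3) to place $\mathcal{H}_{X_M,D}(M)$ inside the $\mathsf{H}^2$ domain of $A$ is a point the paper passes over with ``it is clear that,'' and is a welcome precision rather than a deviation.
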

\begin{proof}
We begin by showing that $\ker A = \mathcal{H}_{X_{M},D}(M)$. It is
clear that $\mathcal{H}_{X_{M},D}(M)\subseteq \ker A$, so we need
only prove that $\ker A \subseteq \mathcal{H}_{X_{M},D}(M)$.

Let $\omega \in \ker A$.  Then $\omega$ satisfies the \textsc{bvp} (\ref{eq.2.18}). Therefore, by condition (2) of Proposition~\ref{pro.2.6}, it follows that $\omega \in  \mathcal{H}_{X_{M},D}(M)$, as required.

Now, $\ker A = \mathcal{H}_{X_{M},D}(M)$ but $\dim\ker A$ is finite, so that $\dim\mathcal{H}_{X_{M},D}(M)<\infty$. This implies that $\mathcal{H}_{X_{M},D}(M)$ is a closed subspace of the Hilbert space $L^{2}\Omega_{G}(M)$, hence eq.~(\ref{eq.2.24}) holds.
\end{proof}

\begin{theorem}\label{tm.2.6}
\begin{equation}\label{eq.2.23}
\mathrm{Range}(A) = \mathcal{H}_{X_{M},D}(M)^{\perp}
\end{equation}
where $\perp$ denotes the orthogonal complement in $L^2\Omega_{G}(M)$.
\end{theorem}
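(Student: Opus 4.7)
The strategy is to combine the three ingredients already in hand: (i) the identification $\ker A = \mathcal{H}_{X_M,D}(M)$ from the proof of Theorem~\ref{tm.2.6'}, (ii) the $L^2$-self-adjointness of $A$ on $\mathsf{H}^{2}\overline{\Omega}_{G}(M)$ from Lemma~\ref{L.2.33}, and (iii) the Fredholm property (closed range, index zero) from Theorem~\ref{thm.2.5}. Together these force $\mathrm{Range}(A)$ to be a closed subspace of $\mathcal{H}_{X_M,D}(M)^{\perp}$ with the same (finite) codimension, hence equal to it.

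First I would establish the easy inclusion $\mathrm{Range}(A) \subseteq \mathcal{H}_{X_M,D}(M)^{\perp}$. Given $\eta = A\omega$ with $\omega \in \mathsf{H}^{2}\overline{\Omega}_{G}(M)$ and any $h \in \mathcal{H}_{X_M,D}(M)$, note that $h$ is smooth (by part (3) of Theorem~\ref{thm.2.5}), lies in the domain $\mathsf{H}^{2}\overline{\Omega}_{G}(M)$, and satisfies $Ah = 0$. Applying Lemma~\ref{L.2.33} gives
\begin{equation*}
\left<\eta,h\right> = \left<A\omega,h\right> = \left<\omega,Ah\right> = 0,
\end{equation*}
so $\eta \in \mathcal{H}_{X_M,D}(M)^{\perp}$.

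For the reverse inclusion I would use a codimension count. Because $A$ is Fredholm (Theorem~\ref{thm.2.5}(2)) its range is closed in $L^{2}\Omega_G(M)$, and because $A$ has index zero together with $\ker A = \mathcal{H}_{X_M,D}(M)$, we have
\begin{equation*}
\mathrm{codim}\,\mathrm{Range}(A) = \dim\ker A^{*} = \dim\ker A = \dim\mathcal{H}_{X_M,D}(M).
\end{equation*}
On the other hand $\mathcal{H}_{X_M,D}(M)^{\perp}$ has codimension $\dim\mathcal{H}_{X_M,D}(M)$ as well, since $\mathcal{H}_{X_M,D}(M)$ is finite dimensional by Theorem~\ref{tm.2.6'}. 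A closed subspace that is contained in another closed subspace of the same finite codimension must coincide with it, which gives the desired equality.

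The only potentially delicate point is the use of the identity $\mathrm{codim}\,\mathrm{Range}(A) = \dim\ker A^{*}$ for the unbounded operator $A\colon \mathsf{H}^{2}\overline{\Omega}_{G}(M) \to L^{2}\Omega_{G}(M)$: here $A^*$ refers to the Hilbert-space adjoint of $A$ viewed as a densely-defined closed operator on $L^{2}\Omega_G(M)$, which exists because $\mathsf{H}^{2}\overline{\Omega}_{G}(M)$ is dense in $L^{2}\Omega_{G}(M)$ (by Remark~\ref{r.6'} and the density of $\mathsf{H}^{2}$ in $L^{2}$). The equality then follows from the standard closed-range theorem, and the index computation from Theorem~\ref{thm.2.5}(2) provides the necessary equality of kernel dimensions of $A$ and $A^{*}$.
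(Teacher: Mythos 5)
Your proposal is correct and follows essentially the same route as the paper: both establish the inclusion $\mathrm{Range}(A)\subseteq\mathcal{H}_{X_M,D}(M)^{\perp}$ via the $L^2$-self-adjointness of $A$ (Lemma~\ref{L.2.33}), and then upgrade it to equality using the closed range theorem together with the index-zero identity $\dim\ker A^{*}=\dim\ker A=\dim\mathcal{H}_{X_M,D}(M)$. The only cosmetic difference is that the paper phrases the final step as forcing $\ker A^{*}=\mathcal{H}_{X_M,D}(M)$ while you phrase it as a codimension count between two nested closed subspaces; these are the same argument.
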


\begin{proof}
 Firstly, we should observe that eq.~(\ref{index}) asserts that $\ker A\cong \ker A^*$ but
Theorem \ref{tm.2.6'} shows that $\ker A = \mathcal{H}_{X_{M},D}(M)$, thus
\begin{equation}\label{eq.coker}
    \ker A^*\cong  \mathcal{H}_{X_{M},D}(M)
\end{equation}

Since $\mathrm{Range}(A)$ is closed in $L^2\Omega_{G}(M)$ because
$A$ is Fredholm operator, it follows from the closed range theorem
in Hilbert spaces that
\begin{equation}\label{eq.closed range }
\mathrm{Range}(A) = (\ker A^*)^{\perp} \quad \equiv \quad
\mathrm{Range}(A)^{\perp} = \ker A^*
\end{equation}
Hence, we just need to prove that $\ker A^*=
\mathcal{H}_{X_{M},D}(M)$, and to show that we need first to prove
\begin{equation}\label{eq.subset1}
\mathrm{Range}(A)\subseteq \mathcal{H}_{X_{M},D}(M)^{\perp}.
\end{equation}
So, if $\alpha\in\mathsf{H^{2}}\overline{\Omega}_{G}(M)$ and $\beta
\in\mathcal{H}_{X_{M},D}(M) $ then applying Lemma \ref{L.2.33} gives
$$\left<A\alpha,\,\beta\right> =0$$
hence, eq.~(\ref{eq.subset1}) holds. Moreover, equations~(\ref{eq.closed
range }) and (\ref{eq.subset1}) and the closedness of
$\mathcal{H}_{X_{M},D}(M)$ imply
\begin{equation}\label{eq.subset2}
  \mathcal{H}_{X_{M},D}(M) \subseteq \ker A^*
\end{equation}
but eq.~(\ref{eq.coker}) and eq.~(\ref{eq.subset2}) force $\ker A^*=
\mathcal{H}_{X_{M},D}(M)$. Hence, $\mathrm{Range}(A) =
\mathcal{H}_{X_{M},D}(M)^{\perp}$.
\end{proof}

Following \cite{Schwarz}, we denote the $L^{2}$-orthogonal
complement of $\mathcal{H}_{X_{M},D}(M)$ in the space
$\mathsf{H^{2}}\Omega_{G,D}$ by
\begin{equation}\label{eq.2.27}
\mathcal{H}_{X_{M},D}(M)^{\operp} = \mathsf{H^{2}}\Omega_{G,D}\cap
\mathcal{H}_{X_{M},D}(M)^{\perp}
\end{equation}
(although in \cite{Schwarz} it denotes $\mathsf{H}^1$-forms rather than $\mathsf{H}^2$).

\begin{proposition}\label{pro.2.7}
For each $\eta \in \mathcal{H}_{X_{M},D}(M)^{\perp}$   there is a
unique differential form $\omega \in \mathcal{H}_{X_{M},D}(M)^{\operp}$ satisfying the \textsc{bvp}~(\ref{eq.2.18}).
\end{proposition}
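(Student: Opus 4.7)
The plan is to combine the two preceding theorems: Theorem~\ref{tm.2.6'} identifies $\ker A$ with $\mathcal{H}_{X_{M},D}(M)$, while Theorem~\ref{tm.2.6} identifies $\mathrm{Range}(A)$ with $\mathcal{H}_{X_{M},D}(M)^{\perp}$. Together these already say that $A$ maps any complement of $\ker A$ in $\mathsf{H^{2}}\overline{\Omega}_{G}(M)$ bijectively onto $\mathcal{H}_{X_{M},D}(M)^{\perp}$. So what remains is to check that $\mathcal{H}_{X_{M},D}(M)^{\operp}$ (intersected with the forms satisfying the full \textsc{bvp}) is exactly such a complement.

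For existence, given $\eta\in\mathcal{H}_{X_{M},D}(M)^{\perp}$, Theorem~\ref{tm.2.6} supplies some $\omega_{0}\in\mathsf{H^{2}}\overline{\Omega}_{G}(M)$ with $A\omega_{0}=\eta$; this $\omega_{0}$ already solves~(\ref{eq.2.18}) but may have a nonzero component in the finite-dimensional space $\mathcal{H}_{X_{M},D}(M)$. Let $h$ be the $L^{2}$-orthogonal projection of $\omega_{0}$ onto $\mathcal{H}_{X_{M},D}(M)$ and set $\omega:=\omega_{0}-h$. Since $h$ is smooth by Theorem~\ref{thm.2.5}(3) and satisfies $i^{*}h=0$ as well as $\d_{X_{M}}h=\delta_{X_{M}}h=0$, subtracting it preserves the Dirichlet condition, preserves the auxiliary condition $i^{*}(\delta_{X_{M}}\omega)=0$, and keeps $A\omega=\eta$. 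Hence $\omega$ lies in $\mathcal{H}_{X_{M},D}(M)^{\operp}$ (it is $\mathsf{H^{2}}$, Dirichlet, and orthogonal to the harmonic fields by construction) and still satisfies~(\ref{eq.2.18}).

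For uniqueness, suppose $\omega_{1},\omega_{2}\in\mathcal{H}_{X_{M},D}(M)^{\operp}$ both solve~(\ref{eq.2.18}) with the same right-hand side. Then $\omega_{1}-\omega_{2}\in\ker A=\mathcal{H}_{X_{M},D}(M)$ by Theorem~\ref{tm.2.6'}, while simultaneously $\omega_{1}-\omega_{2}\in\mathcal{H}_{X_{M},D}(M)^{\operp}\subseteq\mathcal{H}_{X_{M},D}(M)^{\perp}$; the intersection of a subspace with its orthogonal complement is trivial, so $\omega_{1}=\omega_{2}$.

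The one point where care is required is verifying that removing the harmonic-field piece $h$ does not destroy the second boundary condition of the \textsc{bvp}. This is the main potential obstacle but it dissolves immediately, because $h\in\mathcal{H}_{X_{M},D}(M)$ means $\delta_{X_{M}}h$ vanishes identically on all of $M$, not just on $\partial M$, so the trace $i^{*}(\delta_{X_{M}}\omega)=i^{*}(\delta_{X_{M}}\omega_{0})$ is automatically zero.
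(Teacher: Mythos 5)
Your proposal is correct and follows essentially the same route as the paper: solve $A\omega_0=\eta$ using Theorem~\ref{tm.2.6}, subtract the $\mathcal{H}_{X_{M},D}(M)$-component supplied by the decomposition~(\ref{eq.2.24}) (which, being in $\ker A$, disturbs neither the equation nor the boundary conditions), and get uniqueness from $\ker A\cap\mathcal{H}_{X_{M},D}(M)^{\perp}=\{0\}$. Your explicit check that the trace condition $i^{*}(\delta_{X_{M}}\omega)=0$ survives the subtraction is a point the paper passes over more briefly, but the argument is the same.
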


\begin{proof}
Let $\eta \in \mathcal{H}_{X_{M},D}(M)^{\perp}$.  Because of Theorem
(\ref{tm.2.6}) there is a differential form $\gamma \in
\mathsf{H^{2}}\overline{\Omega}_{G}(M)$ such that $\gamma$ satisfies
the \textsc{bvp}~(\ref{eq.2.18}). Since $ \gamma \in
\mathsf{H^{2}}\overline{\Omega}_{G}(M)\subseteq L^{2}\Omega_{G}(M)$
then there are unique differential forms $\alpha \in
\mathcal{H}_{X_{M},D}(M)$ and $\omega \in
\mathcal{H}_{X_{M},D}(M)^{\perp} $ such that $\gamma=\alpha+\omega$
because of eq.~(\ref{eq.2.24}).

Since $\gamma$ satisfies the \textsc{bvp}~(\ref{eq.2.18}) it follows
that $\omega$ satisfies the \textsc{bvp}~(\ref{eq.2.18}) as well
because $\alpha \in \mathcal{H}_{X_{M},D}(M) =
\ker(\Delta_{X_{M}}\restrict{\mathsf{H^{2}}\overline{\Omega}_{G}(M)})$.
Since $\omega=\gamma-\alpha$, it follows that $\omega \in
\mathsf{H^{2}}\Omega_{G,D}$ , hence $\omega \in
\mathcal{H}_{X_{M},D}(M)^{\operp}$ and it is unique.
\end{proof}

\begin{remarks}\label{r.8}
\begin{enumerate}
\item[(1)]  $\omega$  satisfying the \textsc{bvp}~(\ref{eq.2.18}) in
Proposition~\ref{pro.2.7} can be recast to the condition
\begin{equation}\label{eq.2.28}
  \langle \d_{X_{M}} \omega,\, \d_{X_{M}}\xi\rangle + \langle\delta_{X_{M}}\omega,\, \delta_{X_{M}}\xi\rangle
  =\langle \eta,\xi\rangle,\quad
\forall  \xi \in \mathsf{H^{1}}\Omega_{G,D}
\end{equation}

\item[(2)]All the results above can be recovered but in terms of
$\mathcal{H}_{X_{M},N}(M)$ because the Hodge star operator defines
an isomorphism $L^2\Omega_G\cong L^2\Omega_G$ which restricts to
$\mathcal{H}_{X_{M},D}(M)\cong \mathcal{H}_{X_{M},N}(M)$.
\end{enumerate}
\end{remarks}

\subsection{Decomposition theorems}

The results above provide the basic ingredients needed to
extend the Hodge-Morrey and Freidrichs decompositions arising for
Hodge theory on manifolds with boundary, to the present setting with
$\d_{X_M}$ and $\delta_{X_M}$.  Depending on these results, the
proofs in this subsection rely heavily on the analogues of the
corresponding statements for the usual Laplacian $\Delta$ on a
manifold with boundary, as described in the book of Schwarz
\cite{Schwarz}. Therefore, we omit the proofs here while full
details are given in the first author's thesis \cite{my thesis}.

\begin{definition}\label{d.2.4}
Define the following two sets of invariant exact and coexact forms on $M$,
\begin{eqnarray*}
    \mathcal{E}_{X_M}(M)&=&\{\d_{X_{M}} \alpha \mid  \alpha \in
    \mathsf{H^{1}}\Omega_{G,D}\}\subseteq L^{2}\Omega_{G}(M),\\
     \mathcal{C}_{X_M}(M) &=& \{\delta_{X_{M}} \beta \mid  \beta \in
    \mathsf{H^{1}}\Omega_{G,N}\}\subseteq L^{2}\Omega_{G}(M).
\end{eqnarray*}
Clearly,  $\mathcal{E}_{X_M}(M)\perp \mathcal{C}_{X_M}(M)$ because
of eq.~(\ref{eq.2.16}).  We denote by $L^{2}\mathcal{H}_{X_{M}}(M) =
\overline{\mathcal{H}_{X_{M}}(M)}$ the $L^{2}$-closure of the space
$\mathcal{H}_{X_{M}}(M)$.
\end{definition}

\begin{proposition}[Algebraic decomposition and $L^{2}$-closedness]\label{pro.2.8}
\begin{enumerate}
\item [(a)]Each $\omega \in L^{2}\Omega_{G}(M)$ can be split uniquely into
$$\omega=\d_{X_{M}} \alpha_{\omega}+\delta_{X_{M}} \beta_\omega
+\kappa_{\omega}$$ where $\d_{X_{M}} \alpha_{\omega} \in
\mathcal{E}_{X_M}(M)$ , $\delta_{X_{M}} \beta_{\omega} \in
\mathcal{C}_{X_M}(M)$ and $\kappa_{\omega} \in
(\mathcal{E}_{X_M}(M)\oplus \mathcal{C}_{X_M}(M))^{\perp}$.

\item[(b)]The spaces $\mathcal{E}_{X_M}(M)$ and $\mathcal{C}_{X_M}(M)$ are closed subspaces of $L^{2}\Omega_{G}(M)$.
\item[(c)] Consequently there is the following orthogonal decomposition
$$
L^{2}\Omega_{G}(M)=\mathcal{E}_{X_M}(M)\oplus
\mathcal{C}_{X_M}(M)\oplus(\mathcal{E}_{X_M}(M)\oplus
\mathcal{C}_{X_M}(M))^{\perp}
$$
\end{enumerate}
\end{proposition}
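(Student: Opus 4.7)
The plan is to mirror Schwarz's proof \cite{Schwarz} of the classical Hodge-Morrey decomposition, with $\d$ and $\delta$ replaced throughout by $\d_{X_M}$ and $\delta_{X_M}$. The analytic input --- ellipticity, Fredholm property and regularity of the BVP~(\ref{eq.2.18}) --- has already been installed in Subsection~\ref{subsec. Elliptic BVP}. On the algebraic side, the two identities $\d_{X_M}^2=0$ and $\delta_{X_M}^2=0$ both hold on $\Omega_G$: the first is Cartan's formula together with $\mathcal{L}_{X_M}\omega=0$ for $\omega\in\Omega_G$, and the second follows from the first after observing that $\star$ preserves $\Omega_G$, since the $G$-action is by orientation-preserving isometries.

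I would first verify the orthogonality $\mathcal{E}_{X_M}\perp\mathcal{C}_{X_M}$ asserted just before the statement of the proposition. For $\alpha\in\mathsf{H^{1}}\Omega_{G,D}$ and $\gamma\in\mathsf{H^{1}}\Omega_{G,N}$, taken smooth by density, applying Green's formula~(\ref{eq.2.16}) to the pair $(\d_{X_M}\alpha,\gamma)$ yields
$$\langle\d_{X_M}^2\alpha,\gamma\rangle \;=\; \langle\d_{X_M}\alpha,\delta_{X_M}\gamma\rangle+\int_{\partial M}i^{*}(\d_{X_M}\alpha\wedge\star\gamma).$$
The left-hand side vanishes by $\d_{X_M}^2=0$, and the boundary integral vanishes from $i^{*}(\star\gamma)=0$. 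Hence $\langle\d_{X_M}\alpha,\delta_{X_M}\gamma\rangle=0$.

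The main obstacle is part (b), the $L^{2}$-closedness of $\mathcal{E}_{X_M}$; closedness of $\mathcal{C}_{X_M}$ will then follow by the Hodge-star isomorphism $\mathcal{E}_{X_M}\cong\mathcal{C}_{X_M}$. Following Schwarz, the heart of the matter is a Poincar\'{e}-type estimate $\|\alpha\|_{\mathsf{H^{1}}}\leq C\,\|\d_{X_M}\alpha\|_{L^{2}}$ valid for all $\alpha\in\mathsf{H^{1}}\Omega_{G,D}$ orthogonal (in $\mathsf{H^{1}}$) to $\ker\d_{X_M}\cap\mathsf{H^{1}}\Omega_{G,D}$. I would establish it by a Rellich-based contradiction: a normalized sequence $\alpha_n$ with $\d_{X_M}\alpha_n\to 0$ in $L^{2}$ has, by Rellich-Kondrachov, an $L^{2}$-convergent subsequence; the elliptic regularity of the BVP~(\ref{eq.2.18}) established in Theorem~\ref{thm.2.5} (together with the associated G{\aa}rding-type inequality) lifts this to $\mathsf{H^{1}}$-convergence, with limit lying both in and perpendicular to $\ker\d_{X_M}$, hence zero, contradicting the normalization. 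With the estimate in hand, any Cauchy sequence $\{\d_{X_M}\alpha_n\}$ in $L^{2}$ lifts (after removing kernel components) to a Cauchy sequence $\{\alpha_n\}$ in $\mathsf{H^{1}}$; the limit $\alpha\in\mathsf{H^{1}}\Omega_{G,D}$ satisfies $\d_{X_M}\alpha=\lim_n\d_{X_M}\alpha_n$, proving $\mathcal{E}_{X_M}$ closed in $L^{2}\Omega_G(M)$.

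With $\mathcal{E}_{X_M}$ and $\mathcal{C}_{X_M}$ closed and mutually orthogonal, $\mathcal{E}_{X_M}\oplus\mathcal{C}_{X_M}$ is a closed subspace of the Hilbert space $L^{2}\Omega_{G}(M)$. Orthogonal projection onto it produces, for each $\omega\in L^{2}\Omega_{G}(M)$, a unique splitting $\omega=\d_{X_M}\alpha_\omega+\delta_{X_M}\beta_\omega+\kappa_\omega$ with $\kappa_\omega\in(\mathcal{E}_{X_M}\oplus\mathcal{C}_{X_M})^{\perp}$, yielding (a) and~(c) simultaneously.
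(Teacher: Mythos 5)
The paper itself omits the proof of this proposition, deferring to \cite{my thesis}, so your argument has to be judged on its own terms. Its overall architecture --- orthogonality of $\mathcal{E}_{X_M}(M)$ and $\mathcal{C}_{X_M}(M)$ via Green's formula~(\ref{eq.2.16}), $L^2$-closedness as the one substantive analytic point, and then Hilbert-space orthogonal projection to obtain (a) and (c) --- is the right one and is consistent with the route the paper indicates (Schwarz's scheme with the elliptic \textsc{bvp}~(\ref{eq.2.18}) replacing Gaffney's inequality). The orthogonality computation and the reduction of (a), (c) to (b) are correct.

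The gap is in your proof of (b). You want the estimate $\|\alpha\|_{\mathsf{H^1}}\leq C\,\|\d_{X_M}\alpha\|_{L^2}$ for $\alpha\in\mathsf{H^1}\Omega_{G,D}$ orthogonal to $\ker\d_{X_M}$, and you propose a compactness--contradiction argument: normalize $\|\alpha_n\|_{\mathsf{H^1}}=1$ with $\d_{X_M}\alpha_n\to 0$, extract an $L^2$-convergent subsequence by Rellich, and then ``lift to $\mathsf{H^1}$-convergence by the elliptic regularity of the \textsc{bvp}~(\ref{eq.2.18}) and the associated G{\aa}rding inequality.'' That last step is unjustified. The elliptic estimates attached to (\ref{eq.2.18}) apply to forms satisfying \emph{both} boundary conditions and control $\|\omega\|_{\mathsf{H^2}}$ by $\|\Delta_{X_M}\omega\|_{L^2}+\|\omega\|_{L^2}$; your $\alpha_n$ satisfy only $i^*\alpha_n=0$, and you control neither $\Delta_{X_M}\alpha_n$ nor $\delta_{X_M}\alpha_n$. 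The coercivity statement you would actually need, $\|\beta\|^2_{\mathsf{H^1}}\leq C(\|\d_{X_M}\beta\|^2+\|\delta_{X_M}\beta\|^2+\|\beta\|^2)$ on $\mathsf{H^1}\Omega_{G,D}$, is exactly the Gaffney inequality for $\d_{X_M},\delta_{X_M}$, which the paper explicitly flags at the start of Section~\ref{subsec. Elliptic BVP} as not known to extend to this setting --- and even granting it, it involves $\delta_{X_M}\alpha_n$, which your sequence does not control. What your argument actually yields is $\alpha_n\rightharpoonup 0$ weakly in $\mathsf{H^1}$ and strongly in $L^2$, which is no contradiction with $\|\alpha_n\|_{\mathsf{H^1}}=1$. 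Note also that the estimate you are after is formally \emph{equivalent} to the closed-range assertion being proved, so some genuinely new analytic input must enter at this point.

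The repair is available from the machinery already installed. Given $\omega\in L^2\Omega_G(M)$, write $\omega=P\omega+\eta$ with $P\omega\in\mathcal{H}_{X_M,D}(M)$ and $\eta\in\mathcal{H}_{X_M,D}(M)^{\perp}$ (Theorem~\ref{tm.2.6'}), and use Theorem~\ref{tm.2.6} and Proposition~\ref{pro.2.7} to find $\sigma\in\mathsf{H^2}\overline{\Omega}_G(M)$ with $\Delta_{X_M}\sigma=\eta$. Since $i^*(\delta_{X_M}\sigma)=0$, the form $\d_{X_M}\delta_{X_M}\sigma$ lies in $\mathcal{E}_{X_M}(M)$, while Green's formula~(\ref{eq.2.16}) gives $\langle P\omega,\d_{X_M}\xi\rangle=0$ and $\langle\delta_{X_M}\d_{X_M}\sigma,\d_{X_M}\xi\rangle=0$ for all $\xi\in\mathsf{H^1}\Omega_{G,D}$ (the latter using $\d_{X_M}^2=0$ and $i^*\d_{X_M}\xi=0$, which follows from $i^*\xi=0$ because $X_M$ is tangent to $\partial M$). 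Hence $\omega-\d_{X_M}\delta_{X_M}\sigma\perp\mathcal{E}_{X_M}(M)$, i.e.\ the orthogonal projection of \emph{every} $\omega$ onto $\overline{\mathcal{E}_{X_M}(M)}$ already lies in $\mathcal{E}_{X_M}(M)$; applying this to $\omega\in\overline{\mathcal{E}_{X_M}(M)}$ gives $\omega\in\mathcal{E}_{X_M}(M)$, which is the closedness. The closedness of $\mathcal{C}_{X_M}(M)$ then follows by the Hodge star, as you say, and the rest of your argument goes through.
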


Now we can present the main theorems for this section; all orthogonality is with respect to the $L^2$ inner product.

\begin{theorem}[$X_M$-Hodge-Morrey decomposition theorem] \quad \label{thm:Hodge-Morrey}
The following is an orthogonal direct sum decomposition:
$$L^{2}\Omega_{G}(M)=\mathcal{E}_{X_M}(M)\oplus
\mathcal{C}_{X_M}(M)\oplus L^{2}\mathcal{H}_{X_{M}}(M)
$$
\end{theorem}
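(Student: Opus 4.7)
The statement refines Proposition~\ref{pro.2.8}(c), which already gives
$$L^{2}\Omega_{G}(M)=\mathcal{E}_{X_M}\oplus\mathcal{C}_{X_M}\oplus(\mathcal{E}_{X_M}\oplus\mathcal{C}_{X_M})^{\perp}$$
as an orthogonal decomposition into three $L^{2}$-closed subspaces. The entire content of the theorem is thus to identify $(\mathcal{E}_{X_M}\oplus\mathcal{C}_{X_M})^{\perp}$ with $L^{2}\mathcal{H}_{X_{M}}(M)$. Both are $L^{2}$-closed, and I would prove the two inclusions separately.

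The inclusion $L^{2}\mathcal{H}_{X_{M}}(M)\subseteq(\mathcal{E}_{X_M}\oplus\mathcal{C}_{X_M})^{\perp}$ is almost immediate from Green's formula. For $\kappa\in\mathcal{H}_{X_M}(M)$ and $\alpha\in\mathsf{H}^{1}\Omega_{G,D}$, equation~(\ref{eq.2.16}) gives
$$\langle\d_{X_M}\alpha,\kappa\rangle=\langle\alpha,\delta_{X_M}\kappa\rangle+\int_{\partial M}i^{*}(\alpha\wedge\star\kappa)=0$$
since $\delta_{X_M}\kappa=0$ and $i^{*}\alpha=0$; symmetrically, $\langle\delta_{X_M}\beta,\kappa\rangle=0$ for $\beta\in\mathsf{H}^{1}\Omega_{G,N}$ using $\d_{X_M}\kappa=0$ and $i^{*}(\star\beta)=0$. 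The orthogonality then passes to $L^{2}$-closures.

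For the reverse inclusion, take $\kappa\in(\mathcal{E}_{X_M}\oplus\mathcal{C}_{X_M})^{\perp}$. Testing against compactly supported invariant smooth forms in $M^{\circ}$---which lie vacuously in both $\mathsf{H}^{1}\Omega_{G,D}$ and $\mathsf{H}^{1}\Omega_{G,N}$---shows that $\d_{X_M}\kappa=0$ and $\delta_{X_M}\kappa=0$ hold distributionally on $M^{\circ}$. Because $\d_{X_M}+\delta_{X_M}$ shares its principal symbol with the elliptic operator $\d+\delta$---the difference being the zeroth-order $\iota_{X_M}\pm\star\iota_{X_M}\star$, as in the computation in the proof of Theorem~\ref{thm.2.5}---interior elliptic regularity gives $\kappa\in C^{\infty}(M^{\circ})$, whereupon $\kappa$ is a classical $X_M$-harmonic field on the open interior.

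The main obstacle is the last step: upgrading this interior smoothness to the global conclusion that $\kappa$ is an $L^{2}$-limit of smooth $X_M$-harmonic fields on all of $M$. My plan is to exhaust $M$ by nested smooth subdomains $M_{n}\Subset M^{\circ}$ with $M_{n}\uparrow M$, on each of which $\kappa$ is smooth and $X_M$-harmonic. A smooth cutoff gives extensions $\tilde{\kappa}_{n}\in\Omega_{G}(M)$ agreeing with $\kappa$ on $M_{n}$, with error $\Delta_{X_M}\tilde{\kappa}_{n}$ supported in the shrinking collar $M\setminus M_{n}$; solving the Dirichlet BVP of Proposition~\ref{pro.2.7} (together with its Neumann counterpart noted in Remarks~\ref{r.8}(2)) against this error yields a correction $r_{n}$ making $\kappa_{n}:=\tilde{\kappa}_{n}-r_{n}$ a genuine smooth $X_M$-harmonic field on $M$, with $\|\kappa-\kappa_{n}\|_{L^{2}}\to0$ by the elliptic estimates of Theorem~\ref{thm.2.5}. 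This parallels the classical Schwarz strategy \cite{Schwarz}, transported to $\d_{X_M},\delta_{X_M}$ via the elliptic-BVP framework of Section~\ref{subsec. Elliptic BVP}.
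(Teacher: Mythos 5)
Your easy inclusion $L^{2}\mathcal{H}_{X_{M}}(M)\subseteq(\mathcal{E}_{X_M}(M)\oplus\mathcal{C}_{X_M}(M))^{\perp}$ via Green's formula is correct, and reducing the converse to interior ellipticity of $\d_{X_M}+\delta_{X_M}$ is sound as far as it goes. The genuine gap is your final approximation step, and it is not a fixable technicality: the scheme proves too much. If $\tilde\kappa_n$ vanishes near $\partial M$ and $r_n$ solves the \textsc{bvp}~(\ref{eq.2.18}) with data $\Delta_{X_M}\tilde\kappa_n$, then $\kappa_n=\tilde\kappa_n-r_n$ satisfies $\Delta_{X_M}\kappa_n=0$, $i^*\kappa_n=0$ and $i^*(\delta_{X_M}\kappa_n)=0$, hence by Proposition~\ref{pro.2.6}(2) lies in $\mathcal{H}_{X_{M},D}(M)$, which is finite-dimensional and closed (Theorem~\ref{tm.2.6'}); the Neumann variant likewise lands in $\mathcal{H}_{X_{M},N}(M)$. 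If $\kappa_n\to\kappa$ in $L^2$ you would conclude $(\mathcal{E}_{X_M}(M)\oplus\mathcal{C}_{X_M}(M))^{\perp}\subseteq\mathcal{H}_{X_{M},D}(M)$, which is false: already for $X=0$ and $1$-forms on a disc the left-hand side is the infinite-dimensional space of $L^2$ harmonic $1$-fields while $\mathcal{H}^1_D=0$. What actually fails is the estimate: $\|\Delta_{X_M}(\chi_n\kappa)\|_{L^2}$ pairs first and second derivatives of $\chi_n$, which blow up on the shrinking collar, against $\kappa$ and its derivatives near $\partial M$, over which you have no control, so the elliptic estimate gives no decay of $r_n$. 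More structurally, a general $X_M$-harmonic field satisfies no boundary condition at all---that is exactly why $\mathcal{H}_{X_M}(M)$ is infinite-dimensional---so no correction by a \textsc{bvp} with homogeneous boundary data can produce the approximants you need.

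The paper omits the proof (deferring to \cite{my thesis}), but the intended route, following Schwarz, runs in the opposite direction: prove the decomposition for smooth $\omega\in\Omega_G(M)$ and then take $L^2$-closures using Proposition~\ref{pro.2.8}(b) and the density of smooth forms. Concretely, write $\omega=\eta_D+\lambda_D$ with $\lambda_D\in\mathcal{H}_{X_{M},D}(M)$ and use Proposition~\ref{pro.2.7} to solve $\Delta_{X_M}\varphi_D=\eta_D$ with $i^*\varphi_D=i^*(\delta_{X_M}\varphi_D)=0$; similarly write $\omega=\eta_N+\lambda_N$ with $\lambda_N\in\mathcal{H}_{X_{M},N}(M)$ and solve the Neumann counterpart (Remarks~\ref{r.8}(2)) for $\varphi_N$. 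Then $\delta_{X_M}\varphi_D\in\mathsf{H^{1}}\Omega_{G,D}$ and $\d_{X_M}\varphi_N\in\mathsf{H^{1}}\Omega_{G,N}$, so $\d_{X_M}\delta_{X_M}\varphi_D\in\mathcal{E}_{X_M}(M)$ and $\delta_{X_M}\d_{X_M}\varphi_N\in\mathcal{C}_{X_M}(M)$, and setting $\kappa:=\omega-\d_{X_M}\delta_{X_M}\varphi_D-\delta_{X_M}\d_{X_M}\varphi_N$ one computes
$$\d_{X_M}\bigl(\delta_{X_M}\d_{X_M}\varphi_N\bigr)=\d_{X_M}\bigl(\Delta_{X_M}\varphi_N\bigr)=\d_{X_M}(\omega-\lambda_N)=\d_{X_M}\omega,$$
whence $\d_{X_M}\kappa=0$, and symmetrically $\delta_{X_M}\kappa=0$. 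Hence every smooth $\omega$ lies in $\mathcal{E}_{X_M}(M)\oplus\mathcal{C}_{X_M}(M)\oplus\mathcal{H}_{X_M}(M)$; this is the step your argument is missing.
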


\begin{theorem}[$X_M$-Friedrichs decomposition theorem]\label{thm:X_M-Friedrichs}
The space $\mathcal{H}_{X_{M}}(M)\subseteq \mathsf{H^{1}}\Omega_{G}(M)$ of\/
$X_{M}$- harmonic fields can respectively be decomposed as orthogonal direct sums into
\begin{eqnarray*}
 \mathcal{H}_{X_{M}}(M)&=& \mathcal{H}_{X_{M},D}(M)\oplus \mathcal{H}_{X_{M},\mathrm{co}}(M)\\ 
  \mathcal{H}_{X_{M}}(M)&=& \mathcal{H}_{X_{M},N}(M)\oplus\mathcal{H}_{X_{M},\mathrm{ex}}(M), 
\end{eqnarray*}
where the right hand terms are the $X_M$-coexact and exact harmonic forms respectively:
\begin{eqnarray*}
  \mathcal{H}_{X_{M},\mathrm{co}}(M) &=& \{ \eta \in \mathcal{H}_{X_{M}}(M)\mid \eta=\delta_{X_{M}}\alpha\}\\ 
  \mathcal{H}_{X_{M},\mathrm{ex}}(M)&=& \{ \xi \in \mathcal{H}_{X_{M}}(M)\mid \xi=\d_{X_{M}}\sigma\}
\end{eqnarray*}
For $L^{2}\mathcal{H}_{X_{M}}(M)$ these decompositions are valid
accordingly.
\end{theorem}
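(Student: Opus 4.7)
The plan is to establish the first decomposition $\mathcal{H}_{X_{M}}(M) = \mathcal{H}_{X_{M},D}(M) \oplus \mathcal{H}_{X_{M},\mathrm{co}}(M)$; the Neumann version then follows by applying the Hodge star operator, which intertwines $\d_{X_{M}}$ with $\pm\delta_{X_{M}}$ and carries Dirichlet conditions to Neumann conditions (cf.\ Remarks~\ref{r.8}), and the $L^{2}$-version follows by taking closures. Orthogonality of the two summands is immediate: if $\omega_D \in \mathcal{H}_{X_{M},D}$ and $\eta = \delta_{X_{M}}\varphi \in \mathcal{H}_{X_{M},\mathrm{co}}$, Green's formula~(\ref{eq.2.16}) gives
$$
\langle \omega_D,\, \delta_{X_{M}}\varphi\rangle = \langle \d_{X_{M}}\omega_D,\, \varphi\rangle - \int_{\partial M} i^{*}(\omega_D \wedge \star\varphi) = 0,
$$
since $\d_{X_{M}}\omega_D = 0$ and $i^{*}\omega_D = 0$.

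For surjectivity of the sum, fix $\omega \in \mathcal{H}_{X_{M}}(M)$. By Theorem~\ref{tm.2.6'}, $\mathcal{H}_{X_{M},D}(M)$ is a finite-dimensional closed subspace of $L^{2}\Omega_{G}(M)$, so I orthogonally project $\omega$ onto it to obtain $\omega_D$ and a remainder $\gamma := \omega - \omega_D$. Then $\gamma$ is still an $X_{M}$-harmonic field and lies in $\mathcal{H}_{X_{M},D}(M)^{\perp} = \mathrm{Range}(A)$ by Theorem~\ref{tm.2.6}. Choose $\sigma \in \mathsf{H}^{2}\overline{\Omega}_{G}(M)$ with $A\sigma = \gamma$, i.e.\ $\Delta_{X_{M}}\sigma = \gamma$, $i^{*}\sigma = 0$, and $i^{*}\delta_{X_{M}}\sigma = 0$. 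Expanding $\Delta_{X_{M}}\sigma$ yields $\gamma = \d_{X_{M}}\beta + \delta_{X_{M}}\alpha$ with $\beta := \delta_{X_{M}}\sigma$ and $\alpha := \d_{X_{M}}\sigma$; note in particular that $i^{*}\beta = 0$.

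The heart of the argument, and the step I expect to be the main obstacle, is showing that the $\d_{X_{M}}$-exact piece $\d_{X_{M}}\beta$ vanishes, so that $\gamma = \delta_{X_{M}}\alpha$ lies in $\mathcal{H}_{X_{M},\mathrm{co}}(M)$. Since $\delta_{X_{M}}\gamma = 0$ and $\delta_{X_{M}}^{2} = 0$, one immediately obtains $\delta_{X_{M}}\d_{X_{M}}\beta = 0$. Applying Green's formula~(\ref{eq.2.16}) with the pair $(\beta,\,\d_{X_{M}}\beta)$ then produces
$$
\|\d_{X_{M}}\beta\|^{2} = \langle \beta,\, \delta_{X_{M}}\d_{X_{M}}\beta\rangle + \int_{\partial M} i^{*}(\beta \wedge \star\d_{X_{M}}\beta) = 0,
$$
the bulk term vanishing by the previous identity and the boundary term vanishing from the Dirichlet trace $i^{*}\beta = 0$. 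It is precisely the boundary conditions $i^{*}\sigma = 0$ and $i^{*}\delta_{X_{M}}\sigma = 0$ baked into the domain of $A$, together with the existence of the potential $\sigma$ provided by the elliptic theory of Section~\ref{subsec. Elliptic BVP}, that make this vanishing work; this parallels the untwisted Friedrichs decomposition in Schwarz~\cite{Schwarz} and explains why the authors defer to that source for the detailed bookkeeping.
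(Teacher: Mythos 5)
Your proof is correct, and it reconstructs exactly the argument the paper has in mind: the paper itself omits the proof of Theorem~\ref{thm:X_M-Friedrichs}, deferring to \cite{my thesis} and to the Schwarz template, but the ingredients it sets up for this purpose (Theorem~\ref{tm.2.6'}, Theorem~\ref{tm.2.6}, Green's formula~(\ref{eq.2.16})) are precisely the ones you use. Splitting $\omega$ as $\omega_D+\gamma$ with $\gamma\in\mathcal{H}_{X_M,D}(M)^\perp=\mathrm{Range}(A)$, writing $\gamma=\d_{X_M}\delta_{X_M}\sigma+\delta_{X_M}\d_{X_M}\sigma$, and killing the exact piece via $\delta_{X_M}\d_{X_M}\beta=0$ together with $i^*\beta=0$ is the intended adaptation of Schwarz's proof of the classical Friedrichs decomposition.

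One small point worth tightening: Green's formula~(\ref{eq.2.16}) is stated for $\mathsf{H}^1$ forms, while for $\sigma\in\mathsf{H}^2\overline{\Omega}_G(M)$ the form $\d_{X_M}\beta=\d_{X_M}\delta_{X_M}\sigma$ is a priori only in $L^2$. This is repaired by elliptic regularity for the \textsc{bvp}~(\ref{eq.2.18}) (Theorem~\ref{thm.2.5}): since $\gamma=\omega-\omega_D\in\mathsf{H}^1\Omega_G$, one gets $\sigma\in\mathsf{H}^3$ and hence $\d_{X_M}\beta\in\mathsf{H}^1$, so the integration by parts is legitimate. With that remark, the argument, the reduction of the Neumann version via $\star$, and the passage to $L^2$-closures (using that $\mathcal{H}_{X_M,D}(M)$ is finite dimensional, hence closed) are all sound.
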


Combining Theorems~\ref{thm:Hodge-Morrey} and \ref{thm:X_M-Friedrichs} gives the following.

\begin{corollary}[The $X_M$-Hodge-Morrey-Friedrichs decompositions]\label{co.2.6}
The space $ L^{2}\Omega_{G}(M)$ can be decomposed into $L^{2}$-orthogonal direct sums as follows:
\begin{eqnarray*}
  L^{2}\Omega_{G}(M) &=&\mathcal{E}_{X_M}(M)\oplus \mathcal{C}_{X_M}(M)\oplus
\mathcal{H}_{X_{M},D}(M)\oplus L^{2}\mathcal{H}_{X_{M},\mathrm{co}}(M) \\
  L^{2}\Omega_{G}(M) &=&\mathcal{E}_{X_M}(M)\oplus \mathcal{C}_{X_M}(M)\oplus
\mathcal{H}_{X_{M},N}(M)\oplus L^{2}\mathcal{H}_{X_{M},\mathrm{ex}}(M)
\end{eqnarray*}
\end{corollary}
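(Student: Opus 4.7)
The strategy is to substitute the Friedrichs decompositions of Theorem~\ref{thm:X_M-Friedrichs} into the harmonic-field summand of the Hodge-Morrey decomposition of Theorem~\ref{thm:Hodge-Morrey}, passing to $L^2$-closures where needed. The corollary is essentially a bookkeeping refinement, so I would proceed in three short steps.

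Concretely, I would start from the Hodge-Morrey decomposition
\[
L^{2}\Omega_{G}(M) = \mathcal{E}_{X_M}(M) \oplus \mathcal{C}_{X_M}(M) \oplus L^{2}\mathcal{H}_{X_{M}}(M),
\]
then invoke the Dirichlet form of Friedrichs, $\mathcal{H}_{X_M}(M) = \mathcal{H}_{X_M,D}(M) \oplus \mathcal{H}_{X_M,\mathrm{co}}(M)$, at the smooth level and take $L^2$-closures of both sides. By Theorem~\ref{tm.2.6'} the space $\mathcal{H}_{X_M,D}(M)$ is finite-dimensional, hence already $L^2$-closed; and the orthogonality of the two Friedrichs summands is preserved under closure by continuity of the inner product. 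Thus
\[
L^{2}\mathcal{H}_{X_M}(M) = \mathcal{H}_{X_M,D}(M) \oplus L^{2}\mathcal{H}_{X_M,\mathrm{co}}(M)
\]
as an orthogonal direct sum, and substitution into Hodge-Morrey yields the first claimed decomposition.

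The second decomposition follows identically from the Neumann form of Theorem~\ref{thm:X_M-Friedrichs}, or equivalently by applying the Hodge star $\star$ to the first and using Remarks~\ref{r.8}(2): the star interchanges $\mathcal{E}_{X_M}$ with $\mathcal{C}_{X_M}$, $\mathcal{H}_{X_M,D}$ with $\mathcal{H}_{X_M,N}$, and $\mathcal{H}_{X_M,\mathrm{co}}$ with $\mathcal{H}_{X_M,\mathrm{ex}}$. There is essentially no substantive obstacle: the only point deserving a moment's care is the passage to $L^2$-closure, but this is automatic, since the sum of two mutually orthogonal closed subspaces of a Hilbert space is closed, and finite-dimensionality of $\mathcal{H}_{X_M,D}$ (resp.\ $\mathcal{H}_{X_M,N}$) means no closure is needed on that summand at all.
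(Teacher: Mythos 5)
Your proposal is correct and follows exactly the route the paper intends: the paper states the corollary as the immediate combination of Theorem~\ref{thm:Hodge-Morrey} with Theorem~\ref{thm:X_M-Friedrichs} (whose statement already notes that the Friedrichs splittings remain valid for $L^{2}\mathcal{H}_{X_M}(M)$), and your added care about $L^2$-closures --- finite-dimensionality of $\mathcal{H}_{X_M,D}$ and closedness of an orthogonal sum of closed subspaces --- is precisely the bookkeeping the paper leaves implicit.
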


\begin{remark}
All the results above can be refined in terms of $\pm$-spaces,
for instance, 
$$\mathcal{H}^\pm_{X_{M},D}(M)\cong
\mathcal{H}^{n-\pm}_{X_{M},N}(M), \quad L^{2}\Omega^\pm_{G}(M)
=\mathcal{E}^\pm_{X_M}(M)\oplus \mathcal{C}^\pm_{X_M}(M)\oplus
\mathcal{H}^\pm_{X_{M},D}(M)\oplus
L^{2}\mathcal{H}^\pm_{X_{M},\mathrm{co}}(M)$$ 
\dots etc.
\end{remark}

\subsection{Relative and absolute $X_M$-cohomology }

Using $\d_{X_M}$ and $\delta_{X_M}$ we can form a number of
$\ZZ_2$-graded complexes. A $\ZZ_2$-graded complex is a pair of
Abelian groups $C^\pm$ with  homomorphisms between them:
\begin{center}
\begin{pspicture}(-2,-0.5)(2,0.5)
\rput(-1.3,0.05){$C^+$}
\rput(1.3,0.05){$C^-$}
\rput(0,0.4){$\d_+$} \rput(0,-0.4){$\d_-$}
\psline[arrowscale=1.5]{->}(-0.8,0.1)(0.8,0.1) \psline[arrowscale=1.5]{->}(0.8,-0.1)(-0.8,-0.1)\end{pspicture}
\end{center}
satisfying $\d_+\circ\d_- = 0 = \d_-\circ\d_+$.  The two (co)homology groups of such a complex are defined in the obvious way: $H^\pm = \ker\d_\pm/\im\d_\mp$.
The complexes we have in mind are,
\begin{eqnarray*}
(\Omega_G^\pm,\d_{X_M})&\quad&(\Omega_G^\pm,\delta_{X_M})\\
(\Omega_{G,D}^\pm,\d_{X_M}) && (\Omega_{G,N}^\pm,\delta_{X_M}).
\end{eqnarray*}
The two on the lower line are subcomplexes of the corresponding
upper ones, because $i^*$ commutes with $\d_{X_M}$. By analogy with the de Rham groups, we denote
$$H^\pm_{X_M}(M) := H^\pm(\Omega_G,\,\d_{X_M}) \quad \mbox{and} \quad
H^\pm_{X_M}(M,\,\partial M) := H^\pm(\Omega_{G,D},\,\d_{X_M}).$$

The decomposition theorems above lead to the following result.
\begin{theorem}[$X_{M}$-Hodge Isomorphism]\label{thm:X_M-Hodge}
Let $X\in\gg$.
There are the following isomorphisms of vector spaces:
\begin{enumerate}
\item[(a)] $H^{\pm}_{X_M}(M,\,\partial M) \cong \mathcal{H}^\pm_{X_{M},D}(M) \cong H^\pm(\Omega_G^\pm,\delta_{X_{M}})$;

\item[(b)]  $H^{\pm}_{X_M}(M)\cong \mathcal{H}^\pm_{X_{M},N}(M) \cong H^\pm(\Omega_{G,N}^\pm,\delta_{X_{M}})$;

\item[(c)]($X_{M}$-Poincar\'e-Lefschetz duality): The Hodge star operator $\star$  on $\Omega_{G}(M)$ induces an isomorphism
    $$H^\pm_{X_{M}}(M)\cong H^{n-\pm}_{X_{M}}(M,\,\partial M).$$
\end{enumerate}
\end{theorem}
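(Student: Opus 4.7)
The overall strategy is to use the $X_M$-Hodge-Morrey-Friedrichs decomposition (Corollary~\ref{co.2.6}) to realize each $X_M$-cohomology class by a unique harmonic field of the required type. The natural map sending a harmonic field to its $\d_{X_M}$- or $\delta_{X_M}$-cohomology class is well-defined because harmonic fields are both $\d_{X_M}$- and $\delta_{X_M}$-closed; I need to show these maps are surjective (every class has a harmonic-field representative) and injective (a harmonic field that is also a coboundary must vanish).

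For the first isomorphism in (a), I would start with $\omega \in \Omega^{\pm}_{G,D}$ satisfying $\d_{X_M}\omega = 0$ and write
$$\omega = \d_{X_M}\alpha + \delta_{X_M}\beta + h_D + h_{\mathrm{co}}$$
using the first decomposition of Corollary~\ref{co.2.6}, with $\alpha \in \mathsf{H}^1\Omega_{G,D}$, $\beta \in \mathsf{H}^1\Omega_{G,N}$, $h_D \in \mathcal{H}_{X_M,D}$ and $h_{\mathrm{co}} \in L^{2}\mathcal{H}_{X_M,\mathrm{co}}$. To eliminate $\delta_{X_M}\beta$, apply $\d_{X_M}$ (which annihilates $\omega$ and both harmonic pieces) and pair the result with $\beta$; Green's formula (\ref{eq.2.16}), together with $i^*(\star\beta)=0$, yields $\|\delta_{X_M}\beta\|^{2}=0$. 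To eliminate $h_{\mathrm{co}}$, observe that $\omega$, $\d_{X_M}\alpha$ and $h_D$ are all Dirichlet (using that $i^*$ commutes with $\d_{X_M}$, since $X_M$ is tangent to $\partial M$); thus $h_{\mathrm{co}}$ is a Dirichlet coexact harmonic field, and the Friedrichs splitting $\mathcal{H}_{X_M} = \mathcal{H}_{X_M,D} \oplus \mathcal{H}_{X_M,\mathrm{co}}$ forces $h_{\mathrm{co}}=0$. Hence $\omega = \d_{X_M}\alpha + h_D$ and $[\omega]=[h_D]$, which is surjectivity. Injectivity follows from Green's formula once more: if $h_D = \d_{X_M}\alpha$ with $\alpha \in \Omega_{G,D}$, then
$$\|h_D\|^{2} = \langle \d_{X_M}\alpha,\, h_D\rangle = \langle \alpha,\, \delta_{X_M}h_D\rangle + \int_{\partial M} i^*(\alpha \wedge \star h_D) = 0,$$
because $\delta_{X_M}h_D=0$ and $i^*\alpha = 0$. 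The second isomorphism in (a) is parallel, using the \emph{second} decomposition of Corollary~\ref{co.2.6}: pair $\delta_{X_M}\omega=0$ against $\alpha$ to force $\d_{X_M}\alpha=0$ (the Dirichlet condition $i^*\alpha=0$ killing the boundary term), and then absorb $\mathcal{H}_{X_M,\mathrm{co}}$ into the $\delta_{X_M}$-coboundary, since its elements are by definition of the form $\delta_{X_M}\tau$.

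Part (b) is the symmetric statement, with Dirichlet$\leftrightarrow$Neumann and exact$\leftrightarrow$coexact interchanged; the argument uses the second Friedrichs decomposition and the fact that $\delta_{X_M}$ preserves Neumann conditions, which follows from $\star\delta_{X_M} = \pm\d_{X_M}\star$ together with the tangency of $X_M$ to $\partial M$. Part (c) is then a two-step identification: combining (b), the isomorphism $\star: \mathcal{H}^{n-\pm}_{X_M,D} \to \mathcal{H}^{\pm}_{X_M,N}$ already noted before Section~\ref{sec:W-H with boundary}, and (a) gives
$$H^{\pm}_{X_M}(M) \cong \mathcal{H}^{\pm}_{X_M,N} \stackrel{\star^{-1}}{\cong} \mathcal{H}^{n-\pm}_{X_M,D} \cong H^{n-\pm}_{X_M}(M,\partial M).$$

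The main obstacle I anticipate is careful bookkeeping of the boundary contributions in Green's formula. Because $\d_{X_M}$ and $\delta_{X_M}$ are inhomogeneous, every pairing mixes parities and every integration by parts produces a boundary integral that must be forced to vanish by choosing one factor to satisfy the correct Dirichlet or Neumann condition. Making all these cancellations work systematically requires repeated use of the two facts that $\d_{X_M}$ preserves Dirichlet and $\delta_{X_M}$ preserves Neumann conditions, both of which ultimately rest on the tangency of $X_M$ to $\partial M$. Once this compatibility is established, the remaining steps are direct parallels of the classical Hodge-Morrey-Friedrichs argument.
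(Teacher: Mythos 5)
Your proposal is correct and follows essentially the same route as the paper: both arguments feed an $X_M$-closed (resp.\ $\delta_{X_M}$-closed) form into the $X_M$-Hodge--Morrey(--Friedrichs) decompositions, kill the coexact (resp.\ exact) component and the wrong harmonic piece using Green's formula~(\ref{eq.2.16}) and the boundary conditions, and obtain (c) by composing (a), (b) with the Hodge star isomorphism $\mathcal{H}^{n-\pm}_{X_M,D}\cong\mathcal{H}^{\pm}_{X_M,N}$. The only differences are cosmetic: you use the four-term decomposition of Corollary~\ref{co.2.6} where the paper uses the three-term decomposition of Theorem~\ref{thm:Hodge-Morrey} for the first isomorphism in (a), and you spell out the injectivity check that the paper leaves implicit.
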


\begin{proof} The proofs use the decomposition theorems above.
For the first isomorphism in (a), Theorem~\ref{thm:Hodge-Morrey} (the
$X_M$-Hodge-Morrey decomposition theorem) implies a unique splitting
of any $\gamma \in \Omega^\pm_{G,D}$ into,
$$\gamma=\d_{X_{M}} \alpha_{\gamma}+\delta_{X_{M}} \beta_{\gamma} + \kappa_{\gamma}$$
where $\d_{X_{M}} \alpha_{\gamma} \in \mathcal{E}^\pm_{X_M}(M)$,
$\delta_{X_{M}} \beta_{\gamma} \in \mathcal{C}^\pm_{X_M}(M)$ and
$\kappa_{\gamma} \in L^{2}\mathcal{H}^\pm_{X_{M}}(M)$. If
$\d_{X_{M}}\gamma=0$ then $\delta_{X_{M}} \beta_{\gamma}=0$, but
$i^{*}\gamma =0$ implies $i^{*}(\kappa_{\gamma})=0$ so that
$\kappa_{\gamma} \in \mathcal{H}^\pm_{X_{M},D}(M)$. Thus,
$$\gamma \in \ker \d_{X_{M}}\restrict{\Omega_{G,D}} \Longleftrightarrow
\gamma=\d_{X_{M}} \alpha_{\gamma}+\kappa_{\gamma}.$$
This establishes the isomorphism $H^\pm_{X_{M}}(M,\,\partial M) \cong \mathcal{H}^\pm_{X_{M},D}(M)$.

For the second isomorphism in (a), the second $X_M$-Hodge-Morrey-Friedrichs
decomposition of Corollary \ref{co.2.6} implies as
well  a unique splitting of any $\gamma \in \Omega^\pm_{G}(M)$ into,
$$\gamma=\d_{X_{M}}\xi_{\gamma} + \delta_{X_{M}}\eta_{\gamma} +\delta_{X_{M}}\zeta_{\gamma} + \lambda_{\gamma}$$
where $\d_{X_{M}}\xi_{\gamma} \in \mathcal{E}^\pm_{X_M}(M)\,$,
$\;\delta_{X_{M}} \eta_{\gamma} \in \mathcal{C}^\pm_{X_M}(M)\,$,
$\;\delta_{X_{M}} \zeta_{\gamma} \in
L^{2}\mathcal{H}^\pm_{X_{M},\mathrm{co}}(M)$ and $\lambda_{\gamma}
\in \mathcal{H}^\pm_{X_{M},D}(M)$.

If $\delta_{X_{M}}\gamma=0$, then $\d_{X_{M}}\xi_{\gamma}=0$, and hence
$$\gamma \in \ker \delta_{X_{M}}\Longleftrightarrow  \gamma=\delta_{X_{M}} (\eta_\gamma+\zeta_\gamma) + \lambda_{\gamma}.$$
This establishes the isomorphism $\mathcal{H}^\pm_{X_{M},D}(M) \cong H^\pm_{X_{M}}(\Omega_G^\pm,\delta_{X_{M}})$.

Part (b) is proved similarly, and part (c) follows from (a) and (b) and the fact that the Hodge star operator defines an isomorphism $\mathcal{H}^\pm_{X_M,D}(M)\cong \mathcal{H}^{n-\pm}_{X_M,N}(M)$.
\end{proof}

The theorem of Hodge is often quoted as saying that every (de Rham) cohomology class on a compact Riemannian manifold without boundary contains a unique harmonic form.  The corresponding statement for $X_M$-cohomology on a manifold with boundary is,

\begin{corollary}\label{coro.Hodge is often quoted}
Each absolute $X_M$-cohomology class contains a unique Neumann $X_M$-harmonic
field, and each relative $X_M$-cohomology class contains a unique
Dirichlet $X_M$-harmonic field.
\end{corollary}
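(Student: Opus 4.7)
The plan is to derive this corollary directly from Theorem \ref{thm:X_M-Hodge} (the $X_M$-Hodge isomorphism) together with the orthogonality of summands in Corollary \ref{co.2.6} (the $X_M$-Hodge-Morrey-Friedrichs decomposition). Concretely, I would argue that the natural linear map $\mathcal{H}^\pm_{X_M,D}(M)\to H^\pm_{X_M}(M,\partial M)$ sending $\kappa\mapsto[\kappa]$ is a bijection, and similarly for the absolute/Neumann pairing; the corollary is then literally the statement that these maps are bijective. The map is well-defined because any $X_M$-harmonic field satisfies $\d_{X_M}\kappa=0$, so it represents a cohomology class, and in the Dirichlet case it also satisfies $i^*\kappa=0$.

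For uniqueness I would argue via orthogonality. Given two Dirichlet $X_M$-harmonic fields $\kappa_1,\kappa_2\in\mathcal{H}^\pm_{X_M,D}(M)$ lying in the same relative $X_M$-cohomology class, their difference equals $\d_{X_M}\alpha$ for some $\alpha\in\Omega^\mp_{G,D}$; hence $\kappa_1-\kappa_2\in\mathcal{E}_{X_M}(M)\cap\mathcal{H}^\pm_{X_M,D}(M)$. But Corollary \ref{co.2.6} exhibits $\mathcal{E}_{X_M}(M)$ and $\mathcal{H}^\pm_{X_M,D}(M)$ as mutually orthogonal summands of $L^{2}\Omega_{G}(M)$, forcing $\kappa_1=\kappa_2$. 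The Neumann case is identical, using $\mathcal{C}_{X_M}(M)\perp\mathcal{H}^\pm_{X_M,N}(M)$ from the second decomposition of Corollary \ref{co.2.6}.

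For existence I would start with an arbitrary smooth $\gamma\in\Omega^\pm_{G,D}$ satisfying $\d_{X_M}\gamma=0$ and invoke Theorem \ref{thm:Hodge-Morrey} to write
\[
\gamma=\d_{X_M}\alpha+\delta_{X_M}\beta+\kappa,
\]
with $\alpha\in\mathsf{H}^{1}\Omega_{G,D}$, $\beta\in\mathsf{H}^{1}\Omega_{G,N}$, and $\kappa\in L^{2}\mathcal{H}_{X_M}(M)$. Applying $\d_{X_M}$ and using orthogonality of the three summands forces $\delta_{X_M}\beta=0$, while $i^*\gamma=0$ combined with $i^*(\d_{X_M}\alpha)=\d(i^*\alpha)=0$ forces $i^*\kappa=0$, so $\kappa\in\mathcal{H}^\pm_{X_M,D}(M)$. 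Hence $[\gamma]=[\kappa]$ in relative $X_M$-cohomology. The Neumann/absolute case follows either by running the same argument against the second decomposition of Corollary \ref{co.2.6}, or by transporting through the Hodge star isomorphism $\mathcal{H}^\pm_{X_M,D}(M)\cong\mathcal{H}^{n-\pm}_{X_M,N}(M)$ and the Poincar\'e-Lefschetz duality in part (c) of Theorem \ref{thm:X_M-Hodge}.

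The main obstacle I anticipate is a book-keeping one rather than a conceptual one: ensuring that the primitive $\alpha$ produced in the existence step genuinely lies in smooth invariant Dirichlet forms, so that the identity $\gamma-\kappa=\d_{X_M}\alpha$ realises $\gamma$ and $\kappa$ as cohomologous in the smooth complex $(\Omega^\pm_{G,D},\d_{X_M})$ rather than only after passage to an $L^{2}$-completion. This will be handled by the elliptic regularity statement in Theorem \ref{thm.2.5}(3), applied exactly as in the proof of Theorem \ref{thm:X_M-Hodge}; no analytic input beyond what has already been developed should be required.
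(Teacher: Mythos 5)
Your proposal is correct and follows essentially the same route as the paper: the corollary is there read off from Theorem \ref{thm:X_M-Hodge}(a),(b), whose proof uses exactly the Hodge--Morrey splitting of an $X_M$-closed Dirichlet (resp.\ Neumann) form and the orthogonality of $\mathcal{E}_{X_M}(M)$ (resp.\ $\mathcal{C}_{X_M}(M)$) to $\mathcal{H}^\pm_{X_M,D}(M)$ (resp.\ $\mathcal{H}^\pm_{X_M,N}(M)$) that you spell out for existence and uniqueness. Your closing remark about regularity of the primitive is the right caveat and is handled, as you say, by Theorem \ref{thm.2.5}(3).
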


\section{Relation with equivariant cohomology}
\label{sec:equivariant cohomology}

When the manifold in question has no boundary, Atiyah and Bott
\cite{AB} discuss the relationship between equivariant cohomology
and $X_M$-cohomology by using their localization theorem.  In this
section we will relate our relative and absolute $X_M$-cohomology
with the relative and absolute equivariant cohomology
$H_{G}^\pm(M,\partial M)$ and $H_{G}^\pm(M)$; the arguments are no
different to the ones in \cite{AB}. First we recall briefly the
basic definitions of equivariant cohomology, and the relevant
localization theorem, and then state the conclusions for the
relative and absolute $X_M$-cohomology.

If a torus $G$ acts on a manifold $M$ (with or without boundary), the Cartan model for the equivariant cohomology is defined as follows. Let $\{X_1,\dots,X_\ell\}$ be a basis of $\gg$ and $\{u_1,\dots,u_\ell\}$ the corresponding coordinates. The \emph{Cartan complex} consists of polynomial\footnote{we use real valued polynomials, though complex valued ones works just as well, and all tensor products are thus over $\RR$, unless stated otherwise} maps from $\gg$ to the space of invariant differential forms, so is equal to $\Omega^*_G(M)\otimes R$ where $R=\RR[u_1,\dots,u_\ell]$, with differential
$$\d_{\mathrm{eq}}(\omega) = \d\omega + \sum_{j=1}^\ell u_j\,\iota_{X_j}\omega.$$
The equivariant cohomology $H_G^*(M)$ is the cohomology of this complex. The relative equivariant cohomology $H_G^*(M,\partial M)$ (if $M$ has non-empty boundary) is formed by taking the subcomplex with forms that vanish on the boundary $i^*\omega=0$, with the same differential.

The cohomology groups are graded by giving the $u_i$ weight 2 and a $k$-form weight $k$, so the differential $\d_{\mathrm{eq}}$ is of degree 1. Furthermore,  as the cochain groups are $R$-modules, and $\d_{\mathrm{eq}}$ is a homomorphism of $R$-modules, it follows that the equivariant cohomology is an $R$-module.
The localization theorem of Atiyah and Bott \cite{AB} gives information on the module structure (there it is only stated for absolute cohomology, but it is equally true in the relative setting, with the same proof; see also Appendix C of \cite{Guillemin}).

First we define the following subset of $\gg$,
$$Z := \bigcup_{\widehat{K}\subsetneq G}\mathfrak{k}$$
where the union is over proper isotropy subgroups $\widehat{K}$ (and
$\mathfrak{k}$ its Lie algebra) of the action on $M$. If $M$ is
compact, then $Z$ is a finite union of proper subspaces of $\gg$.
Let $F = \mathrm{Fix}(G,M)=\{x \in M \mid G\cdot x=x\}$ be the set
of fixed points in $M$. It follows from the local structure of group
actions that $F$ is a submanifold of $M$, with boundary $\partial F
= F\cap \partial M$.

\begin{theorem}[Atiyah-Bott \protect{\cite[Theorem 3.5]{AB}}]\label{localization them.} The inclusion $j:F\hookrightarrow M$ induces homomorphisms of $R$-modules
$$H_G^*(M) \stackrel{j^*}{\longrightarrow} H_G^*(F)$$
$$H_G^*(M,\partial M) \stackrel{j^*}{\longrightarrow} H_G^*(F,\partial F)$$
whose kernel and cokernel have support in $Z$.
\end{theorem}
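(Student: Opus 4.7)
The plan is to follow the original Atiyah--Bott strategy, which reduces the statement to a computation on a single orbit and then assembles the global result via Mayer--Vietoris. The proof for the relative version runs in parallel with the absolute one, differing only in the appearance of boundary terms.

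First I would carry out the single-orbit calculation. For a proper closed subgroup $H\subsetneq G$ with Lie algebra $\mathfrak{h}\subsetneq\gg$, the equivariant cohomology of a homogeneous space satisfies $H_G^*(G/H)\cong H_H^*(\mathrm{pt})=\mathrm{Sym}(\mathfrak{h}^*)$, viewed as an $R$-module via the restriction morphism $R=\mathrm{Sym}(\gg^*)\to\mathrm{Sym}(\mathfrak{h}^*)$ dual to the inclusion $\mathfrak{h}\hookrightarrow\gg$. Any polynomial $p\in R$ vanishing on $\mathfrak{h}$ therefore annihilates $H_G^*(G/H)$, so this module is supported on $\mathfrak{h}\subseteq Z$.

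Next I would use the equivariant slice theorem (which applies equally to $G$-manifolds with boundary, since $G$ preserves $\partial M$ and one can take invariant collar neighborhoods) to cover the compact set $M\setminus F$ by finitely many $G$-invariant open sets $U_1,\dots,U_m$, each equivariantly deformation retracting onto an orbit $G/H_i$ with $H_i\subsetneq G$. Then a Mayer--Vietoris induction on $m$, combined with the single-orbit computation, shows that the $R$-modules
\[
H_G^*(M\setminus F),\qquad H_G^*\bigl(\partial M\setminus\partial F\bigr),\qquad H_G^*\bigl(M\setminus F,\;\partial M\setminus\partial F\bigr)
\]
are all supported in $Z$: at each stage, the outer terms of the Mayer--Vietoris sequence are supported in $Z$ by inductive hypothesis and Step~1, and localizing at a prime outside $Z$ is exact, so the middle term inherits the same property.

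Finally I would feed this into the long exact sequences. Pick a $G$-invariant open tubular neighborhood $V$ of $F$ that equivariantly deformation retracts onto $F$, with $V\cap\partial M$ retracting onto $\partial F$; by excision $H_G^*(V)\cong H_G^*(F)$ and $H_G^*(V,V\cap\partial M)\cong H_G^*(F,\partial F)$. The long exact sequence of the pair $\bigl(M,\,M\setminus F\bigr)$, respectively the relative version involving $\partial M$, exhibits the kernel and cokernel of $j^*$ as subquotients of the modules listed above, and Step~3 then forces them to have support in $Z$.

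The main obstacle is Step~2, specifically ensuring that the equivariant slice/tube construction, and the resulting Mayer--Vietoris exact sequences, work cleanly in the relative setting when the orbit passes through $\partial M$. One must check that the subcomplex $\bigl(\Omega_{G,D}\otimes R,\d_{\mathrm{eq}}\bigr)$ of $\bigl(\Omega_G\otimes R,\d_{\mathrm{eq}}\bigr)$ admits Mayer--Vietoris for $G$-invariant covers that respect $\partial M$; granting this, the argument is formally identical to the absolute case in \cite{AB}, which is exactly the remark the authors invoke when they say ``the same proof'' works.
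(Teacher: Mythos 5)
Your proposal is essentially the Atiyah--Bott proof that the paper simply cites (the paper gives no argument of its own beyond remarking that the relative case follows by ``the same proof''), so it matches the intended approach: single-orbit computation, Mayer--Vietoris over invariant tubes, then the exact sequence comparing $M$ with a neighborhood of $F$. The one slip is that $M\setminus F$ is open, not compact; the standard fix is to cover the compact complement of an open invariant tubular neighborhood of $F$ by finitely many orbit tubes (or, equivalently, to apply the tube argument to the compact, fixed-point-free sphere bundle of the normal bundle of $F$) and then run your Steps~2 and~3 verbatim, exactly as in \cite{AB}.
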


In particular, this means that if $f\in I(Z)$ (the ideal in $R$ of polynomials vanishing on $Z$) then the localizations\footnote{The localized ring $R_f$ consists of elements of $R$ divided by a power of $f$ and if $K$ is an $R$-module, its localization is $K_f:=K\otimes_R R_f$; they correspond to restricting to the open set where $f$ is non-zero. See the notes by Libine \cite{Libine} for a good discussion of localization in this context.} $H_G^*(M)_f$ and  $H_G^*(F)_f$ are isomorphic $R_f$-modules. Notice that the act of localization destroys the integer grading of the cohomology, but since the $u_i$ have weight 2, it preserves the parity of the grading, so that the separate even and odd parts are maintained: $H_G^\pm(M)_f \cong H_G^\pm(F)_f$. The same reasoning applies to the cohomology relative to the boundary, so $H_G^\pm(M,\partial M)_f \cong H_G^\pm(F,\partial F)_f$

Since the action on $F$ is trivial, it is immediate from the definition that there is an isomorphism of $R$-modules, $H_G^*(F)\cong H^*(F)\otimes R$ so that the localization theorem shows $j^*$ induces an isomorphism of $R_f$-modules,
\begin{equation} \label{eq:localized j^* isomorphism}
H_G^\pm(M)_f \stackrel{j^*}{\longrightarrow} H^\pm(F) \otimes R_f.
\end{equation}
It follows that $H_G^\pm(M)_f$ is a free $R_f$ module whenever $f\in
I(Z)$. Of course, analogous statements hold for the relative
versions. Since localization does not alter the rank of a module (it
just annihilates torsion elements), we have that
$$\rank H_G^\pm(M) = \dim H^\pm(F),\qquad \rank H_G^\pm(M,\partial M) = \dim H^\pm(F,\partial F).$$

For $X\in\gg$, define $N(X_M) = \{x\in M\mid X_M(x)=0\}$, the set of
zeros of the vector field $X_M$.  Since $X$ generates a torus action, $N(X_M)$ is a manifold with boundary $\partial N(X_M) = N(X_M)\cap \partial M$.  Clearly $N(X_M)\supset F$, and $N(X_M)= F$ if and only if $X\not\in Z$.

\begin{theorem}\label{relative equivariant}
Let  $X = \sum_j s_jX_j\in \gg$.  If the set of zeros of the
corresponding vector field $X_M$ is equal to the fixed point set for
the $G$-action (i.e. $N(X_M)=F$) then
\begin{equation}\label{eq.relative iso1.}
H_{X_M}^\pm(M,\,\partial M)\cong H_{G}^\pm(M,\partial
M)/\mathfrak{m}_X H_{G}^\pm(M,\partial M),
\end{equation}
and
\begin{equation}\label{eq.relative iso2.}
    H_{X_M}^\pm(M)\cong H_{G}^\pm(M)/\mathfrak{m}_X H_{G}^\pm(M)
\end{equation}
where $\mathfrak{m}_X = \left<u_1-s_1,\dots,u_l-s_l\right>$ is the ideal of polynomials vanishing at $X$.
\end{theorem}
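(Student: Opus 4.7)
The plan is to follow Atiyah and Bott: introduce the evaluation chain map from the Cartan complex to the $X_M$-complex, and combine the relative localization theorem with a universal-coefficient argument. Concretely, evaluation at $X = \sum_j s_j X_j$ defines a chain map
$$\mathrm{ev}_X : (\Omega_G^*(M) \otimes R,\, \d_{\mathrm{eq}}) \longrightarrow (\Omega_G^*(M),\, \d_{X_M}), \quad \omega \otimes p \mapsto p(s_1,\dots,s_\ell)\,\omega,$$
whose kernel is $\mathfrak{m}_X \cdot (\Omega_G^*(M) \otimes R)$ and which commutes with $i^*$, hence preserves Dirichlet boundary conditions. It therefore descends to linear maps
$$\overline{\mathrm{ev}_X}: H_G^\pm(M, \partial M)/\mathfrak{m}_X H_G^\pm(M, \partial M) \longrightarrow H_{X_M}^\pm(M, \partial M),$$
and similarly in the absolute case. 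The task is to show these maps are isomorphisms.

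The hypothesis $N(X_M) = F$ is equivalent to $X \notin Z$: for $x \notin F$ the isotropy algebra $\mathfrak{k}_x$ is proper, and $x \in N(X_M)$ iff $X \in \mathfrak{k}_x$. We may therefore choose $f \in I(Z)$ with $f(X) \neq 0$, equivalently $f \notin \mathfrak{m}_X$. Applying the relative version of Theorem~\ref{localization them.} yields the $R_f$-module isomorphism
$$H_G^\pm(M, \partial M)_f \,\cong\, H^\pm(F, \partial F) \otimes R_f,$$
which is in particular free over $R_f$; the analogous statement holds in the absolute setting.

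Set $C^*_f := (\Omega_G^*(M) \otimes R_f,\, \d_{\mathrm{eq}})$, with Dirichlet conditions imposed in the relative case. This is a complex of free $R_f$-modules whose cohomology $H_G^\pm(M, \partial M)_f$ is free over $R_f$. All Tor obstructions therefore vanish, and the standard universal-coefficient comparison map
$$H^\pm(C^*_f) \otimes_{R_f} N \;\longrightarrow\; H^\pm(C^*_f \otimes_{R_f} N)$$
is an isomorphism for every $R_f$-module $N$. Taking $N = R_f/\mathfrak{m}_X R_f \cong \RR$, the right-hand side becomes $H_{X_M}^\pm(M, \partial M)$ because reducing $\d_{\mathrm{eq}}$ modulo $u_j - s_j$ yields $\d_{X_M}$, while the left-hand side equals $H_G^\pm(M, \partial M)/\mathfrak{m}_X H_G^\pm(M, \partial M)$ since $f \notin \mathfrak{m}_X$ makes the $f$-localization invisible after reduction. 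A chain-level check identifies this isomorphism with $\overline{\mathrm{ev}_X}$, establishing (\ref{eq.relative iso1.}); (\ref{eq.relative iso2.}) follows in the same way with the Dirichlet conditions dropped throughout.

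The principal obstacle is the universal-coefficient step: one must verify that the flatness-based UCT applies cleanly to the infinite-dimensional Cartan complex, and that its comparison map is genuinely the one induced by $\mathrm{ev}_X$. The freeness of $H_G^\pm(M, \partial M)_f$ supplied by the localization theorem---a consequence of precisely the hypothesis $N(X_M) = F$---is the key input that eliminates the Tor obstructions and makes the argument work.
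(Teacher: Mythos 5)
Your proof is correct and follows essentially the same route as the paper: translate $N(X_M)=F$ into $X\notin Z$, pick $f\in I(Z)$ with $f(X)\neq 0$, use the (relative) localization theorem to obtain freeness of $H_G^\pm(M,\partial M)_f$ over $R_f$, and then commute evaluation at $X$ past cohomology. The universal-coefficient step you flag as the principal obstacle is precisely Atiyah--Bott's Lemma~\ref{lemma 5.6}, which the paper cites for exactly this purpose and which is stated for arbitrary complexes of free $R$-modules, so no finite-dimensionality of the Cartan complex is required.
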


\begin{proof}
Our assumption $N(X_M)=F$ is equivalent to  $X \in\gg\setminus Z$.
Therefore there is a polynomial $f \in I(Z)$ such that $f(X)\neq 0$. In addition, we can use $f$ and replace the ring $R$ by $R_f$ and then localize $ H_G^\pm(M)$ and $H_G^\pm(M,\partial M)$ to make $H_G^\pm(M)_f$ and $H_{G}^\pm(M,\partial M)_f$ which are free $R_f$-modules.

We now apply the lemma stated below, in which the left-hand side is obtained by putting $u_i=s_i$ before taking cohomology, so results in $H^\pm_{X_M}(M)$ (or similar for the relative case), while the right-hand side is the right-hand side of (\ref{eq.relative iso1.}) and  (\ref{eq.relative iso2.}), so proving the theorem.
\end{proof}

\begin{lemma}[Atiyah-Bott \protect{\cite[Lemma
5.6]{AB}}]\label{lemma 5.6} Let $(C^*,d)$ be a cochain complex of free
$R$-modules and assume that, for some polynomial $f$, $H(C^*,d)_f$
is a free module over the localized ring $R_f$. Then, if $s \in
\RR^l$ with $f(s)\neq 0$,$$ H^\pm(C_{s}^*,d_s)\cong H^\pm(C^*,d)\bmod
\mathfrak{m}_s
$$ where $\mathfrak{m}_s $ is the (maximal) ideal $\left<u_1-s_1,\dots,u_l-s_l\right>$ at $X$ in $\RR[\gg]$.
\end{lemma}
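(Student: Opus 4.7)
The plan is to proceed by induction on $l$, the number of generators of $\mathfrak{m}_s$, using the regular sequence $u_1-s_1,\dots,u_l-s_l$ on $R_f$ and its effect on the complex. Since $f(s)\neq 0$, each $u_i-s_i$ remains a non-zero-divisor after inverting $f$, and since $H(C^*,d)_f$ is free over $R_f$ by hypothesis, multiplication by any such element is injective on the cohomology. Also, since the $u_i$ have even weight, the Koszul-type differentials preserve parity, so the argument works in each of the two $\pm$-graded pieces with no change.

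For the base case $l=1$, consider the short exact sequence of cochain complexes
\begin{equation*}
0\longrightarrow C^*_f \stackrel{u_1-s_1}{\longrightarrow} C^*_f \longrightarrow C^*_f/(u_1-s_1)C^*_f \longrightarrow 0,
\end{equation*}
which is exact because $C^*$ is free over $R$ (so $u_1-s_1$ acts injectively). The induced long exact sequence in cohomology collapses into short exact sequences
\begin{equation*}
0\longrightarrow H^\pm(C^*)_f \stackrel{u_1-s_1}{\longrightarrow} H^\pm(C^*)_f \longrightarrow H^\pm\bigl(C^*_f/(u_1-s_1)C^*_f\bigr)\longrightarrow 0
\end{equation*}
precisely because multiplication by $u_1-s_1$ is injective on the free module $H^\pm(C^*)_f$. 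This identifies $H^\pm(C^*_f/(u_1-s_1)C^*_f)$ with $H^\pm(C^*)_f/(u_1-s_1)H^\pm(C^*)_f$, which is the desired isomorphism in this case.

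For the inductive step, set $\overline C^* = C^*/(u_1-s_1)C^*$. This is a complex of free modules over $\overline R = R/(u_1-s_1)$, and by the base case its cohomology (after inverting the image of $f$, which is still nonzero at the point $(s_2,\dots,s_l)$) is $H(C^*)_f/(u_1-s_1)H(C^*)_f$, which is free over $\overline R_f$. The inductive hypothesis then applies to $\overline C^*$ with the smaller ideal $\langle u_2-s_2,\dots,u_l-s_l\rangle$, yielding $H^\pm(C^*_s,d_s)\cong H^\pm(C^*)_f/\mathfrak{m}_s H^\pm(C^*)_f$. Finally, since $f(s)\neq 0$, the polynomial $f$ is already invertible modulo $\mathfrak{m}_s$, so $H^\pm(C^*)_f/\mathfrak{m}_s = H^\pm(C^*)/\mathfrak{m}_s H^\pm(C^*)$, completing the proof.

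The main obstacle I anticipate is bookkeeping: ensuring that at each inductive stage the complex $\overline C^*$ is still a complex of free modules and that its cohomology, after localizing at the residue of $f$, remains free over the quotient-localized ring. The freeness propagates because quotienting a free $R_f$-module by the non-zero-divisor $u_1-s_1$ gives a free $(R_f/(u_1-s_1))$-module; the slightly awkward commutation of localization with the quotient is harmless because $f(s)\neq 0$ guarantees that inverting $f$ and killing $\mathfrak{m}_s$ can be performed in either order.
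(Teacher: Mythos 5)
Your proof is correct. The paper offers no proof of this lemma (it is imported verbatim from Atiyah--Bott \cite{AB}), and your induction on $l$ --- splitting the long exact sequence of $0\to C^*_f\stackrel{u_1-s_1}{\longrightarrow}C^*_f\to C^*_f/(u_1-s_1)C^*_f\to 0$ using that $u_1-s_1$ is a non-zero-divisor on the free $R_f$-module $H(C^*,d)_f$, then descending to the quotient ring --- is precisely the standard specialization argument behind the original; the bookkeeping points you flag (freeness of the quotient of a free module by a regular element, and the harmless interchange of localization at $f$ with reduction mod $\mathfrak{m}_s$ because $f(s)\neq 0$) are all handled correctly, as is the observation that everything respects the $\ZZ_2$-grading since the $u_i$ have even weight.
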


\begin{corollary} \label{coroll:Witten-fixed isomorphism}
Let $X \in\gg$ and $j_{X}:N(X_M)\hookrightarrow M$, then $j^*_{X}$
induces the following isomorphisms
\begin{itemize}
\item[1-] $H_{X_M}^\pm(M) \cong H^\pm(N(X_M))$,
\item[2-] $H_{X_M}^\pm(M,\partial M) \cong  H^\pm(N(X_M),\partial
N(X_M))$.
\end{itemize}
\end{corollary}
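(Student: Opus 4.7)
The plan is to reduce to Theorem~\ref{relative equivariant} by replacing the ambient torus $G$ with the smaller subtorus that $X$ itself generates. Concretely, set $H := \overline{\exp(\RR X)} \subseteq G$. Since the closure of a connected one-parameter subgroup of a torus is a connected closed subgroup, $H$ is a subtorus with $X\in\mathfrak{h}$. A point $x\in M$ satisfies $X_M(x)=0$ iff $\exp(tX)\cdot x = x$ for all $t$, iff $H$ fixes $x$; hence $N(X_M) = \mathrm{Fix}(H,M)$. Moreover, if $\widehat{K}\subsetneq H$ were a proper isotropy subgroup of the $H$-action with $X\in\mathfrak{k}$, then $\exp(\RR X)\subseteq\widehat{K}$ and therefore $H = \overline{\exp(\RR X)}\subseteq \widehat{K}$, contradicting properness; thus $X\notin Z_H$, where $Z_H\subset\mathfrak{h}$ is the analogue of $Z$ for the $H$-action.

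Next I would observe that the apparatus of Sections~\ref{sec:W-H with boundary} and \ref{sec:equivariant cohomology} depends only on invariance under $\exp(\RR X)$, equivalently under $H$: averaging over the compact quotient $G/H$ provides a quasi-isomorphism between the $(\Omega_G,\d_{X_M})$ complex and its $H$-invariant analogue, so the $X_M$-cohomology is unchanged if one replaces $\Omega_G$ by $\Omega_H$, and the Cartan model, Theorem~\ref{localization them.}, and Lemma~\ref{lemma 5.6} all go through verbatim with $G$ replaced by $H$. Since $N(X_M)=\mathrm{Fix}(H,M)$ by construction, the hypothesis of Theorem~\ref{relative equivariant} is automatic for the $H$-action, yielding
\begin{equation*}
H_{X_M}^\pm(M) \cong H_H^\pm(M)/\mathfrak{m}_X H_H^\pm(M), \qquad H_{X_M}^\pm(M,\partial M) \cong H_H^\pm(M,\partial M)/\mathfrak{m}_X H_H^\pm(M,\partial M).
\end{equation*}
Picking any $f \in I(Z_H)$ with $f(X)\neq 0$, the localization theorem for $H$ (applied to both the absolute and relative complexes) gives $R_{H,f}$-module isomorphisms
\begin{equation*}
j_X^*: H_H^\pm(M)_f \stackrel{\sim}{\longrightarrow} H^\pm(N(X_M))\otimes R_{H,f},
\end{equation*}
and analogously for $(M,\partial M)$ with $(N(X_M),\partial N(X_M))$ on the right. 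Reducing modulo $\mathfrak{m}_X$, i.e.\ evaluating at $X$ which lies in the open set $\{f\neq 0\}$, collapses $R_{H,f}$ to $\RR$, so the right-hand side becomes $H^\pm(N(X_M))$; composing with the previous isomorphism yields both statements of the corollary. That the resulting isomorphism is the geometric pullback $j_X^*$ follows from $X_M\restrict{N(X_M)}=0$, which makes $j_X^*:(\Omega_G^*,\d_{X_M})\to(\Omega^*(N(X_M)),\d)$ a chain map compatible with evaluation at $X$ in the Cartan model.

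The main obstacle will be making the transfer argument of the second paragraph genuinely rigorous: one must carefully verify that Theorem~\ref{relative equivariant}, Lemma~\ref{lemma 5.6} and the Cartan-model/localization machinery remain valid when the ambient torus is the (possibly smaller) subtorus $H$ rather than $G$, and, more importantly, that the resulting algebraic identification $H_{X_M}^\pm(M)\to H^\pm(N(X_M))$ is really implemented by the geometric pullback $j_X^*$ rather than by an abstract algebraic match. Once this chain-level compatibility between evaluation-at-$X$ in the Cartan complex and restriction to $N(X_M)$ is in hand, everything else is routine bookkeeping.
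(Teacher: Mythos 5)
Your argument is correct and follows essentially the same route as the paper: localize the equivariant cohomology away from $Z$, reduce modulo $\mathfrak{m}_X$ via Lemma~\ref{lemma 5.6} and Theorem~\ref{relative equivariant}, and handle non-generic $X$ by passing to a subtorus whose fixed-point set is exactly $N(X_M)$ and transferring back to $G$-invariant forms by averaging. The only difference is organisational: the paper splits into the cases $X\notin Z$ and $X\in Z$ (choosing an isotropy subgroup $G'\supset G(X)$ in the latter), whereas you treat both uniformly with $H=\overline{\exp(\RR X)}$, for which $X\notin Z_H$ holds automatically.
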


\begin{proof}
First suppose $X\not\in Z$. Then
the isomorphisms above follow by reducing equation
(\ref{eq:localized j^* isomorphism}) modulo $\mathfrak{m}_X$ and
applying Theorem \ref{relative equivariant}.

If on the other hand, $X \in Z$, then let $G'$ be the corresponding isotropy
subgroup, so that $N(X_M)=F':=\textrm{Fix}(G',M)$ (it is clear that
$G'\supset G(X)$, the subgroup of $G$ generated by $X$). The
considerations above show that $H^\pm_{X_M,G'}(M,\partial M)\cong
H^\pm(F',\partial F')$ and $H^\pm_{X_M,G'}(M)\cong H^\pm(F')$,
where $H^\pm_{X_M,G'}(M)$ and $H^\pm_{X_M,G'}(M,\partial M)$ are defined using $G'$-invariant forms, and $\mathfrak{m}_{G',X}$ is the maximal
ideal at $X$ in the ring $\RR[\gg']$. Moreover, all classes in
$H^\pm_{X_M,G'}(M)$ and $H^\pm_{X_M,G'}(M,\partial M)$ have
representatives which are $G$-invariant, not only $G'$-invariant
(either by an averaging argument, or by using the unique
$X_M$-harmonic representatives). So, this gives
$H^\pm_{X_M,G}(M)\cong H^\pm_{X_M,G'}(M)$ and
$H^\pm_{X_M,G}(M,\partial M)\cong H^\pm_{X_M,G'}(M,\partial M)$,
$\forall X\in\gg'\subset\gg$ as desired.
\end{proof}

\begin{remark}\label{rem.$X_M$-cohomology and singular homology}
If $M$ is a compact manifold with boundary then $H^k(M)\cong H_{k}(M)$ and $H^k(M,\partial M) \cong H_{k}(M,\partial M)$, where $H_{k}(M)$ and $H_{k}(M,\partial M)$ are the absolute and relative singular homology with real coefficients. We observe that this fact together with corollary \ref{coroll:Witten-fixed isomorphism} give us the following isomorphisms
$$H_{X_M}^\pm(M) \cong H_{\pm}(N(X_M))\quad \mbox{and} \quad
  H_{X_M}^\pm(M,\partial M) \cong  H_{\pm}(N(X_M),\partial
N(X_M)),$$
where $H_{+}(N(X_M))=\oplus_i H_{2i}(N(X_M))$ and
$H_{-}(N(X_M),\partial N(X_M))=\oplus_i H_{2i+1}(N(X_M),\partial N(X_M))$,
by using the map
\begin{equation}\label{eq.inte.}
   [\omega]_{X_M}(\{c\})=\int_{c}j^*\omega,
\end{equation}
where $\omega$ is $X_M$-closed $\pm$-form representing the absolute
(or relative) $X_M$-cohomology class $[\omega]_{X_M}$ on $M$ and $c$
is a $\pm$-cycle representing the absolute (or relative) singular
homology class $\{c\}$ on $N(X_M)$. In this light, eq. (\ref{eq.Stokes'
theorem}), corollary \ref{coroll:Witten-fixed isomorphism} and the
bijection (\ref{eq.inte.}) prove the following statement:

\emph{An $X_M$-closed form $\omega$ is $X_M$-exact iff all the
periods of $j^*\omega$ over all $\pm$-cycles of $N(X_M)$ vanish.}
\end{remark}

\section{Interior and boundary subspaces}
\label{sec.style of DeTurck-Gluck}

In this section we visit some recent work of DeTurck and Gluck
\cite{Gluck} on harmonic fields and cohomology (see also
\cite{Clay1,Clay2} for details), and adapt it to $X_M$-harmonic
fields.

\subsection{Interior and boundary subspaces after DeTurck and Gluck}
Given the usual manifold $M$ with boundary, there is a long exact sequence in cohomology associated to the pair $(M,\partial M)$ and one can use this to define two subspaces of $H^k(M)$ and $H^k(M,\partial M)$ as follows:
\begin{itemize}
\item the \emph{interior} subspace $IH^k(M)$ of $H^k(M)$ is the kernel of $i^*:H^k(M)\to H^k(\partial M)$
\item the \emph{boundary} subspace  $BH^k(M,\partial M)$ of $H^k(M,\partial M)$  is the image of $\d:H^{k-1}(\partial M)\to H^k(M,\partial M)$
\end{itemize}
Note that if $M$ has no boundary, then $IH^k=H^k$ and $BH^k=0$, as should be expected from their names.

At the level of cohomology there is no `natural' definition for the boundary part of the absolute cohomology nor the interior part of the relative cohomology.  However, DeTurck and Gluck \cite{Gluck} use the metric and harmonic representatives to provide these.  Firstly the subspaces defined above are realized as
\begin{eqnarray*}
\mathcal{IH}_N^k &=&
\{\omega\in \mathcal{H}^k_{N}(M)\mid i^*\omega=\d\theta, \mbox{ for some }
\theta\in\Omega^{k-1}(\partial M)\}\\
\mathcal{BH}^k_D &=&   \mathcal{H}^k_{D}(M)\cap\mathcal{H}^k_{\mathrm{ex}}
\end{eqnarray*}
respectively (these are denoted $\mathcal{E}_{\partial}\mathcal{H}^k_{N}(M)$ and $\mathcal{EH}^k_{D}(M)$ respectively in \cite{Gluck,Clay1,Clay2}). They then use the Hodge star operator to define the other spaces:
\begin{eqnarray*}
\mathcal{BH}_N^k &=& \mathcal{H}^{k}_{N}(M)\cap\mathcal{H}^k_{\mathrm{co}}\\
\mathcal{IH}^k_D &=& \{\omega\in \mathcal{H}^k_{D}(M):
i^*\star\omega=\d\kappa, \mbox{ for some }
\kappa\in\Omega^{n-k-1}(\partial M)\}
\end{eqnarray*}
(denoted  $c\mathcal{EH}^k_{N}(M)$ and $c\mathcal{E}_{\partial}\mathcal{H}^k_{D}(M)$ in \cite{Gluck, Clay1,Clay2}). The first theorem of DeTurck and Gluck on this subject is

\begin{theorem}[DeTurck and Gluck \cite{Gluck}]\label{thm.DE-Gluck decompo.}
Both $\mathcal{H}_D^k$ and $\mathcal{H}_N^k$ have orthogonal decompositions,
\begin{eqnarray*}
  \mathcal{H}^k_{N}(M)&=& \mathcal{IH}_N^k \oplus \mathcal{BH}_N^k\\
  \mathcal{H}^k_{D}(M)&=&\mathcal{BH}_D^k\oplus \mathcal{IH}_D^k.
\end{eqnarray*}
Furthermore, the two boundary subspaces are mutually orthogonal inside $L^2\Omega$.
\end{theorem}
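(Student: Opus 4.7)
My plan is to split the theorem into three tasks: within-orthogonality of each decomposition, completeness of the direct sums, and cross-orthogonality of the two boundary subspaces. All three rely on Green's formula (eq.~\eqref{eq.2.16} specialized to $X_M=0$) combined with Stokes' theorem on the closed manifold $\partial M$, plus a dimension count via Hodge theory and the long exact sequence of the pair.

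For orthogonality within the Neumann decomposition, take $\omega_1\in\mathcal{IH}^k_N$ with $i^*\omega_1=\d\theta$ and $\omega_2\in\mathcal{BH}^k_N$ written as $\delta\beta$ for some primitive $\beta$. Green's formula, together with $\d\omega_1=0$, reduces $\langle\omega_1,\omega_2\rangle=\langle\omega_1,\delta\beta\rangle$ to the boundary integral $-\int_{\partial M}\d\theta\wedge i^*(\star\beta)$. Applying Stokes on $\partial M$ transfers the exterior derivative onto $i^*\star\beta$, and since $\d(\star\beta)=\pm\star\delta\beta=\pm\star\omega_2$ the Neumann condition $i^*\star\omega_2=0$ forces the integral to vanish. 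The Dirichlet decomposition is then obtained by applying the Hodge star, which exchanges $\mathcal{IH}^k_N\leftrightarrow\mathcal{IH}^{n-k}_D$ and $\mathcal{BH}^k_N\leftrightarrow\mathcal{BH}^{n-k}_D$ by definition and is an $L^2$-isometry.

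For completeness I would argue by dimension. Under the classical Hodge isomorphism $\mathcal{H}^k_N\cong H^k(M)$, the subspace $\mathcal{IH}^k_N$ corresponds to $IH^k(M)=\ker(i^*\colon H^k(M)\to H^k(\partial M))$. Via $\star$, $\mathcal{BH}^k_N\cong\mathcal{BH}^{n-k}_D$, and the latter corresponds to $BH^{n-k}(M,\partial M)$ because a Dirichlet harmonic field that is exact as a form represents a relative class killed by the forgetful map $H^{n-k}(M,\partial M)\to H^{n-k}(M)$, which by exactness is $\im(\d)=BH^{n-k}(M,\partial M)$. The long exact sequence of the pair gives $\dim H^{n-k}(M,\partial M)=\dim BH^{n-k}(M,\partial M)+\dim IH^{n-k}(M)$, and Poincaré-Lefschetz duality (which makes $i^*$ and the connecting $\d$ adjoints) yields $\dim IH^k(M)=\dim IH^{n-k}(M)$. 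Combining with $\dim\mathcal{H}^k_N=\dim H^k(M)=\dim H^{n-k}(M,\partial M)$ gives $\dim\mathcal{H}^k_N=\dim\mathcal{IH}^k_N+\dim\mathcal{BH}^k_N$, closing the direct sum.

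For cross-orthogonality take $\omega_N\in\mathcal{BH}^k_N$ and $\omega_D=\d\alpha\in\mathcal{BH}^k_D$; Green's formula gives $\langle\d\alpha,\omega_N\rangle=\langle\alpha,\delta\omega_N\rangle+\int_{\partial M}i^*(\alpha\wedge\star\omega_N)$, and both terms vanish: $\delta\omega_N=0$ because $\omega_N$ is a harmonic field, and $i^*\star\omega_N=0$ because $\omega_N$ is Neumann. The main obstacle will be the dimension count: bookkeeping the Hodge identifications and invoking Poincaré-Lefschetz duality to equate $\dim IH^k(M)$ with $\dim IH^{n-k}(M)$, whereas the Green's-formula steps are essentially symbolic manipulations that pass unchanged to the $X_M$-setting in the next subsection.
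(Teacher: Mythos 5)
Your proof is correct and takes essentially the same route as the paper's own argument (given there for the $X_M$-analogue and explicitly said to apply verbatim to the classical case): orthogonality of all the summands via Green's formula, and completeness via a dimension count built from the long exact sequence of the pair $(M,\partial M)$ closed off by Poincar\'e--Lefschetz duality. The only cosmetic difference is in how the duality is invoked --- the paper derives the inequality $\dim\mathcal{IH}^k_D\le\dim\mathcal{IH}^k_N$ from the exact sequence and then forces equality using the Hodge-star identification $\mathcal{IH}^k_N\cong\mathcal{IH}^{n-k}_D$, whereas you use the adjointness of $i^*$ and the connecting map $\d$ under the cup-product pairing to get $\dim IH^k(M)=\dim IH^{n-k}(M)$ directly; these are the same duality in different clothing.
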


However the interior subspaces are not orthogonal, and they prove

\begin{theorem}[DeTurck-Gluck \cite{Gluck}]\label{De-Glu duality angles}
The principal angles between the interior subspaces $\mathcal{IH}_N^k$ and $\mathcal{IH}_D^k$ are all acute.
\end{theorem}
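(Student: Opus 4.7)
The principal angles between $\mathcal{IH}_N^k$ and $\mathcal{IH}_D^k$ all lie strictly between $0$ and $\pi/2$ iff (i) $\mathcal{IH}_N^k\cap\mathcal{IH}_D^k=\{0\}$ and (ii) no nonzero $\omega\in\mathcal{IH}_N^k$ is $L^2$-orthogonal to all of $\mathcal{IH}_D^k$. (These conditions are genuinely independent and together exactly rule out cosines equal to $1$ or $0$.) The plan is to handle the two claims separately.

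For (i): any $\omega$ in the intersection lies in $\mathcal{H}_N^k\cap\mathcal{H}_D^k$, so it satisfies the harmonic-field equations $\d\omega=\delta\omega=0$ together with the full Cauchy data $i^*\omega=0$ and $i^*\star\omega=0$ on $\partial M$. In particular $\Delta\omega=0$ with vanishing Cauchy data, and Aronszajn's unique continuation theorem for the Hodge Laplacian forces $\omega\equiv 0$ on every component of $M$ that meets $\partial M$; under the standing connectedness assumption this gives $\omega=0$.

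For (ii), which is the substantive step, I will show that for any nonzero $\omega\in\mathcal{IH}_N^k$ one can produce $\eta\in\mathcal{IH}_D^k$ with $\langle\omega,\eta\rangle=\|\omega\|^2$. Since $\omega\in\mathcal{IH}_N^k$ we have $i^*\omega=\d\theta$ for some $\theta\in\Omega^{k-1}(\partial M)$; pick any extension $\tilde\theta\in\Omega^{k-1}(M)$ (via a collar neighbourhood of $\partial M$). Then $\omega-\d\tilde\theta\in\Omega_D^k$ is closed, so the classical Hodge--Morrey decomposition yields
\[
\omega-\d\tilde\theta=\xi+\d\alpha,\qquad \xi\in\mathcal{H}_D^k,\ \alpha\in\Omega_D^{k-1}.
\]
Using Theorem~\ref{thm.DE-Gluck decompo.} split $\xi=\xi_B+\eta$ with $\xi_B\in\mathcal{BH}_D^k$ and $\eta\in\mathcal{IH}_D^k$, so that $\omega=\eta+\xi_B+\d(\tilde\theta+\alpha)$. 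Two orthogonalities finish the computation. First, Green's formula gives
\[
\langle\omega,\d(\tilde\theta+\alpha)\rangle=\langle\delta\omega,\tilde\theta+\alpha\rangle+\int_{\partial M}i^*\bigl((\tilde\theta+\alpha)\wedge\star\omega\bigr)=0,
\]
because $\delta\omega=0$ (harmonic field) and $i^*\star\omega=0$ (Neumann). Second, $\xi_B\in\mathcal{BH}_D^k\subset\mathcal{H}_{\mathrm{ex}}$, while $\omega\in\mathcal{H}_N^k\perp\mathcal{H}_{\mathrm{ex}}$ by Friedrichs' decomposition, hence $\langle\omega,\xi_B\rangle=0$. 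Taking the inner product of the displayed identity with $\omega$ therefore collapses to $\|\omega\|^2=\langle\omega,\eta\rangle$ with $\eta\in\mathcal{IH}_D^k$, which is exactly (ii).

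The main obstacle is Step (i): it hinges on the analytically non-trivial unique continuation property of the Hodge Laplacian, a tool of a different flavour from the Hodge-theoretic input carrying Step (ii). Step (ii) itself is essentially careful accounting within the Hodge--Morrey and Friedrichs decompositions together with the DeTurck--Gluck refinement already in place, so once the extension $\tilde\theta$ is fixed the argument is formal.
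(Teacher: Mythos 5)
Your proof is correct. For context: the paper quotes this theorem from DeTurck--Gluck without proof; its own argument appears only for the $X_M$-analogue (Proposition~\ref{prop. our duality angles}). There, angle $0$ is excluded exactly as in your step (i), via $\mathcal{H}_N^k\cap\mathcal{H}_D^k=\{0\}$ --- a fact the paper cites from \cite{Schwarz} (resp.\ \cite{Q&J}) and whose unique-continuation proof you spell out --- while angle $\pi/2$ is excluded by maximality: Theorem~\ref{thm. largest orthogonal } says the boundary subspace is the largest subspace of $\mathcal{H}_N^k$ orthogonal to all of $\mathcal{H}_D^k$, so an $\omega\in\mathcal{IH}_N^k$ orthogonal to $\mathcal{IH}_D^k$ (hence, by Green's formula, to all of $\mathcal{H}_D^k=\mathcal{BH}_D^k\oplus\mathcal{IH}_D^k$) would lie in $\mathcal{IH}_N^k\cap\mathcal{BH}_N^k=\{0\}$. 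Your step (ii) takes a genuinely more constructive route: you exhibit an explicit non-orthogonal partner $\eta\in\mathcal{IH}_D^k$ with $\langle\omega,\eta\rangle=\|\omega\|^2$ by Hodge--Morrey-decomposing $\omega-\d\tilde\theta$, and the details check out --- the closed Dirichlet form $\omega-\d\tilde\theta$ does split as $\mathcal{H}_D^k\oplus\d\Omega_D^{k-1}$, and both orthogonality computations (Green's formula with $\delta\omega=0$ and $i^*\star\omega=0$, and $\mathcal{H}_N^k\perp\mathcal{H}_{\mathrm{ex}}^k$ from Friedrichs) are sound. The two routes consume the same inputs (the Friedrichs decomposition plus the trivial intersection), but yours produces a concrete Poincar\'e-dual witness and adapts verbatim to the $X_M$-setting, while the paper's is shorter once Theorems~\ref{thm.DE-Gluck decompo.} and \ref{thm. largest orthogonal } are in hand. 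Two minor remarks: your reduction of the claim to (i) and (ii) tacitly requires that the extreme principal angle be governed by vectors of $\mathcal{IH}_N^k$, i.e.\ that $\dim\mathcal{IH}_N^k\le\dim\mathcal{IH}_D^k$; this is harmless, since (ii) itself forces that inequality and equality holds anyway by Theorem~\ref{thm.DE-Gluck decompo.} together with the long exact sequence of the pair. And, as you note, step (i) needs every component of $M$ to meet $\partial M$, which is the standing hypothesis throughout the paper.
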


Part of the motivation for considering these principal angles, called \emph{Poincar\'e duality angles}, is that they should measure in some sense how far the Riemannian manifold $M$ is from being closed.  That these angles are non-zero follows from the fact that $\mathcal{H}_N^k\cap \mathcal{H}_D^k=0$, see \cite{Schwarz}. Another consequence of this, pointed out by DeTurck and Gluck is that the Hodge-Morrey-Freidrichs decomposition  can be refined to a 5-term decomposition,
\begin{equation}\label{eq. Det&Gluck decom.}
   \Omega^k(M)=\d\Omega_D^{k-1} \oplus \delta\Omega_N^{k+1}
\oplus(\mathcal{H}^k_D + \mathcal{H}^{k}_N)\oplus
\mathcal{H}^k_{\mathrm{ex,co}},
\end{equation}
where
$\mathcal{H}^k_{\mathrm{ex,co}}=\mathcal{H}^k_{\mathrm{ex}}\cap\mathcal{H}^k_{\mathrm{co}}$
and the symbol $+$ indicates a direct sum whereas $\oplus$ indicates an orthogonal direct sum.

In his thesis \cite{Clay1}, Shonkwiler  measures these Poincar\'e duality angles in interesting examples of manifolds with boundary derived from complex projective spaces and Grassmannians and shows that in these examples the angles do indeed tend to zero as the boundary shrinks to zero, see alternatively \cite{Clay2}.

\subsection{Extension to $X_M$-cohomology}

It seems reasonable to think that we can extend further to the style
of DeTurck-Gluck, and break down the Neumann and Dirichlet
$X_M$-harmonic fields into interior and boundary subspaces. If so,
does the natural extension of corollary \ref{coroll:Witten-fixed
isomorphism} hold? The answer is affirmative and contained in the
proof of theorem \ref{thm:refine localization}.

Answering this question will indeed give more concrete understanding
of these isomorphisms and consequently will give a precise extension
to Witten's results when $\partial M\neq\emptyset$ (see Section
\ref{sec:conclusions}).

\paragraph{Refinement of the $X_M$-Hodge-Morrey-Friedrichs decomposition}
In \cite{Q&J}, we prove that
$$\mathcal{H}^{\pm}_{X_{M},N}(M) \cap \mathcal{H}^{\pm}_{X_{M},D}(M)=\{0\},$$
which implies that the sum $\mathcal{H}^{\pm}_{X_{M},N}(M) + \mathcal{H}^{\pm}_{X_{M},D}(M)$ is
a direct sum, and by using Green's formula~(\ref{eq.2.16}), one finds that the orthogonal complement of $\mathcal{H}^{\pm}_{X_{M},N}(M) + \mathcal{H}^{\pm}_{X_{M},D}(M)$ inside $\mathcal{H}^{\pm}_{X_{M}}(M)$  is
$\mathcal{H}^{\pm}_{X_{M},\mathrm{ex,co}}(M)=\mathcal{H}^{\pm}_{X_{M},\mathrm{ex}}(M) \cap \mathcal{H}^{\pm}_{X_{M},\mathrm{co}}(M)$.
Therefore, we can refine the $X_M$-Friedrichs decomposition (theorem \ref{thm:X_M-Friedrichs}) into
$$\mathcal{H}^{\pm}_{X_{M}}(M) = ( \mathcal{H}^{\pm}_{X_{M},N}(M)+
\mathcal{H}^{\pm}_{X_{M},D}(M) )\oplus
\mathcal{H}^{\pm}_{X_{M},\mathrm{ex,co}}(M). $$
Consequently, following DeTurck and Gluck's decomposition (\ref{eq. Det&Gluck decom.}), we can refine the $X_M$-Hodge-Morrey-Friedrichs decompositions (Corollary \ref{co.2.6}) into the following five terms decomposition:
\begin{equation}\label{eq:5 term X_M decomposition}
\Omega^{\pm}_{G}(M) =\mathcal{E}^{\pm}_{X_M}(M)\oplus
\mathcal{C}^{\pm}_{X_M}(M)\oplus ( \mathcal{H}^{\pm}_{X_{M},N}(M)+
\mathcal{H}^{\pm}_{X_{M},D}(M) )\oplus
\mathcal{H}^{\pm}_{X_{M},\mathrm{ex,co}}(M).
\end{equation}
Here as usual, $\oplus$ is an orthogonal direct sum, while $+$ is just a direct sum.

\paragraph{Interior and boundary portions of $X_M$-cohomology}
Following the ordinary case described above, we can define interior
and boundary portions of the $X_M$-cohomology and $X_M$-harmonic
fields by
\begin{equation} \label{eq:interior-boundary for X_M cohomology}
\begin{array}{rcl}
IH_{X_M}^\pm(M) &=& \ker[i^*:H^\pm_{X_M}(M)\to H^\pm_{X_M}(\partial M)]\\[4pt]
BH_{X_M}^\pm(M,\partial M) &=& \im[\d_{X_M}:H_{X_M}^\mp(\partial M) \to H_{X_M}^\pm(M,\partial M)].
\end{array}
\end{equation}
Here $\d_{X_M}$ is the standard construction: given a closed form $\lambda$ on $\partial M$, let $\tilde\lambda$ be an extension to $M$. Then $\d_{X_M}\tilde\lambda$ defines a well-defined element of $H_{X_M}(M,\partial M)$.  These spaces are realized through corollary \ref{coro.Hodge is often quoted} as
\begin{eqnarray*}
\mathcal{IH}_{X_M,N}^\pm &=&
\{\omega\in \mathcal{H}^\pm_{X_M,N}(M)\mid i^*\omega=\d_{X_M}\theta, \mbox{ for some }
\theta\in\Omega^\mp(\partial M)\}\\
\mathcal{BH}^\pm_{X_M,D} &=&   \mathcal{H}^\pm_{X_M,D}(M)\cap\mathcal{H}^\pm_{X_M,\mathrm{ex}}
\end{eqnarray*}
respectively. Now use the Hodge star operator to define the other spaces:
\begin{eqnarray*}
\mathcal{IH}^\pm_{X_M,D} &=& \{\omega\in \mathcal{H}^\pm_{X_M,D}(M):
i^*\star\omega=\d_{X_M}\kappa, \mbox{ for some }
\kappa\in\Omega^{n-\mp}(\partial M)\}\\
\mathcal{BH}_{X_M,N}^\pm &=&
\mathcal{H}^{\pm}_{X_M,N}(M)\cap\mathcal{H}^\pm_{X_M,\mathrm{co}}.
\end{eqnarray*}
Note that Hodge star maps boundary to boundary and interior to interior; it follows that, for example $\mathcal{BH}_{X_M,N}^\pm \cong \mathcal{BH}_{X_M,D}^{n-\pm}$.

\begin{theorem}\label{thm. largest orthogonal }
The boundary subspace $\mathcal{BH}^{\pm}_{X_{M},N}(M)$
is the largest subspace of\/ $\mathcal{H}^{\pm}_{X_{M},N}(M)$ orthogonal to all of
$\mathcal{H}^{\pm}_{X_{M},D}(M)$ while the boundary subspace
$\mathcal{BH}^\pm_{X_{M},D}(M)$ is the largest
subspace of\/ $\mathcal{H}^{\pm}_{X_{M},D}(M)$  orthogonal to all of
$\mathcal{H}^{\pm}_{X_{M},N}(M).$
\end{theorem}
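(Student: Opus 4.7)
The plan is to prove the two containments that together say $\mathcal{BH}^{\pm}_{X_{M},N}(M)$ equals the orthogonal complement of $\mathcal{H}^{\pm}_{X_{M},D}(M)$ inside $\mathcal{H}^{\pm}_{X_{M},N}(M)$; the statement about the Dirichlet boundary subspace will then follow by transport via the Hodge star, which is an $L^{2}$-isometry exchanging Dirichlet/Neumann and exact/coexact data. So the work reduces to the Neumann case.

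For the first containment, let $\omega = \delta_{X_{M}}\alpha\in\mathcal{BH}^{\pm}_{X_{M},N}(M)$ with $\alpha\in\mathsf{H^{1}}\Omega_{G,N}$, and let $\beta\in\mathcal{H}^{\pm}_{X_{M},D}(M)$. Apply Green's formula (\ref{eq.2.16}) in the form
$$
\langle \d_{X_M}\beta,\alpha\rangle
= \langle \beta,\delta_{X_M}\alpha\rangle
+ \int_{\partial M} i^{*}(\beta\wedge\star\alpha).
$$
Since $\beta\in\mathcal{H}^{\pm}_{X_{M},D}(M)$ we have $\d_{X_M}\beta=0$ and $i^{*}\beta=0$, so the left-hand side and the boundary integral both vanish. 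Hence $\langle\omega,\beta\rangle=\langle\delta_{X_M}\alpha,\beta\rangle=0$, giving $\mathcal{BH}^{\pm}_{X_{M},N}(M)\perp\mathcal{H}^{\pm}_{X_{M},D}(M)$.

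For maximality, take any $\omega\in\mathcal{H}^{\pm}_{X_{M},N}(M)$ with $\omega\perp\mathcal{H}^{\pm}_{X_{M},D}(M)$. Apply the $X_M$-Friedrichs decomposition (Theorem~\ref{thm:X_M-Friedrichs}) to write, orthogonally,
$$
\omega = \omega_{D} + \omega_{\mathrm{co}},\qquad \omega_{D}\in\mathcal{H}^{\pm}_{X_{M},D}(M),\quad \omega_{\mathrm{co}}\in\mathcal{H}^{\pm}_{X_{M},\mathrm{co}}(M).
$$
Pairing with $\omega_{D}$ and using both the hypothesis $\omega\perp\omega_{D}$ and the orthogonality of the Friedrichs summands yields $\|\omega_{D}\|^{2}=0$. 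Thus $\omega=\omega_{\mathrm{co}}\in\mathcal{H}^{\pm}_{X_{M},N}(M)\cap\mathcal{H}^{\pm}_{X_{M},\mathrm{co}}(M)=\mathcal{BH}^{\pm}_{X_{M},N}(M)$, as required.

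Finally, for the Dirichlet assertion, observe that $\star$ restricts to $L^{2}$-isometries $\mathcal{H}^{\pm}_{X_{M},D}(M)\cong\mathcal{H}^{n-\pm}_{X_{M},N}(M)$ and $\mathcal{H}^{\pm}_{X_{M},\mathrm{ex}}\cong\mathcal{H}^{n-\pm}_{X_{M},\mathrm{co}}$, hence $\mathcal{BH}^{\pm}_{X_{M},D}(M)\cong\mathcal{BH}^{n-\pm}_{X_{M},N}(M)$; applying the already-proved Neumann statement on the $(n-\pm)$ side and pulling back through $\star$ delivers the Dirichlet conclusion. The only potential obstacle is the correct handling of the boundary term in Green's formula (one must use $\alpha\in\mathsf{H^{1}}\Omega_{G,N}$ so that the trace $i^{*}(\star\alpha)$ makes sense and verify $i^{*}(\beta\wedge\star\alpha)=0$ from $i^{*}\beta=0$), but once $\alpha$ is taken in $\mathsf{H^{1}}\Omega_{G,N}$ the boundary contribution vanishes cleanly and the rest of the argument is formal.
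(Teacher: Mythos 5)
Your proof is correct and follows essentially the route the paper indicates in its one-line proof: orthogonality of the boundary subspace to the opposite harmonic space via Green's formula (\ref{eq.2.16}), and maximality by splitting an arbitrary $\omega\in\mathcal{H}^{\pm}_{X_M,N}(M)$ orthogonal to $\mathcal{H}^{\pm}_{X_M,D}(M)$ with the $X_M$-Friedrichs decomposition (Theorem~\ref{thm:X_M-Friedrichs}) and killing the Dirichlet component, with the Dirichlet half transported by the Hodge star. One small remark: the Neumann condition on $\alpha$ is not actually needed, since the trace $i^*(\star\alpha)$ makes sense for any $\mathsf{H}^1$ form and the boundary term $\int_{\partial M} i^*(\beta\wedge\star\alpha)$ already vanishes because $i^*\beta=0$; this matters slightly because the definition of $\mathcal{H}^{\pm}_{X_M,\mathrm{co}}(M)$ does not impose a boundary condition on the potential $\alpha$.
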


\begin{proof}
The orthogonality follows immediately from Green's formula~(\ref{eq.2.16}) while the rest of the proof follow immediately from the $X_M$-Friedrichs decomposition theorem (theorem \ref{thm:X_M-Friedrichs}) (restricted to smooth invariant forms).
\end{proof}

The main goal of this subsection is to prove the following theorem
and to answer the question above.
\begin{theorem}\label{thm.De- Gluck and ours decom.}
Analogous to theorem~\ref{thm.DE-Gluck decompo.}, we have the orthogonal decompositions
\begin{eqnarray*}
  \mathcal{H}^\pm_{X_M,N}(M)&=& \mathcal{IH}_{X_M,N}^\pm \oplus \mathcal{BH}_{X_M,N}^\pm\\
  \mathcal{H}^k_{X_M,D}(M)&=&\mathcal{BH}_{X_MD}^\pm \oplus \mathcal{IH}_{X_M,D}^\pm.
\end{eqnarray*}
\end{theorem}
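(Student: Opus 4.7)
The plan is to adapt DeTurck and Gluck's proof of Theorem~\ref{thm.DE-Gluck decompo.} to the Witten--Hodge setting. Two points need verification for the Neumann decomposition: (i) that $\mathcal{IH}^{\pm}_{X_M,N}$ and $\mathcal{BH}^{\pm}_{X_M,N}$ are $L^{2}$-orthogonal, and (ii) that every Neumann $X_M$-harmonic field splits as the sum of an interior and a boundary part. The Dirichlet decomposition will then follow by applying the Hodge star operator, since $\star$ sends $\mathcal{IH}^{\pm}_{X_M,N}$ isomorphically onto $\mathcal{IH}^{n-\pm}_{X_M,D}$ and $\mathcal{BH}^{\pm}_{X_M,N}$ onto $\mathcal{BH}^{n-\pm}_{X_M,D}$ (interior and boundary character is manifestly preserved by~$\star$).

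For (i), take $\alpha \in \mathcal{IH}^{\pm}_{X_M,N}$ with $i^{*}\alpha = \d_{X_M}\theta$ and $\beta = \delta_{X_M}\phi \in \mathcal{BH}^{\pm}_{X_M,N}$. Since $\d_{X_M}\alpha = 0$, Green's formula~(\ref{eq.2.16}) yields
\[
\langle\alpha,\beta\rangle \;=\; -\int_{\partial M} i^{*}\alpha \wedge i^{*}\star\phi \;=\; -\int_{\partial M} \d_{X_M}\theta \wedge i^{*}\star\phi.
\]
The identity $\star\delta_{X_M} = \pm\,\d_{X_M}\star$ combined with the Neumann condition $i^{*}\star\beta=0$ gives $\d_{X_M}(i^{*}\star\phi) = \pm\, i^{*}\star\beta = 0$ on $\partial M$. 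Applying Stokes' theorem (eq.~(\ref{eq.Stokes' theorem})) to the Leibniz expansion of $\d_{X_M}(\theta\wedge i^{*}\star\phi)$ on the closed manifold $\partial M$ then forces this integral to vanish.

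For (ii), given $\omega \in \mathcal{H}^{\pm}_{X_M,N}(M)$, the pullback $i^{*}\omega$ is $\d_{X_M}$-closed on the boundaryless manifold $\partial M$, with its induced $G$-action and Riemannian metric. The $X_M$-Hodge decomposition of Theorem~\ref{thm:self-adjoint elliptic no boundary} applied on $\partial M$ yields $i^{*}\omega = \d_{X_M}\theta + \mu$ with $\mu \in \mathcal{H}_{X_M}(\partial M)$. The key construction is to produce $\sigma \in \mathcal{BH}^{\pm}_{X_M,N}$ whose trace satisfies $i^{*}\sigma = \mu + \d_{X_M}\theta'$ for some $\theta'$; granting this, $\omega-\sigma$ is Neumann, $X_M$-harmonic, and has $\d_{X_M}$-exact trace, so lies in $\mathcal{IH}^{\pm}_{X_M,N}$, giving the desired splitting.

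The main obstacle will be the construction of $\sigma$. My approach would be to solve an auxiliary elliptic boundary-value problem for $\Delta_{X_M}$ in the style of Section~\ref{subsec. Elliptic BVP}, using Proposition~\ref{pro.2.7} and the $L^{2}$-closedness of $\mathcal{C}_{X_M}(M)$ from Proposition~\ref{pro.2.8}, with Hodge-star trace data built from $\mu$. Defining $\sigma$ as the coexact component $\delta_{X_M}\phi$ of the resulting potential $\phi$ and projecting onto $\mathcal{H}^{\pm}_{X_M,N}(M)$ by means of the $X_M$-Friedrichs decomposition (Theorem~\ref{thm:X_M-Friedrichs}) should then deliver an element of $\mathcal{BH}^{\pm}_{X_M,N}$ with the required trace; uniqueness of the splitting comes for free from (i).
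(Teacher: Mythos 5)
Your step (i) is fine and agrees with the paper, which disposes of the orthogonality with a one-line appeal to Green's formula~(\ref{eq.2.16}); your integration by parts on the closed manifold $\partial M$ supplies the missing details correctly, and the reduction of the Dirichlet case to the Neumann case by $\star$ is also how the paper proceeds. The gap is in step (ii). The whole content of the surjectivity half of the theorem is the existence of an element $\sigma\in\mathcal{BH}^{\pm}_{X_M,N}$ whose trace realizes a prescribed harmonic part $\mu$, and that is exactly the step you defer: you name it ``the main obstacle'' and then offer a plan that ``should then deliver'' it. Nothing in Section~\ref{subsec. Elliptic BVP} produces a coexact Neumann $X_M$-harmonic field with prescribed \emph{inhomogeneous} boundary trace: Proposition~\ref{pro.2.7} solves $\Delta_{X_M}\omega=\eta$ with \emph{homogeneous} boundary conditions, and projecting a coexact field onto $\mathcal{H}^{\pm}_{X_M,N}(M)$ via Theorem~\ref{thm:X_M-Friedrichs} gives no control over its trace. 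In DeTurck and Gluck's original argument this realization step rests on Poincar\'e--Lefschetz duality between de Rham cohomology and singular homology, which --- as the paper's own remark preceding the proof points out --- is not available in the $X_M$-setting; so your plan runs into precisely the obstruction that forced the authors onto a different route.

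The paper avoids the construction altogether by a dimension count. Orthogonality gives the containments $\mathcal{IH}^{\pm}_{X_M,N}\oplus\mathcal{BH}^{\pm}_{X_M,N}\subset\mathcal{H}^{\pm}_{X_M,N}(M)$ and $\mathcal{BH}^{\pm}_{X_M,D}\oplus\mathcal{IH}^{\pm}_{X_M,D}\subset\mathcal{H}^{\pm}_{X_M,D}(M)$. The long exact sequence of the pair $(M,\partial M)$ in $X_M$-cohomology identifies $IH^{\pm}_{X_M}(M)=\im\rho^{*}$ and $BH^{\pm}_{X_M}(M,\partial M)=\ker\rho^{*}$, so that $H^{\pm}_{X_M}(M,\partial M)$ is the direct sum of $BH^{\pm}_{X_M}(M,\partial M)$ and a copy of $IH^{\pm}_{X_M}(M)$; translating through the $X_M$-Hodge isomorphism (Theorem~\ref{thm:X_M-Hodge}) gives $\dim\mathcal{H}^{\pm}_{X_M,D}=\dim\mathcal{BH}^{\pm}_{X_M,D}+\dim\mathcal{IH}^{\pm}_{X_M,N}$. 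Comparing with the Dirichlet containment yields $\dim\mathcal{IH}^{\pm}_{X_M,D}\le\dim\mathcal{IH}^{\pm}_{X_M,N}$, and since $\star$ identifies $\mathcal{IH}^{\pm}_{X_M,N}$ with $\mathcal{IH}^{n-\pm}_{X_M,D}$, the inequality must be an equality, forcing both containments to be equalities. Either adopt this exact-sequence argument for your step (ii), or actually prove the trace-realization statement --- as written, it is assumed rather than established.
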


\begin{remark}
The proof by DeTurck and Gluck of the analogous result uses the duality between de Rham cohomology and singular homology.  However, we do not have such a result on $M$ (though perhaps a proof using the equivariant homology described in \cite{MacPherson} would be possible), so we give a direct proof involving only the cohomology---the same argument can be used to prove DeTurck and Gluck's original theorem (replacing $\pm$ by $k$ everywhere). An alternative argument can be given using the localization to the fixed point set (corollary~\,\ref{coroll:Witten-fixed isomorphism})---details of which can be found in \cite{my thesis}.
\end{remark}

\begin{proof}
The orthogonality of the right hand sides follows from Green's formula (\ref{eq.2.16}).
It follows that
\begin{equation}\label{eq:direct sum subset}
\mathcal{IH}_{X_M,N}^\pm \oplus \mathcal{BH}_{X_M,N}^\pm \subset \mathcal{H}^\pm_{X_M,N}(M)\quad\mbox{and}\quad
  \mathcal{BH}_{X_MD}^\pm \oplus \mathcal{IH}_{X_M,D}^\pm \subset \mathcal{H}^k_{X_M,D}(M).
\end{equation}

Now consider the long exact sequence in $X_M$-cohomology derived from the inclusion $i:\partial M\hookrightarrow M$,
$$\cdots\stackrel{i^*}{\longrightarrow} H_{X_M}^\mp(\partial M) \stackrel{\d_{X_M}}{\longrightarrow} H_{X_M}^\pm(M,\partial M) \stackrel{\rho^*}{\longrightarrow} H_{X_M}^\pm(M) \stackrel{i^*}{\longrightarrow} H_{X_M}^\pm(\partial M) \stackrel{\d_{X_M}}{\longrightarrow} H_{X_M}^\mp(M,\partial M)\longrightarrow\cdots
$$
It follows from the exactness that
$$IH_{X_M}^\pm(M) = \im\rho^*,\quad \mbox{and} \quad BH_{X_M}^\pm(M,\partial M)=\ker\rho^*.$$
Thus,
$H^\pm_{X_M}(M,\partial M) \cong BH_{X_M}^\pm(M,\partial M) + IH_{X_M}^\pm(M)$, (direct sum) or equivalently
\begin{equation}\label{eq:direct sum cohomology}
\mathcal{H}^\pm_{X_M,D} \cong \mathcal{BH}^\pm_{X_M,D} + \mathcal{IH}^\pm_{X_M,N}.
\end{equation}
It follows from equations~(\ref{eq:direct sum subset}) and (\ref{eq:direct sum cohomology}) that $\dim(\mathcal{IH}^\pm_{X_M,D})\leq\dim(\mathcal{IH}^\pm_{X_M,N})$.
However, the Hodge star operator identifies $\mathcal{IH}_{X_M,N}^\pm$ with $\mathcal{IH}_{X_M,D}^{n-\pm}$ which implies that the inequality in dimensions is in fact an equality, and the result follows.
\end{proof}

\begin{theorem}\label{thm:refine localization}
 Let $F'=N(X_M)$. We have isomorphisms,
$$\begin{array}{cc}
\mathcal{IH}^\pm_{X_{M},N}(M)\cong \mathcal{IH}^\pm_{N}(F'), 
&\mathcal{BH}^{\pm}_{X_{M},D}(M)\cong \mathcal{BH}^{\pm}_{D}(F'),\\ \mathcal{IH}^\pm_{X_{M},D}(M)\cong\mathcal{IH}^\pm_{D}(F'), 
& \mathcal{BH}^\pm_{X_{M},N}(M)\cong \mathcal{BH}^\pm_{N}(F').
\end{array}
$$
\end{theorem}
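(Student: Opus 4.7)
The plan is to reduce the four claimed isomorphisms to a naturality statement for the long exact sequence of the pair $(M,\partial M)$ under the pullback $j_X^*$, combined with the unique harmonic representative property. Throughout, I will write $F'=N(X_M)$ and note that because $X_M$ is tangent to $\partial M$, the zero set of $X_M|_{\partial M}$ is exactly $\partial F'=F'\cap\partial M$, so the inclusion $j_X$ is a morphism of pairs $j_X:(F',\partial F')\hookrightarrow(M,\partial M)$.

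First I would apply Corollary~\ref{coroll:Witten-fixed isomorphism} to the three $G$-manifolds $M$, $\partial M$, and simultaneously to the relative pairs, giving vertical isomorphisms
\[
\begin{array}{ccccccccc}
\cdots & \to & H_{X_M}^\mp(\partial M) & \stackrel{\d_{X_M}}{\to} & H_{X_M}^\pm(M,\partial M) & \to & H_{X_M}^\pm(M) & \stackrel{i^*}{\to} & H_{X_M}^\pm(\partial M) \to \cdots \\
 & & \downarrow j_X^* & & \downarrow j_X^* & & \downarrow j_X^* & & \downarrow j_X^*\\
\cdots & \to & H^\mp(\partial F') & \stackrel{\d}{\to} & H^\pm(F',\partial F') & \to & H^\pm(F') & \stackrel{i^*}{\to} & H^\pm(\partial F') \to \cdots
\end{array}
\]
Each column is an isomorphism, and each square commutes because pullback is natural with respect to inclusions of pairs and with respect to the connecting homomorphism $\d_{X_M}$ (which, like the ordinary $\d$, is constructed by extending a closed form on the boundary into $M$ and applying $\d_{X_M}$; pulling back an extension from $M$ to $F'$ yields an extension across $\partial F'$).

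From the commutativity, $j_X^*$ carries $\ker(i^*)$ isomorphically onto $\ker(i^*)$ and $\im(\d_{X_M})$ isomorphically onto $\im(\d)$, i.e.\ it restricts to isomorphisms
\[
IH^\pm_{X_M}(M)\cong IH^\pm(F'),\qquad BH^\pm_{X_M}(M,\partial M)\cong BH^\pm(F',\partial F').
\]
Now I invoke Corollary~\ref{coro.Hodge is often quoted} (and its classical analogue on $F'$): each absolute class on $M$ has a unique representative in $\mathcal{H}^\pm_{X_M,N}(M)$ and each relative class a unique representative in $\mathcal{H}^\pm_{X_M,D}(M)$; analogously on $F'$ with Neumann/Dirichlet harmonic fields. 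By definition, $\mathcal{IH}^\pm_{X_M,N}$ and $\mathcal{BH}^\pm_{X_M,D}$ are precisely the harmonic field realizations of $IH^\pm_{X_M}(M)$ and $BH^\pm_{X_M}(M,\partial M)$ (this is the content of the identifications~(\ref{eq:interior-boundary for X_M cohomology}) together with Theorem~\ref{thm:X_M-Hodge}), and similarly on $F'$. This yields the Neumann-interior and Dirichlet-boundary isomorphisms of the theorem.

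The two remaining isomorphisms for $\mathcal{IH}^\pm_{X_M,D}$ and $\mathcal{BH}^\pm_{X_M,N}$ follow by applying the Hodge star. Indeed, the Hodge star on $M$ identifies $\mathcal{IH}^{n-\pm}_{X_M,N}(M)\cong \mathcal{IH}^\pm_{X_M,D}(M)$ and $\mathcal{BH}^{n-\pm}_{X_M,D}(M)\cong\mathcal{BH}^\pm_{X_M,N}(M)$ (as noted in the text just before the statement), and the analogous statement holds on $F'$; combining with the previous step, composing the three arrows $\star_M^{-1}$, the already proven isomorphism, and $\star_{F'}$ gives the desired identifications. The one point requiring care, and the main technical obstacle, is the naturality square involving $\d_{X_M}$, i.e.\ verifying that pullback to the fixed-zero set commutes with the connecting homomorphism; this is a diagram chase using that pullback preserves both $\d$ and $\iota_{X_M}$ (the latter because $X_M$ restricted to $F'$ vanishes, so $j_X^*\iota_{X_M}\omega=\iota_{X_M|_{F'}}j_X^*\omega=0$, making the connecting map on $F'$ simplify accordingly), rather than any hard analytic estimate.
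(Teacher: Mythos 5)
Your proposal is correct and follows essentially the same route as the paper's own proof: a commutative ladder between the long exact sequence of $X_M$-cohomology for $(M,\partial M)$ and the de Rham long exact sequence for $(F',\partial F')$ via $j_X^*$ (using Corollary~\ref{coroll:Witten-fixed isomorphism} on $M$, $\partial M$, and the relative pair), followed by the Hodge-theoretic realization of the interior/boundary subspaces and the Hodge star for the remaining two isomorphisms. Your explicit verification of the naturality square for the connecting homomorphism is a detail the paper leaves implicit, but it is the same argument.
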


\begin{proof}
We prove the first two; the other two follow by applying the Hodge star operator (on $M$ and on $F'$).  Denote by $j_X$ the inclusion of the pair, $j_X:(F',\partial F') \hookrightarrow (M,\partial M)$. Then $j_X$ induces a chain map between the long exact sequences of $X_M$ cohomology on $M$ and de Rham cohomology on $F'$, which by corollary \ref{coroll:Witten-fixed isomorphism} is an isomorphism.

Since the interior part of the absolute cohomology and the boundary part of the relative cohomology are defined from these long exact sequences, it follows that $j_X$ induces isomorphisms
$$IH_{X_M}(M)^\pm \cong IH^\pm(F'),\quad \mbox{and} \quad BH_{X_M}^\pm(M,\partial M) \cong BH^\pm(F',\partial F').$$
It then follows from the $X_M$-Hodge theorem \ref{thm:X_M-Hodge} that there are isomorphisms
$\mathcal{IH}_{X_M,N}(M) \cong \mathcal{IH}_N^\pm(F')$ and 
$ \mathcal{BH}_{X_M,D}^\pm(M) \cong \mathcal{BH}_D^\pm(F').$
\end{proof}

The analogue of Gluck and DeTurck's theorem for the Poincar\'e
duality angles (theorem \,\ref{De-Glu duality angles}) also holds.
The $X_M$-Poincar\'e duality angles are defined in the obvious way,
as the principal angles between $\mathcal{IH}^\pm_{X_M,D}$ and
$\mathcal{IH}^\pm_{X_M,N}$.

\begin{proposition}\label{prop. our duality angles}
The $X_M$-Poincar\'{e} duality angles are all acute.
\end{proposition}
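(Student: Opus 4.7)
The plan is to mimic the DeTurck--Gluck strategy for Theorem~\ref{De-Glu duality angles}, using the ingredients we already have in the $X_M$-setting. Recall that the principal angles between two finite-dimensional subspaces $V,W$ of a Hilbert space are all acute precisely when (i) $V\cap W = \{0\}$ (no angle equals $0$) and (ii) no non-zero vector in $V$ is orthogonal to all of $W$, and likewise with the roles of $V$ and $W$ reversed (no angle equals $\pi/2$). I will verify (i) and (ii) for $V=\mathcal{IH}^\pm_{X_M,D}$ and $W=\mathcal{IH}^\pm_{X_M,N}$.

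For (i): Since $\mathcal{IH}^\pm_{X_M,D}\subset \mathcal{H}^\pm_{X_M,D}(M)$ and $\mathcal{IH}^\pm_{X_M,N}\subset \mathcal{H}^\pm_{X_M,N}(M)$, their intersection lies in $\mathcal{H}^\pm_{X_M,D}(M)\cap \mathcal{H}^\pm_{X_M,N}(M)$, which is zero by the result of \cite{Q&J} cited at the beginning of Section~\ref{sec.style of DeTurck-Gluck}. Hence no principal angle is $0$.

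For (ii): Suppose $\omega \in \mathcal{IH}^\pm_{X_M,D}$ is orthogonal to every element of $\mathcal{IH}^\pm_{X_M,N}$. Because $\omega$ itself lies in $\mathcal{H}^\pm_{X_M,D}(M)$, Theorem~\ref{thm. largest orthogonal } says $\omega \perp \mathcal{BH}^\pm_{X_M,N}(M)$ automatically (the boundary piece on the Neumann side is orthogonal to the whole Dirichlet space). Combined with the orthogonal decomposition $\mathcal{H}^\pm_{X_M,N}(M) = \mathcal{IH}^\pm_{X_M,N} \oplus \mathcal{BH}^\pm_{X_M,N}$ from Theorem~\ref{thm.De- Gluck and ours decom.}, this yields $\omega \perp \mathcal{H}^\pm_{X_M,N}(M)$. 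But by the maximality clause of Theorem~\ref{thm. largest orthogonal }, any element of $\mathcal{H}^\pm_{X_M,D}(M)$ orthogonal to all of $\mathcal{H}^\pm_{X_M,N}(M)$ must lie in $\mathcal{BH}^\pm_{X_M,D}(M)$. Since the decomposition $\mathcal{H}^\pm_{X_M,D}(M) = \mathcal{BH}^\pm_{X_M,D} \oplus \mathcal{IH}^\pm_{X_M,D}$ is orthogonal, we get $\omega \in \mathcal{BH}^\pm_{X_M,D}\cap\mathcal{IH}^\pm_{X_M,D} = \{0\}$. The symmetric argument (swap Dirichlet and Neumann, using Hodge star to transport between $\pm$ and $n-\pm$) handles the other direction. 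Thus no angle equals $\pi/2$, and combined with (i) every $X_M$-Poincar\'e duality angle is acute.

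The only step that is non-routine is the invocation of $\mathcal{H}^\pm_{X_M,D}\cap\mathcal{H}^\pm_{X_M,N}=\{0\}$, since its proof is deferred to \cite{Q&J}; everything else is a formal consequence of the $X_M$-Friedrichs decomposition and the maximality characterization of the boundary subspaces established in Theorem~\ref{thm. largest orthogonal }.
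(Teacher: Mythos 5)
Your proposal is correct and follows essentially the same route as the paper: the paper's own proof likewise rules out an angle of $0$ via $\mathcal{H}^{\pm}_{X_M,N}(M)\cap\mathcal{H}^{\pm}_{X_M,D}(M)=\{0\}$ from \cite{Q&J} and rules out $\pi/2$ via Theorem~\ref{thm. largest orthogonal }. You have merely spelled out the deduction (using the orthogonal decompositions of Theorem~\ref{thm.De- Gluck and ours decom.}) that the paper leaves implicit.
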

\begin{proof}
These angles can be neither 0 nor $\pi/2$, firstly because
$\mathcal{H}^{\pm}_{X_{M},N}(M)\cap\mathcal{H}^{\pm}_{X_{M},D}(M)=\{0\}$ (shown
in \cite{Q&J}), and secondly because of theorem \ref{thm. largest orthogonal }. Hence they
must all be acute.
\end{proof}

The results above and  in \cite{Q&J} would allow us to extend most of the results of \cite{Clay1} to the context of $X_M$-cohomology and $X_M$-Poincar\'{e} duality angles but we leave this for future work.

\vskip 1cm

\section{Conclusions}
\label{sec:conclusions}

In previous sections, we began with the action of a torus $G$; here
we state results for a given Killing vector field $K$ on a compact
Riemannian manifold $M$ (with or without boundary), more in keeping
with Witten's original work \cite{Witten}.  Recall that the group
$\mathrm{Isom}(M)$ of isometries of $M$ is a compact Lie group, and
the smallest closed subgroup $G(K)$ containing $K$ in its Lie
algebra is Abelian, so a torus. Furthermore, the submanifold $N(K)$
of zeros of $K$ coincides with $\mathrm{Fix}(G(K),M).$

The equivariant cohomology constructions of Section
\ref{sec:equivariant cohomology} give us the proof of the following
result, which extends the theorem of Witten (our Theorem
\ref{thm:fixed point set}) to manifolds with boundary.
\begin{theorem}\label{Witten-De rham}
Let $K$ be a Killing vector field on the compact Riemannian manifold
$M$ (with or without boundary), and let $N(K)$ be the submanifold of
zeros of $K$. Then pullback to $N$ induces isomorphisms
$$H_{K}^\pm(M) \cong H^\pm(N(K)),
\quad\mbox{and}\quad H_{K}^\pm(M,\,\partial M) \cong
H^\pm(N(K),\,\partial N(K)).
$$
\end{theorem}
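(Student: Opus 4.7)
The proof will be almost entirely an identification argument: reduce the statement about the Killing vector field $K$ to a special case of Corollary \ref{coroll:Witten-fixed isomorphism}, which has already done the real work through the localization theorem. The bridge is the torus $G(K)$ generated by the flow of $K$.

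First I would introduce $G := G(K)$, the closure in $\mathrm{Isom}(M)$ of the one-parameter subgroup $\{\exp(tK)\}_{t\in\RR}$. Since $\mathrm{Isom}(M)$ is a compact Lie group and the closure of an abelian subgroup is abelian, $G$ is a compact connected abelian Lie group, that is, a torus. Let $\gg$ be its Lie algebra and let $X\in\gg$ be the element with $X_M = K$. The torus $G$ acts by isometries on $M$, placing us inside the framework of the earlier sections.

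Next I would identify the relevant cohomology groups. A form $\omega$ satisfies $\mathcal{L}_K\omega = 0$ if and only if it is invariant under the one-parameter subgroup generated by $K$; since this subgroup is dense in $G$ and the action on $\Omega(M)$ is continuous, such an $\omega$ is $G$-invariant. Hence $\Omega_K(M) = \Omega_G(M)$, and consequently $H_K^\pm(M) = H_{X_M}^\pm(M)$ and $H_K^\pm(M,\partial M) = H_{X_M}^\pm(M,\partial M)$ by the very definitions given in Sections \ref{sec:W-H no boundary} and \ref{sec:W-H with boundary}. Similarly, $x\in N(K)$ iff $x$ is fixed by the flow of $K$ iff $x$ is fixed by the closure $G$ of that flow, so $N(K) = N(X_M) = \mathrm{Fix}(G,M) =: F$.

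The remaining point is to ensure we are in the first (cleaner) case of Corollary \ref{coroll:Witten-fixed isomorphism}, namely that $X\notin Z$. Suppose for contradiction that $X\in\mathfrak{k}$ for some proper isotropy subalgebra $\mathfrak{k}=\mathrm{Lie}(\widehat{K})$ with $\widehat{K}\subsetneq G$. Then $\exp(tX)\in\widehat{K}$ for all $t$, so the closure of $\{\exp(tX)\}$ is contained in $\widehat{K}$. But by construction this closure is $G$ itself, contradicting $\widehat{K}\subsetneq G$. Hence $X\notin Z$, and the first case of Corollary \ref{coroll:Witten-fixed isomorphism} applies directly, yielding the two isomorphisms $H_{X_M}^\pm(M)\cong H^\pm(N(X_M))$ and $H_{X_M}^\pm(M,\partial M)\cong H^\pm(N(X_M),\partial N(X_M))$ induced by the inclusion $j:N(X_M)\hookrightarrow M$, which via the identifications above are exactly the claimed isomorphisms.

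The only genuine content beyond bookkeeping is the density argument showing $X\notin Z$; everything else is a matter of unwinding definitions and invoking the already-established equivariant localization result. Because the corollary handles the relative and absolute cases uniformly, there is no separate work for manifolds with boundary beyond what has already been done in Section \ref{sec:equivariant cohomology}.
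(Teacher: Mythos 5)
Your proposal is correct and follows exactly the paper's route: the paper's entire proof is ``Apply Corollary \ref{coroll:Witten-fixed isomorphism} to the equivariant cohomology for the action of the torus $G(K)$,'' and you have simply supplied the implicit bookkeeping (that $\Omega_K=\Omega_{G(K)}$, that $N(K)=\mathrm{Fix}(G(K),M)$, and the pleasant extra observation that $X\notin Z$ for this choice of torus, so only the clean first case of the corollary is ever needed). No gaps.
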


\begin{proof}
Apply Corollary \ref{coroll:Witten-fixed isomorphism} to the equivariant cohomology for the action of the torus $G(K)$.
\end{proof}

Furthermore, using the Hodge star operator, the Poincar\'e-Lefschetz duality of Theorem \ref{thm:X_M-Hodge}(c) corresponds under the isomorphisms in the theorem above, to Poincar\'e-Lefschetz duality on the fixed point space.

Translating this theorem into the language of harmonic fields, shows
\begin{equation}\label{eq. witten exten.on X_M-harm.}
    \mathcal{H}^\pm_{K,N}(M)\cong \mathcal{H}^\pm_{N}(N(K))
\quad\mbox{and}\quad \mathcal{H}^\pm_{K,D}(M)\cong
\mathcal{H}^\pm_{D}(N(K)).
\end{equation}
where $\mathcal{H}^\pm_{N}(N(K))$ and $\mathcal{H}^\pm_{D}(N(K)) $ are the ordinary Neumann and Dirichlet harmonic fields on $N(K)$ respectively. 
The fact that theorem \ref{Witten-De rham} and eq. (\ref{eq. witten
exten.on X_M-harm.}) can be refined to the style of
theorem~\ref{thm:refine localization} which gives a more precise
meaning for these isomorphisms.

\begin{corollary}\label{coro.Witt. Diri-Neum}
Given any harmonic field on $N(K)$ with either Dirichlet or Neumann
boundary conditions, there is a unique $K$-harmonic field on $M$
with the corresponding boundary conditions whose restriction on $N(K)$ is cohomologous to the given field.
\end{corollary}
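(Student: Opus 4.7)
My plan is to chain Theorem \ref{Witten-De rham} with the $X_M$-Hodge isomorphism of Corollary \ref{coro.Hodge is often quoted}, both already available. Since the Hodge star identifies the Dirichlet and Neumann pictures both on $M$ and on $N(K)$, it suffices to treat the Dirichlet case; the Neumann version will then follow either by the parallel argument or directly by applying $\star$.

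Starting with $\omega\in\mathcal{H}^\pm_{D}(N(K))$, the classical Hodge-Morrey-Friedrichs theorem on $N(K)$ assigns to $\omega$ a unique relative de~Rham class $[\omega]\in H^\pm(N(K),\partial N(K))$. I would then invert the pullback isomorphism
$$j^{*}:H^\pm_K(M,\partial M)\stackrel{\sim}{\longrightarrow} H^\pm(N(K),\partial N(K))$$
supplied by Theorem \ref{Witten-De rham} to obtain the unique relative $K$-cohomology class $[\alpha]_K\in H^\pm_K(M,\partial M)$ with $j^{*}[\alpha]_K=[\omega]$, and then apply Corollary \ref{coro.Hodge is often quoted} to realise $[\alpha]_K$ by its unique Dirichlet $K$-harmonic representative $\tilde\omega\in\mathcal{H}^\pm_{K,D}(M)$. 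This $\tilde\omega$ is the candidate extension.

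The remaining verification is that $j^{*}\tilde\omega$ really is cohomologous to $\omega$ on $N(K)$. It is $\d$-closed because $\d(j^{*}\tilde\omega)=j^{*}(\d_K\tilde\omega)=0$, using $K|_{N(K)}=0$ and $\d_K\tilde\omega=0$. For the Dirichlet condition on $\partial N(K)=N(K)\cap\partial M$, let $i_N:\partial N(K)\hookrightarrow N(K)$ and $j_\partial:\partial N(K)\hookrightarrow\partial M$ denote the remaining inclusions; commutativity of the square gives $i_N^{*}\circ j^{*}=j_\partial^{*}\circ i^{*}$, so $i^{*}\tilde\omega=0$ forces $i_N^{*}(j^{*}\tilde\omega)=0$. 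Since by construction $j^{*}[\tilde\omega]_K=[\omega]$ in relative cohomology on $N(K)$, we conclude $j^{*}\tilde\omega\sim\omega$, as required.

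Uniqueness then comes for free: any other Dirichlet $K$-harmonic field $\tilde\omega'$ whose restriction to $N(K)$ is cohomologous to $\omega$ satisfies $j^{*}[\tilde\omega']_K=[\omega]=j^{*}[\tilde\omega]_K$, so injectivity of $j^{*}$ in Theorem \ref{Witten-De rham} gives $[\tilde\omega']_K=[\tilde\omega]_K$, and uniqueness of the $K$-harmonic representative in Corollary \ref{coro.Hodge is often quoted} then forces $\tilde\omega'=\tilde\omega$. I do not anticipate a real obstacle; the only slightly non-automatic check is the transfer of the boundary condition under restriction, and that reduces to the evident commutativity $j\circ i_N = i\circ j_\partial$ of the inclusions.
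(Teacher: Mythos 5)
Your argument is correct and follows essentially the route the paper intends: the corollary is obtained by chaining the classical Hodge isomorphism on $N(K)$, the pullback isomorphism of Theorem \ref{Witten-De rham}, and the $X_M$-Hodge isomorphism of Corollary \ref{coro.Hodge is often quoted} (equivalently, eq.~(\ref{eq. witten exten.on X_M-harm.})), with the Hodge star handling the Neumann case. Your explicit verification that the restriction is closed, satisfies the Dirichlet condition, and represents the correct relative class simply fills in details the paper leaves implicit.
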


Note that if $\partial N(K)=\emptyset$ then the boundary condition
on $N(K)$ is non-existent, and so every harmonic form (= field) on
$N(K)$ has corresponding to it both a unique Dirichlet and a unique
Neumann $K$-harmonic field on $M$.  Moreover, since in this case
there is no boundary part of the cohomology of $N(K)$, it follows
from theorem~\ref{thm:refine localization} that
$\mathcal{BH}_{X_M,N}=\mathcal{BH}_{X_M,D}=0$.

In other words, it means that all the de Rham cohomology of $N(K)$ must come only from the interior portion, i.e. $H^{\pm}(N(K))=H^{\pm}(N(K),\partial N(K))$,
which shows that every interior de Rham cohomology class has
corresponding to it both a unique relative and a unique absolute
$K$-cohomology class on $M$.

As an application, we have the fact that theorem \ref{Witten-De
rham} and corollary \ref{coro.Witt. Diri-Neum} can be used to extend
the other results of Witten in \cite{Witten} and we hope that this
extension will be useful in quantum field theory and other
mathematical and physical applications when $\partial
M\neq\emptyset$.

\paragraph{Euler characteristics} As is well known, given a complex of $\RR[s]$ (or $\mathbb{C}[s]$) modules whose cohomology is finitely generated, the Euler characteristic of the complex is independent of $s$. This remains true for a $\ZZ_2$-graded complex, for the same reasons (briefly, the cohomology is the direct sum of a torsion module and a free module, and the torsion cancels in the Euler characteristic).

Applying this to the complexes for $X_M$-cohomology, with $X_M=sK$,
it follows that $\chi(M)=\chi(N)$ and $\chi(M,\partial M) =
\chi(N,\partial N)$ (where $N=N(K)$), and furthermore applying the
same arguments to the manifold $\partial M$, one has  $\chi(\partial
M)=\chi(\partial N)$, i.e.
$$\chi(M)=\chi(\partial M)+\chi(M,\partial M)= \chi(\partial
N)+\chi(N,\partial N)=\chi(N).$$

\paragraph{Other Applications:}
We have shown that the Witten-Hodge theory can shed light to give
additional equivariant geometric and topological insight. In addition, the fact that we can use the new decompositions
of $L^2\Omega^{\pm}_G(M)$ given in
theorem~\ref{thm:Hodge-Morrey} and corollary \ref{co.2.6} and also
the relation between the $X_M$-cohomology and $X_M$-harmonic fields
(theorem \ref{thm:X_M-Hodge}) as powerful tools (under topological
aspects) in the theory of differential equations on
$L^2\Omega^{\pm}_G(M)$ to obtain the solubility of various
\textsc{bvp}s. In particular, we can extend most of the results of
chapter three of \cite{Schwarz} on $L^2\Omega^{\pm}_G(M)$ to the
context of the operators $\d_{X_M},$ $\delta_{X_M}$ and
$\Delta_{X_M}$. Moreover, the classical Hodge theory plays a
fundamental role in incompressible hydrodynamics and it has
applications to many other area of mathematical physics and
engineering \cite{Marsden}. So, following these, we hope that the
Witten-Hodge theory will be using as tools in these applications as
well.

\paragraph{Geometric question:} Finally, we proved that 
 $\mathcal{IH}^\pm_{X_{M},N}(M)\cong\mathcal{IH}^\pm_{N}(N(X_M))$ and  $\mathcal{IH}^\pm_{X_{M},D}(M)\cong \mathcal{IH}^\pm_{D}(N(X_M))$ 
and that the principal angles between the corresponding interior subspaces are all acute.
Hence, it would be interesting to answer the following

\medskip

\noindent\emph{How do the $X_M$-Poincar\'{e} duality angles between the interior subspaces
$\mathcal{IH}^\pm_{X_{M},N}(M)$ and $\mathcal{IH}^\pm_{X_{M},D}(M)$ depend on $X$, and how do they compare to the Poincar\'{e} duality angles between the interior subspaces
$\mathcal{IH}^\pm_{N}(N(X_M))$ and $\mathcal{IH}^\pm_{D}(N(X_M))$.}

\end{document}